\newtheorem{thm}{Theorem}\numberwithin{thm}{section}
\newtheorem{lem}[thm]{Lemma}
\newtheorem{prop}[thm]{Proposition}
\newtheorem{cor}[thm]{Corollary}
\newtheorem{exam}[thm]{Example}
\newtheorem{rema}[thm]{Remark}
\newtheorem{defi}[thm]{Definition}
\newtheorem*{thm2}{Theorem}
\newtheorem*{prop2}{Proposition}
\begin{document}
\begin{center}
\huge{Absolutely split locally free sheaves on proper $k$-schemes and Brauer--Severi varieties}\\[1cm]
\end{center}
\begin{center}

\large{Sa$\mathrm{\check{s}}$a Novakovi$\mathrm{\acute{c}}$}\\[0,5cm]
{\small April 2018}\\[0,5cm]
\end{center}
{\small \textbf{Abstract}. 
We classify absolutely split vector bundles on proper $k$-schemes. More precise, we prove that the closed points of the Picard scheme are in one-to-one correspondence with indecomposable absolutely split vector bundles. Furthermore, we apply the obtained results to study the geometry of (generalized) Brauer--Severi varieties.
\begin{center}
\tableofcontents
\end{center}
\section{Introduction}
The goal of this paper is to classify absolutely split vector bundles on arbitrary proper $k$-schemes $X$. An \emph{absolutely split} vector bundle on a proper $k$-scheme $X$ is a locally free sheaf of finite rank that splits as a direct sum of invertible sheaves after base change to the algebraic closure $\bar{k}$. For simplicity, we call such sheaves $AS$-bundles. It is a classical theorem of Grothendieck [16] that the $AS$-bundles on $\mathbb{P}^1$ are just the locally free sheaves. Biswas and Nagaraj [7], [8], [9] and the author [27], [28] classified $AS$-bundles on certain Brauer--Severi varieties. To be precise, $AS$-bundles were completely classified in dimension one and for Brauer--Severi varieties over $\mathbb{R}$. Since Brauer--Severi varieties are twisted forms of projective spaces, the results in \emph{loc.cit.} are  generalizations of Grothendieck's theorem for $\mathbb{P}^1$. In order to study $AS$-bundles on arbitrary Brauer--Severi varieties $X$, we consider the general case where $X$ is an arbitrary proper $k$-scheme. Our first main result is the following:
\begin{thm2}[Theorem 4.6]
Let $X$ be a proper $k$-scheme with $H^0(X,\mathcal{O}_X)=k$. Then the closed points of the Picard scheme $\mathrm{Pic}_{X/k}$ are in one-to-one correspondence with isomorphism classes of indecomposable $AS$-bundles on $X$.
\end{thm2}
Notice that Theorem 4.6 is a direct generalization of the fact that the twisted Picard group equals the set of $k$-rational points in the Picard scheme (see \cite{KO}, (7.4)). Since $AS$-bundles are given by the direct sum of indecomposable $AS$-bundles, we have a one-to-one correspondence of finite closed subschemes of $\mathrm{Pic}_{X/k}$ and isomorphism classes of $AS$-bundles. In the case where the proper scheme $X\otimes_k \bar{k}$ has infinite cyclic Picard group, we obtain:
\begin{thm2}[Theorem 5.1]
Let $X$ be a proper $k$-scheme with $\mathrm{Pic}(X\otimes_k\bar{k})\simeq \mathbb{Z}$ and period $r$ and let $\mathcal{J}$ be a generator of $\mathrm{Pic}(X)$. Denote by $\mathcal{L}$ the generator of $\mathrm{Pic}(X\otimes_k\bar{k})\simeq \mathbb{Z}$ satisfying $\mathcal{J}\otimes_k \bar{k}\simeq \mathcal{L}^{\otimes r}$. Assume there is an indecomposable pure bundle $\mathcal{M}_{\mathcal{L}}$ of type $\mathcal{L}$. Then all indecomposable $AS$-bundles $\mathcal{E}$ are of the form
\begin{eqnarray*}
\mathcal{J}^{\otimes a}\otimes \mathcal{M}_{\mathcal{L}^{\otimes j}}
\end{eqnarray*}
with unique $a\in\mathbb{Z}$ and $0\leq j\leq r-1$.
\end{thm2}
See Remark 5.2 for an argument why the indecomposable $AS$-bundles in Theorem 5.1 do not depend on the choice of the generator of $\mathrm{Pic}(X)$. Notice that for coherent sheaves on proper $k$-schemes the Krull--Schmidt Theorem holds (see [5]). Therefore, any coherent sheaf can uniquely  be decomposed as a direct sum of indecomposables (up to isomorphism and permutation). It is easy to see that all $AS$-bundles on proper $k$-schemes are obtained as a direct sum of the indecomposable $AS$-bundles.

For a more deeper understanding of the $AS$-bundles it is important to determine the ranks of $\mathcal{M}_{\mathcal{L}^{\otimes j}}$. If $H^0(X,\mathcal{O}_X)=k$, an easy computation shows that the endomorphism algebra $\mathrm{End}(\mathcal{M}_{\mathcal{L}})$ is central simple. One therefore has the notion of the index of $\mathrm{End}(\mathcal{M}_{\mathcal{L}})$. Under the assumption that the Picard group is infinite cyclic, we have the following concrete description of the ranks of $\mathcal{M}_{\mathcal{L}^{\otimes j}}$. 
\begin{prop2}[Proposition 5.3]
Let $X$ be a proper $k$-scheme with $H^0(X,\mathcal{O}_X)=k$ and $\mathrm{Pic}(X\otimes_k\bar{k})\simeq \mathbb{Z}$. Denote by $\mathcal{L}$ the generator of $\mathrm{Pic}(X\otimes_k\bar{k})$ and assume there is an indecomposable vector bundle $\mathcal{M}_{\mathcal{L}}$ satisfying $\mathcal{M}_{\mathcal{L}}\otimes_k \bar{k}\simeq \mathcal{L}^{\oplus d}$. Then for all $j\in\mathbb{Z}$ one has
\begin{eqnarray*}
\mathrm{rk}(\mathcal{M}_{\mathcal{L}^{\otimes j}})=\mathrm{ind}(\mathrm{End}(\mathcal{M}_{\mathcal{L}})^{\otimes j}).
\end{eqnarray*}
\end{prop2}
Applying the above results to the case where $X$ is a (generalized) Brauer--Severi variety we obtain a complete classification of $AS$-bundles on arbitrary (generalized) Brauer--Severi varieties (see Theorem 6.5 and Corollary 6.6). This gives the results of Biswas and Nagaraj [7], [8], [9] and of the author [27], [28] as corollaries. We also consider the sequence of natural numbers $(d_j)_{j\in\mathbb{Z}}$ with $d_j=\mathrm{rk}(\mathcal{M}_{\mathcal{L}^{\otimes j}})$. In fact, for a Brauer--Severi variety of period $p$ it is enough to consider the $p+1$-tupel $(d_0,d_1,...,d_{p-1},d_p)$ (see Proposition 5.4). This $p+1$-tupel will be called the \emph{AS-type}. We determine the $AS$-type and study the relation between the $AS$-types of Brauer equivalent and birational Brauer--Severi varieties. Moreover, we will show that the $AS$-type is a birational invariant (see Proposition 6.12). Finally, as consequences of the Horrocks criterion and a result of Ottaviani [30], we obtain cohomological criteria for a vector bundle on a (generalized) Brauer--Severi variety to be a $AS$-bundle (see Theorem 6.14 and Theorem 7.7).\\

{\small \textbf{Acknowledgement}. This paper is an improvement of the first two chapters of my Ph.D. thesis which was supervised by Stefan Schr\"oer whom I would like to thank for a lot of comments and fruitful discussions.\\

{\small \textbf{Conventions}. Throughout this work $k$ denotes an arbitrary ground field and $k^{sep}$ and $\bar{k}$ a separable respectively algebraic closure. Furthermore, any locally free sheaf is assumed to be of finite rank and will be called vector bundle.

\section{Generalities on Brauer--Severi varieties and simple algebras}
Throughout the paper $k$ denotes an arbitrary field, unless stated otherwise. We recall the basics of Brauer--Severi varieties and central simple algebras and refer to [4], [15], [33] and [34] for details. For the more general notions of Brauer--Severi schemes and Azumaya algebras we refer the reader to [17] and [18]. A \emph{Brauer--Severi variety} of dimension $n$ is a scheme $X$ of finite type over $k$ such that $X\otimes_k L\simeq \mathbb{P}^n$ for a finite field extension $k\subset L$. Such a field extension $k\subset L$ is called \emph{splitting field} of $X$. Clearly, the algebraic closure $\bar{k}$ is a splitting field for any Brauer--Severi variety. One can show that a Brauer--Severi variety always splits over a finite separable field extension of $k$ (see [15], Corollary 5.1.4). By embedding the finite separable splitting field into its Galois closure, a Brauer--Severi variety splits over a finite Galois extension of the base field $k$ (see [15], Corollary 5.1.5). It follows from [19], IV, Chapter II, Theorem 2.7.1 that $X$ is projective, integral and smooth over $k$. There is a well-known one-to-one correspondence between Brauer--Severi varieties and central simple $k$-algebras. Recall that an associative $k$-algebra $A$ is called \emph{central simple} if it is an associative finite-dimensional $k$-algebra that has no two-sided ideals other than $0$ and $A$ and if its center equals $k$. If the algebra $A$ is a division algebra, it is called \emph{central division algebra}. Such central simple $k$-algebras can be characterized by the following well-known fact (see [15], Theorem 2.2.1): $A$ is a central simple $k$-algebra if and only if there is a finite field extension $k\subset L$ such that $A\otimes_k L \simeq M_n(L)$ if and only if $A\otimes_k \bar{k}\simeq M_n(\bar{k})$.

The \emph{degree} of a central simple algebra $A$ is now defined to be $\mathrm{deg}(A):=\sqrt{\mathrm{dim}_k A}$. According to the Wedderburn Theorem, for any central simple $k$-algebra $A$ there is an integer $n>0$ and a division algebra $D$ such that $A\simeq M_n(D)$. The division algebra $D$ is also central and unique up to isomorphism. Now the degree of the unique central division algebra $D$ is called the \emph{index} of $A$ and is denoted by $\mathrm{ind}(A)$. It can be shown that the index is the smallest among the degrees of finite separable splitting fields of $A$ (see [15], Corollary 4.5.9). Two central simple $k$-algebras $A\simeq M_n(D)$ and $B\simeq M_m(D')$ are called \emph{Brauer equivalent} if $D\simeq D'$. Brauer equivalence is indeed an equivalence relation and one defines the \emph{Brauer group} $\mathrm{Br}(k)$ of a field $k$ as the group whose elements are equivalence classes of central simple $k$-algebras and group operation being the tensor product. It is an abelian group with inverse of the equivalence class of $A$ given by the equivalence class of $A^{op}$. The neutral element is the equivalence class of $k$. The order of a central simple $k$-algebra $A$ in $\mathrm{Br}(k)$ is called the \emph{period} of $A$ and is denoted by $\mathrm{per}(A)$. It can be shown that the period divides the index and that both, period and index, have the same prime factors (see [15], Proposition 4.5.13). Denoting by $\mathrm{BS}_n(k)$ the set of all isomorphism classes of Brauer--Severi varieties of dimension $n$ and by $\mathrm{CSA}_{n+1}(k)$ the set of all isomorphism classes of central simple $k$-algebras of degree $n+1$, there is a canonical identification 
\begin{center}
$\mathrm{CSA}_{n+1}(k)=\mathrm{BS}_n(k)$ 
\end{center}
via non-commutative Galois cohomology (see [4], [15] and [35] for details). Hence any $n$-dimensional Brauer--Severi variety $X$ corresponds to a central simple $k$-algebra of degree $n+1$. In view of the one-to-one correspondence between Brauer--Severi varieties and central simple algebras one can also speak about the period of a Brauer--severi variety $X$. It is defined to be the period of the corresponding central simple $k$-algebra $A$.\\ 

We also need some facts concerning simple and semisimple rings in general (see [2], [11] for details). Recall that  for a ring $R$ with unity, a $R$-module $M$ is called \emph{simple} if $M$ has no non-trivial submodules. A $R$-module $M$ is called \emph{semisimple} if $M$ is isomorphic to the direct sum of simple modules (see [2], Chapter 3, \text{\S} 9). Note that a ring $R$ is called simple (semisimple) if it is a simple (semisimple) left module over itself. Clearly, a simple ring $R$ is of course semisimple. The \emph{Jacobson radical} of a ring $R$ is by definition the intersection of all maximal left ideals in $R$ and is denoted by $\mathrm{rad}(R)$. For semisimple rings one has the following well-known characterization: Assume $R$ is left artinian, then $R$ is semisimple if and only if $\mathrm{rad}(R)=0$ (see [2], Proposition 15.16). In particular $R/\mathrm{rad}(R)$ is semisimple. Since central simple $k$-algebras $A$ are isomorphic to $M_n(D)$, they are simple in the above sense (see [2], \text{\S} 13, Example 13.1) and one therefore has $\mathrm{rad}(A)=0$. This fact is needed in the next section (see Proposition 3.2).

\section{Pure vector bundles}
In this section we study a certain class of vector bundles. We follow the work of Arason, Elman and Jacob [3] and call them \emph{pure}. 

\begin{defi}
\textnormal{A vector bundle $\mathcal{E}$ on a proper $k$-scheme $X$ is called \emph{pure of type} $\mathcal{W}$ if there is an indecomposable vector bundle $\mathcal{W}$ on $X\otimes_k \bar{k}$ such that $\mathcal{E}\otimes_k {\bar{k}}\simeq \mathcal{W}^{\oplus m}$. If the indecomposable vector bundle $\mathcal{W}$ is invertible, we say $\mathcal{E}$ is \emph{pure of rank one}.} 
\end{defi}
\begin{exam}
\textnormal{Let $X$ be a $n$-dimensional Brauer--Severi variety over $k$. We consider the Euler sequence on $X\otimes_k\bar{k}\simeq\mathbb{P}^n$ (see [21], II Theorem 8.13):}
\begin{eqnarray}
0\longrightarrow \Omega^1_{\mathbb{P}^{n}}\longrightarrow \mathcal{O}_{\mathbb{P}^n}(-1)^{\oplus (n+1)}\longrightarrow \mathcal{O}_{\mathbb{P}^{n}}\longrightarrow 0.
\end{eqnarray}
\textnormal{This short exact sequence does not split since $\mathcal{O}_{\mathbb{P}^n}(-1)$ has no global sections. Applying $\mathrm{Hom}(\mathcal{O}_{\mathbb{P}^n},-)$ to this sequence gives $\mathrm{Ext}^1(\mathcal{O}_{\mathbb{P}^n},\Omega^1_{\mathbb{P}^n})\simeq \bar{k}$. Therefore, the middle term of the Euler sequence is unique up to isomorphism. Furthermore, since $\mathcal{O}_X$ and $\Omega^1_{X/k}$ exist on $X$ and $\mathrm{Ext}^1(\mathcal{O}_X,\Omega^1_{X/k})\simeq k$, there is also a non-split short exact sequence on $X$} 
\begin{eqnarray}
0\longrightarrow \Omega^1_{X/k}\longrightarrow \mathcal{V}\longrightarrow \mathcal{O}_X\longrightarrow 0,
\end{eqnarray}
\textnormal{where the vector bundle $\mathcal{V}$ is unique up to isomorphism. After base change to $\bar{k}$ one gets back the sequence (1) on $\mathbb{P}^{n}$. Therefore $\mathcal{V}\otimes_k\bar{k}\simeq \mathcal{O}_{\mathbb{P}^n}(-1)^{\oplus (n+1)}$. This shows that $\mathcal{V}$ is pure of type $\mathcal{O}_{\mathbb{P}^n}(-1)$.}
\end{exam}

The vector bundle $\mathcal{V}$ from sequence (2) has an interesting property. Consider the endomorphism algebra $\mathrm{End}(\mathcal{V})$ which is a finite-dimensional associative $k$-algebra. After base change to $\bar{k}$, we find $\mathrm{End}(\mathcal{V})\otimes_k\bar{k}\simeq \mathrm{End}(\mathcal{O}_{\mathbb{P}^n}(-1)^{\oplus (n+1)})\simeq \mathrm{End}(\mathcal{O}_{\mathbb{P}^n}^{\oplus (n+1)})\simeq M_{n+1}(\bar{k})$. We see that $\mathrm{End}(\mathcal{V})$ is a central simple $k$-algebra. More generally, we make the following observation.
\begin{prop}
Let $X$ be a geometrically integral and proper $k$-scheme and $\mathcal{E}$ pure of rank one. Then $\mathrm{End}(\mathcal{E})$ is a central simple $k$-algebra. If furthermore $\mathcal{E}$ is indecomposable, $\mathrm{End}(\mathcal{E})$ is a division algebra. 
\end{prop}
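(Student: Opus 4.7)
The plan is to reduce everything to the characterization of central simple algebras recalled in Section~2, namely that a finite-dimensional associative $k$-algebra $A$ is central simple iff $A\otimes_k\bar k\simeq M_n(\bar k)$ for some $n$.

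First I would compute $\mathrm{End}(\mathcal{E})\otimes_k\bar k$. Since $X$ is proper and $\mathcal{E}$ is locally free of finite rank, $\mathrm{Hom}$ commutes with flat base change, so
\begin{eqnarray*}
\mathrm{End}(\mathcal{E})\otimes_k\bar k \;\simeq\; \mathrm{End}(\mathcal{E}\otimes_k\bar k).
\end{eqnarray*}
By hypothesis $\mathcal{E}\otimes_k\bar k\simeq \mathcal{L}^{\oplus n}$ for some invertible sheaf $\mathcal{L}$ on $X\otimes_k\bar k$, hence
\begin{eqnarray*}
\mathrm{End}(\mathcal{E}\otimes_k\bar k)\;\simeq\; M_n\!\bigl(\mathrm{End}(\mathcal{L})\bigr)\;\simeq\; M_n\!\bigl(H^0(X\otimes_k\bar k,\mathcal{O}_{X\otimes_k\bar k})\bigr).
\end{eqnarray*}
Because $X$ is geometrically integral and proper, $X\otimes_k\bar k$ is integral and proper over $\bar k$, which forces $H^0(X\otimes_k\bar k,\mathcal{O})=\bar k$. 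Combining the identifications gives $\mathrm{End}(\mathcal{E})\otimes_k\bar k\simeq M_n(\bar k)$, and the characterization from Section~2 then yields that $\mathrm{End}(\mathcal{E})$ is a central simple $k$-algebra.

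For the second statement, suppose $\mathcal{E}$ is additionally indecomposable. By the Krull--Schmidt theorem for coherent sheaves on proper $k$-schemes (reference [5]), the endomorphism ring of an indecomposable coherent sheaf is a local ring, so $\mathrm{End}(\mathcal{E})$ is local. On the other hand, by Wedderburn we may write $\mathrm{End}(\mathcal{E})\simeq M_m(D)$ for some central division $k$-algebra $D$. Since $M_m(D)$ contains non-trivial idempotents whenever $m\geq 2$, locality forces $m=1$, i.e.\ $\mathrm{End}(\mathcal{E})\simeq D$ is a division algebra.

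The only real subtlety is the two base-change/geometry facts used above, namely that $\mathrm{Hom}$ of coherent sheaves on a proper scheme commutes with the flat base change $k\to\bar k$, and that geometric integrality plus properness gives $H^0(X\otimes_k\bar k,\mathcal{O})=\bar k$; both are standard and I would just cite them. Once these are in hand, the rest is the formal matrix computation and an application of Krull--Schmidt, with the final step being the elementary observation that a local central simple algebra must be a division algebra.
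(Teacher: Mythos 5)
Your first half is essentially the paper's own argument: both compute $\mathrm{End}(\mathcal{E})\otimes_k\bar k\simeq\mathrm{End}(\mathcal{E}\otimes_k\bar k)\simeq M_n(\bar k)$ by twisting away $\mathcal{L}$ and using $H^0(X\otimes_k\bar k,\mathcal{O})=\bar k$ from geometric integrality, then invoke the characterization of central simple algebras by splitting over $\bar k$. Where you genuinely diverge is the indecomposable case. The paper quotes the criterion of Arason--Elman--Jacob [3]: on a proper $k$-scheme, $\mathcal{E}$ is indecomposable iff $\mathrm{End}(\mathcal{E})/\mathrm{rad}(\mathrm{End}(\mathcal{E}))$ is a division algebra, and then uses $\mathrm{rad}(\mathrm{End}(\mathcal{E}))=0$ for a central simple algebra to conclude. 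You instead use that the endomorphism ring of an indecomposable coherent sheaf on a proper $k$-scheme is local (Atiyah [5]), note that $M_m(D)$ has nontrivial idempotents for $m\geq 2$, and force $m=1$. Both routes are correct and both ultimately rest on the same finiteness of $\mathrm{End}(\mathcal{E})$; yours is slightly more self-contained in that it only needs the locality statement underlying Krull--Schmidt rather than the specific ``$\mathrm{End}/\mathrm{rad}$ is a division algebra'' formulation, at the cost of one extra elementary observation about idempotents. One pedantic caveat: the locality of endomorphism rings of indecomposables is the input to the Krull--Schmidt theorem in [5] rather than a formal consequence of its statement, so you should cite it as Atiyah's result directly rather than as following ``by Krull--Schmidt.''
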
 
\begin{proof}
By definition there exists an invertible sheaf $\mathcal{L}$ on $X\otimes_k\bar{k}$ such that $\mathcal{E}\otimes_k {\bar{k}}\simeq\bigoplus^n_{i=0}\mathcal{L}$. Let us denote $X_{\bar{k}}$ for the scheme $X\otimes_k\bar{k}$. We have $\mathcal{E}\otimes_k\bar{k}\otimes \mathcal{L}^{\vee}\simeq \mathcal{O}_{X_{\bar{k}}}^{\oplus (n+1)}$. Since $X$ is supposed to be proper, the endomorphism algebra $\mathrm{End}(\mathcal{E})$ is a finite-dimensional $k$-algebra. Furthermore, since $X$ is geometrically integer we have $\mathrm{End}(\mathcal{O}_{X_{\bar{k}}})\simeq H^0(X_{\bar{k}},\mathcal{O}_{X_{\bar{k}}})\simeq \bar{k}$ and therefore we obtain
\begin{eqnarray*}
 \mathrm{End}(\mathcal{E})\otimes_k\bar{k} & \simeq & \mathrm{End}(\bigoplus^n_{i=0}\mathcal{L})\\
  & \simeq & \mathrm{Hom}(\mathcal{L}\otimes(\bigoplus^n_{i=0}\mathcal{O}_{X_{\bar{k}}}),\mathcal{L}\otimes(\bigoplus^n_{i=0}\mathcal{O}_{X_{\bar{k}}}) )\\
	& \simeq & \mathrm{Hom}((\bigoplus^n_{i=0}\mathcal{O}_{X_{\bar{k}}}),\mathcal{L}^{\vee}\otimes\mathcal{L}\otimes(\bigoplus^n_{i=0}\mathcal{O}_{X_{\bar{k}}}) )\\
	& \simeq & \mathrm{End}(\mathcal{O}_{X_{\bar{k}}}^{\oplus (n+1)})\\
	& \simeq & M_{n+1}(\bar{k}).
 \end{eqnarray*} 
This implies that $\mathrm{End}(\mathcal{E})$ has to be a central simple $k$-algebra. Now suppose that $\mathcal{E}$ is indecomposable. According to [3], p.1324 on a proper $k$-scheme $X$ a vector bundle $\mathcal{E}$ is indecomposable if and only if $\mathrm{End}(\mathcal{E})/\mathrm{rad}(\mathrm{End}(\mathcal{E}))$ is a division algebra over $k$. Since $\mathrm{End}(\mathcal{E})$ is a central simple $k$-algebra we have $\mathrm{rad}(\mathrm{End}(\mathcal{E}))=0$. Hence $\mathrm{End}(\mathcal{E})$ is a central division algebra.
\end{proof}
Note that the rank of a pure sheaf as in Proposition 3.3 is equal to the degree of the central simple $k$-algebra $\mathrm{End}(\mathcal{E})$. Indeed, the above proof shows 
\begin{eqnarray}
\mathrm{rk}(\mathcal{E})=n+1=\mathrm{deg}(\mathrm{End}(\mathcal{E})).
\end{eqnarray}
Now let $X$ be a geometrically integral and proper $k$-scheme. Assume $\mathcal{E}$ is pure of rank one. Suppose furthermore, we are given an indecomposable direct summand $\mathcal{F}$ of $\mathcal{E}$. We want to understand the relation between the central simple $k$-algebras $\mathrm{End}(\mathcal{E})$ and $\mathrm{End}(\mathcal{F})$. The following result is very useful and will be needed later on quite frequently. 
\begin{prop}
Let $X$ be a proper $k$-scheme and $\mathcal{F}$ and $\mathcal{G}$ two coherent sheaves. If $\mathcal{F}\otimes_k \bar{k}\simeq \mathcal{G}\otimes_k \bar{k}$, then $\mathcal{F}$ is isomorphic to $\mathcal{G}$.
\end{prop}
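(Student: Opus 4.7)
The plan is to descend the $\bar{k}$-isomorphism to a finite extension $L/k$ and then to apply the Krull--Schmidt theorem for coherent sheaves on proper $k$-schemes (mentioned in the introduction, [5]). By flat base change, $\mathrm{Hom}_{\mathcal{O}_X}(\mathcal{F},\mathcal{G}) \otimes_k \bar{k} \simeq \mathrm{Hom}_{\mathcal{O}_{X_{\bar{k}}}}(\mathcal{F}_{\bar{k}},\mathcal{G}_{\bar{k}})$, so the hypothesized isomorphism $\phi$ and its inverse have finitely many coordinates in $\bar{k}$ with respect to any $k$-basis of $\mathrm{Hom}_{\mathcal{O}_X}(\mathcal{F},\mathcal{G})$ and hence descend to some finite extension $L/k$. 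Enlarging $L$ if needed, the identity relations $\phi_L \phi_L^{-1} = 1_{\mathcal{G}_L}$ and $\phi_L^{-1}\phi_L = 1_{\mathcal{F}_L}$ already hold over $L$, giving $\mathcal{F}_L \simeq \mathcal{G}_L$ as $\mathcal{O}_{X_L}$-modules.

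The key trick is now to forget the extra $L$-structure. Viewing $\mathcal{F}_L = \mathcal{F} \otimes_k L$ as an $\mathcal{O}_X$-module and using the $k$-vector space isomorphism $L \simeq k^{[L:k]}$, one has $\mathcal{F}_L \simeq \mathcal{F}^{\oplus [L:k]}$ and analogously $\mathcal{G}_L \simeq \mathcal{G}^{\oplus [L:k]}$ as $\mathcal{O}_X$-modules. Since $\phi_L$ is in particular $\mathcal{O}_X$-linear, one obtains $\mathcal{F}^{\oplus [L:k]} \simeq \mathcal{G}^{\oplus [L:k]}$ as coherent sheaves on $X$. Applying Krull--Schmidt and comparing the unique indecomposable decompositions on both sides (the multiplicities scale by $[L:k]$, so dividing out recovers equal multisets of indecomposables) then yields $\mathcal{F} \simeq \mathcal{G}$.

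The main obstacle is the descent step: ensuring that $\phi$, $\phi^{-1}$, and the two identity relations all descend to a single finite extension $L/k$. This is a routine consequence of the fact that every element of a tensor product $V \otimes_k \bar{k}$ lies in $V \otimes_k L$ for some finite $L/k$, but it has to be set up carefully using flat base change for $\mathrm{Hom}$ on a proper $k$-scheme.
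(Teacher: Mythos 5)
Your proof is correct and follows essentially the same route as the paper's (which reproduces Wiegand's argument): descend the isomorphism to a finite extension $L/k$, identify $\mathcal{F}\otimes_k L$ viewed as an $\mathcal{O}_X$-module with $\mathcal{F}^{\oplus [L:k]}$ via a $k$-basis of $L$ (this is exactly the paper's $\pi_*\pi^*\mathcal{F}\simeq\mathcal{F}^{\oplus d}$), and conclude by Krull--Schmidt. Your treatment of the descent step via flat base change for $\mathrm{Hom}$ is slightly more explicit than the paper's appeal to $\bar{k}$ being a direct limit of finite extensions, but the argument is the same.
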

\begin{proof}
Wiegand [37], Lemma 2.3 proved this in the case $X$ is projective over $k$, but the proof remains valid for proper $k$-schemes. For convenience to the reader we reproduce the proof and follow exactly the lines of the proof given in [37]. Since the extension $k\subset \bar{k}$ is a direct limit of finite extensions, it suffices to prove the statement for finite field extensions. Now suppose $k\subset L$ is a finite field extension and $\pi:X\otimes_k L\rightarrow X$ the projection. Choose a basis $\{\alpha_1,...,\alpha_d\}$ for $L$ over $k$. By assumption we have $\pi^*\mathcal{F}\simeq \pi^*\mathcal{G}$. For the coherent sheaf $\mathcal{A}=\pi_*\pi^*\mathcal{F}$ we have over any affine open set $U\subset X$, $\mathcal{A}(U)=\mathcal{F}(U)\otimes_k L$ and there is a unique $\mathcal{O}_X(U)$-module isomorphism from $\mathcal{A}(U)$ to $\mathcal{F}(U)^{\oplus d}$, assigning $m\otimes\alpha_i$ to $(0,0,...,m,...,0)$, where $m$ is located at the $i^{th}$ entry. This yields $\pi_*\pi^*\mathcal{F}\simeq \mathcal{F}^{\oplus d}$ and obviously the same holds for $\pi_*\pi^*\mathcal{G}$. Hence $\pi_*\pi^*\mathcal{F}\simeq \mathcal{F}^{\oplus d}\simeq \pi_*\pi^*\mathcal{G}\simeq \mathcal{G}^{\oplus d}$ and we conclude from Krull--Schmidt Theorem that $\mathcal{F}\simeq \mathcal{G}$.  
\end{proof}
Notice that the statement of Proposition 3.4 also holds if one considers the base change to the separable closure $k^{sep}$ instead of the base change to $\bar{k}$ (see [3], p.1325). A simple consequence of Proposition 3.4 is the following:
\begin{prop}
Let $X$ be a proper and geometrically integral $k$-scheme. If $\mathcal{E}$ and $\mathcal{E}'$ are two indecomposable pure sheaves of same type, then $\mathcal{E}\simeq \mathcal{E}'$. 
\end{prop}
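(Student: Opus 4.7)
The plan is to reduce the statement to Proposition 3.3 by comparing suitable direct sums of $\mathcal{E}$ and $\mathcal{E}'$, and then extract the isomorphism $\mathcal{E}\simeq\mathcal{E}'$ via Krull--Schmidt.

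More precisely, by the definition of absolutely isotypical of type $\mathcal{W}$, I would begin by writing
\begin{eqnarray*}
\mathcal{E}\otimes_k\bar{k}\simeq \mathcal{W}^{\oplus(n+1)},\qquad \mathcal{E}'\otimes_k\bar{k}\simeq \mathcal{W}^{\oplus(m+1)}
\end{eqnarray*}
for some non-negative integers $n,m$. The multiplicities $n$ and $m$ are a priori not assumed to agree, so the first task is to arrange matters so that Proposition 3.3 can be applied.

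The key trick is to pass to direct sums that are clearly isomorphic after base change: I would consider $\mathcal{E}^{\oplus(m+1)}$ and $(\mathcal{E}')^{\oplus(n+1)}$. Base change to $\bar{k}$ then yields on both sides $\mathcal{W}^{\oplus(n+1)(m+1)}$, so that
\begin{eqnarray*}
\mathcal{E}^{\oplus(m+1)}\otimes_k\bar{k}\simeq (\mathcal{E}')^{\oplus(n+1)}\otimes_k\bar{k}.
\end{eqnarray*}
Since $X$ is proper over $k$, Proposition 3.3 applies and descends this to an isomorphism
\begin{eqnarray*}
\mathcal{E}^{\oplus(m+1)}\simeq (\mathcal{E}')^{\oplus(n+1)}
\end{eqnarray*}
on $X$ itself.

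Finally, I would invoke the Krull--Schmidt theorem for coherent sheaves on proper $k$-schemes (as recalled in the introduction, citing [5]): both $\mathcal{E}$ and $\mathcal{E}'$ are indecomposable by hypothesis, so the two displays above are already decompositions into indecomposable summands, and uniqueness of such decompositions up to isomorphism and permutation forces $\mathcal{E}\simeq\mathcal{E}'$ (and incidentally $n=m$). I do not anticipate any real obstacle here; the only subtle point is that Proposition 3.3 is formulated for arbitrary coherent sheaves on proper $k$-schemes, which is exactly what makes the multiplicity trick legitimate without any projectivity or smoothness assumption on $X$.
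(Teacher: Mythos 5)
Your argument is correct and is precisely the intended one: the paper leaves Proposition 3.4 as a ``simple consequence of Proposition 3.3,'' and the multiplicity trick you use (comparing $\mathcal{E}^{\oplus(m+1)}$ with $(\mathcal{E}')^{\oplus(n+1)}$, descending the isomorphism via Proposition 3.3, then invoking Krull--Schmidt) is exactly the argument the author spells out in the uniqueness part of the proof of Proposition 4.3. No gaps; one could even note that your proof uses only properness, not geometric integrality.
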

We are now able to understand the relation between the central simple $k$-algebras $\mathrm{End}(\mathcal{E})$ and $\mathrm{End}(\mathcal{F})$ from above.
\begin{prop}
Let $X$ be a proper and geometrically integral $k$-scheme. Let $\mathcal{E}$ be indecomposable and pure of rank one and $\mathcal{F}$ an indecomposable direct summand of $\mathcal{E}$. Then $\mathrm{End}(\mathcal{E})\simeq M_n(\mathrm{End}(\mathcal{F}))$ and hence $\mathrm{End}(\mathcal{E})$ and $\mathrm{End}(\mathcal{F})$ are Brauer equivalent. 
\end{prop}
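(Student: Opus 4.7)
The plan is to prove that $\mathcal{E}\simeq\mathcal{F}^{\oplus n}$ for some integer $n\geq 1$; the two statements of the proposition will then follow at once from the standard identification $\mathrm{End}(\mathcal{F}^{\oplus n})\simeq M_n(\mathrm{End}(\mathcal{F}))$, combined with Proposition 3.2 applied to $\mathcal{F}$ (which ensures $\mathrm{End}(\mathcal{F})$ is a central simple $k$-algebra, so that Brauer equivalence makes sense).

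First I would check that every indecomposable direct summand of $\mathcal{E}$ is itself absolutely isotypical of the same type. By hypothesis there is an invertible sheaf $\mathcal{L}$ on $X\otimes_k\bar k$ with $\mathcal{E}\otimes_k\bar k\simeq\mathcal{L}^{\oplus(n+1)}$. Base-changing any decomposition $\mathcal{E}\simeq\mathcal{F}\oplus\mathcal{G}$ over $k$ realizes $\mathcal{F}\otimes_k\bar k$ as a direct summand of $\mathcal{L}^{\oplus(n+1)}$, so the Krull--Schmidt theorem applied on the proper $\bar k$-scheme $X\otimes_k\bar k$ forces $\mathcal{F}\otimes_k\bar k\simeq\mathcal{L}^{\oplus s}$ for some $s\geq 1$. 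Hence $\mathcal{F}$ is itself an indecomposable absolutely rank-one-isotypical sheaf of type $\mathcal{L}$.

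Next I would decompose $\mathcal{E}$ into indecomposables. The Krull--Schmidt theorem for coherent sheaves on proper $k$-schemes (cited in the introduction) gives $\mathcal{E}\simeq\bigoplus_{i=1}^m\mathcal{F}_i$ with each $\mathcal{F}_i$ indecomposable. By the same argument as in the previous paragraph, every $\mathcal{F}_i$ is an indecomposable absolutely isotypical sheaf of type $\mathcal{L}$. Now Proposition 3.4 is the decisive tool: any two indecomposable absolutely isotypical sheaves of the same type are isomorphic. Therefore all $\mathcal{F}_i$ are isomorphic to $\mathcal{F}$, and we obtain $\mathcal{E}\simeq\mathcal{F}^{\oplus n}$ for a well-defined $n\geq 1$ (in fact $n=(n{+}1)/s$ once one compares ranks after base change).

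Finally, $\mathrm{End}(\mathcal{E})\simeq\mathrm{End}(\mathcal{F}^{\oplus n})\simeq M_n(\mathrm{End}(\mathcal{F}))$. Since $\mathrm{End}(\mathcal{F})$ is a central simple $k$-algebra by Proposition 3.2 (applicable because $\mathcal{F}$ is absolutely rank-one-isotypical, as shown above), the Wedderburn decomposition identifies $\mathrm{End}(\mathcal{E})$ and $\mathrm{End}(\mathcal{F})$ as representatives of the same class in $\mathrm{Br}(k)$, i.e.\ they are Brauer equivalent. The only non-formal step is the first one, where one has to transfer Krull--Schmidt from $k$ to $\bar k$ and back; this is the pivot of the argument but is immediate once one knows both $X$ and $X\otimes_k\bar k$ are proper and that $\mathcal{L}$ is indecomposable.
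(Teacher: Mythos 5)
Your argument is correct and follows essentially the same route as the paper: Krull--Schmidt decomposition of $\mathcal{E}$, base change to $\bar{k}$ to see each indecomposable summand is absolutely rank-one-isotypical of type $\mathcal{L}$, Proposition 3.4 to identify all summands with $\mathcal{F}$, and then $\mathrm{End}(\mathcal{F}^{\oplus n})\simeq M_n(\mathrm{End}(\mathcal{F}))$ together with Proposition 3.2. The only cosmetic issue is the reuse of the symbol $n$ for both the number of summands and the rank; otherwise this matches the paper's proof.
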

\begin{proof}
Since $X$ is geometrically integral and proper, Proposition 3.3 yields that $\mathrm{End}(\mathcal{E})$ is a central simple $k$-algebra. Furthermore, since $X$ is proper we can decompose $\mathcal{E}$ as a direct sum of indecomposable vector bundles according to the Krull--Schmidt Theorem. Now let $\mathcal{E}=\bigoplus^n_{i=1}\mathcal{E}_i$ be the Krull--Schmidt decomposition of $\mathcal{E}$. Since $\mathcal{E}$ is pure of rank one, there is an invertible sheaf $\mathcal{L}$ such that $\mathcal{E}\otimes_k\bar{k}\simeq \bigoplus^r_{j=1}\mathcal{L}$. In view of $\mathcal{E}\otimes_k\bar{k}\simeq \bigoplus^n_{i=1}(\mathcal{E}_i\otimes_k\bar{k})$, we have an isomorphism
\begin{eqnarray*}
\bigoplus^r_{j=1}\mathcal{L}\simeq \mathcal{E}\otimes_k\bar{k}\simeq \bigoplus^n_{i=1}(\mathcal{E}_i\otimes_k\bar{k}).
\end{eqnarray*} Therefore, by the Krull--Schmidt Theorem for vector bundles on $X\otimes_k\bar{k}$, we obtain that $\mathcal{E}_i$ is also pure of type $\mathcal{L}$. Since all the bundles $\mathcal{E}_i$ are indecomposable, Proposition 3.5 yields that they are all isomorphic. Hence $\mathcal{E}\simeq\bigoplus^n_{i=1}\mathcal{E}_1$ and thus $\mathcal{F}\simeq \mathcal{E}_1$ by Krull--Schmidt Theorem. By Proposition 3.3, the endomorphism algebra $\mathrm{End}(\mathcal{F})$ is a central division algebra and therefore $\mathrm{End}(\mathcal{E})\simeq\mathrm{End}(\bigoplus^n_{i=1}\mathcal{F})\simeq M_n(\mathrm{End}(\mathcal{F}))$. This shows that $\mathrm{End}(\mathcal{F})$ and $\mathrm{End}(\mathcal{E})$ are Brauer equivalent.
\end{proof}
\begin{rema}
\textnormal{The proof of the above proposition shows that all the indecomposable direct summands in the Krull--Schmidt decomposition of a vector bundle which is pure of rank one are isomorphic. In particular, $\mathrm{End}(\mathcal{F})$ is isomorphic to the unique central division algebra $D$ with $M_n(D)\simeq \mathrm{End}(\mathcal{E})$}.
\end{rema}
\section{AS-bundles on proper $k$-schemes}
In this section we prove the first main result of the present paper. We start with the main definition. 
\begin{defi}
\textnormal{Let $X$ be a $k$-scheme. A vector bundle $\mathcal{E}$ on $X$ is called \emph{absolutely split} (\emph{separably split}) if it splits after base change as a direct sum of invertible sheaves on $X\otimes_k \bar{k}$ (resp. $X\otimes_k k^{sep}$). For an absolutely split locally free sheaf we shortly write \emph{AS-bundle}.}
\end{defi}
\begin{prop}
Let $X$ be a proper $k$-scheme and $\mathcal{E}$ a vector bundle on $X$. Then $\mathcal{E}$ is absolutely split if and only if it is separably split.
\end{prop}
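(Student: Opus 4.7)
Plan: The ``if'' direction is trivial: since $k^{sep}\subset\bar k$, any splitting of $\mathcal{E}\otimes_k k^{sep}$ as a sum of invertibles base changes to one over $\bar k$. For the converse, suppose $\mathcal{E}\otimes_k\bar k\simeq\bigoplus_i\mathcal{L}_i$. The strategy is to apply Krull--Schmidt over $k^{sep}$, write $\mathcal{E}\otimes_k k^{sep}\simeq\bigoplus_j\mathcal{F}_j$ with each $\mathcal{F}_j$ indecomposable, and prove that each $\mathcal{F}_j$ has rank one.

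Comparing the two decompositions of $\mathcal{E}\otimes_k\bar k$ via Krull--Schmidt on $X\otimes_k\bar k$ immediately yields that each $\mathcal{F}_j\otimes_{k^{sep}}\bar k$ is a direct sum of certain $\mathcal{L}_i$'s. Hence it suffices to prove that $\mathcal{F}_j\otimes_{k^{sep}}\bar k$ is itself indecomposable; being a direct sum of invertible sheaves it would then equal a single $\mathcal{L}_i$, forcing $\mathrm{rk}(\mathcal{F}_j)=1$.

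I would prove this indecomposability by studying the endomorphism algebra $A:=\mathrm{End}(\mathcal{F}_j)$, a finite-dimensional $k^{sep}$-algebra. Since $\mathcal{F}_j$ is indecomposable on a proper $k^{sep}$-scheme, $A$ is local with residue division algebra $D:=A/\mathrm{rad}(A)$ over $k^{sep}$. The crucial observation is that $D$ must be a purely inseparable commutative field extension of $k^{sep}$: the center $Z(D)$ is a finite extension of the separably closed field $k^{sep}$, hence purely inseparable; a short argument then shows $Z(D)$ is itself separably closed, so its Brauer group vanishes and $D=Z(D)$. Granted this, the multiplication map $D\otimes_{k^{sep}}\bar k\to\bar k$ has nil kernel by the Frobenius identity $(\alpha\otimes 1-1\otimes\alpha)^{p^n}=0$ whenever $\alpha^{p^n}\in k^{sep}$, and nil ideals in finite-dimensional algebras are nilpotent. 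Combined with the nilpotent ideal $\mathrm{rad}(A)\otimes_{k^{sep}}\bar k$, this shows $A\otimes_{k^{sep}}\bar k\cong \mathrm{End}(\mathcal{F}_j\otimes\bar k)$ is local, so $\mathcal{F}_j\otimes\bar k$ admits no non-trivial idempotents and is indecomposable, as desired.

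The main obstacle I anticipate is the characteristic $p$ analysis of $D$. Characteristic zero is vacuous because $k^{sep}=\bar k$; in positive characteristic one must chain together two non-elementary facts: (i) finite extensions of $k^{sep}$ are purely inseparable and purely inseparable extensions of separably closed fields are themselves separably closed, together forcing $D$ to be a commutative field; and (ii) Frobenius-nilpotency ensuring that tensoring the purely inseparable extension $D$ with $\bar k$ produces a local ring with residue field $\bar k$. Assembling these cleanly, particularly (i), is the step that requires the most care, since it is the only place where the distinction between $k^{sep}$ and $\bar k$ (as opposed to between, say, $k$ and $\bar k$) is genuinely used.
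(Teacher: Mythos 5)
Your proof is correct, and its skeleton coincides with the paper's: both directions reduce to decomposing $\mathcal{E}\otimes_k k^{sep}$ by Krull--Schmidt and showing that each indecomposable summand stays indecomposable after the purely inseparable base change $k^{sep}\subset\bar k$. The difference is in how that key step is handled. The paper simply cites Pumpl\"un [30], Proposition 3.1, for the preservation of indecomposability, whereas you prove it from scratch: $\mathrm{End}(\mathcal{F}_j)$ is local because $\mathcal{F}_j$ is indecomposable on a proper scheme; its residue division algebra over the separably closed field $k^{sep}$ is forced to be a commutative, purely inseparable field extension (finite extensions of $k^{sep}$ are purely inseparable, such extensions are again separably closed, and separably closed fields have trivial Brauer group); and Frobenius-nilpotency of $\alpha\otimes 1-1\otimes\alpha$ then shows $\mathrm{End}(\mathcal{F}_j)\otimes_{k^{sep}}\bar k\simeq\mathrm{End}(\mathcal{F}_j\otimes_{k^{sep}}\bar k)$ is still local, so no idempotents appear. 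This is exactly the standard proof of the cited result, and every link in your chain is sound (the identification of $\mathrm{End}(\mathcal{F}_j)\otimes_{k^{sep}}\bar k$ with $\mathrm{End}(\mathcal{F}_j\otimes_{k^{sep}}\bar k)$ is flat base change for coherent cohomology, which the paper itself uses freely). What your version buys is self-containedness and a clear isolation of where separability genuinely enters; what the paper's version buys is brevity. One could also streamline your write-up slightly by arguing contrapositively as the paper does, which avoids having to observe that a summand of a sum of invertibles which is indecomposable must be a single invertible, but this is cosmetic.
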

\begin{proof}
If $\mathcal{E}$ is separably split, then it is also absolutely split. For the other implication, assume $\mathcal{E}$ is not separably split. Then after base change to $k^{sep}$ we obtain from the Krull--Schmidt decomposition 
\begin{eqnarray*}
\mathcal{E}\otimes_k k^{sep}\simeq \bigoplus^r_{i=1}\mathcal{E}_i 
\end{eqnarray*} that at least one of the indecomposable $\mathcal{E}_i$ on $X\otimes_k k^{sep}$ has rank $>1$. According to [31], Proposition 3.1, the locally free sheaves $\mathcal{E}_i$ remain indecomposable after base change to $\bar{k}$. Therefore, there exists at least one $\mathcal{E}_i\otimes_{k^{sep}}\bar{k}$ with $\mathrm{rk}(\mathcal{E}_i\otimes_{k^{sep}}\bar{k})>1$. This implies that
\begin{eqnarray*}
\mathcal{E}\otimes_k \bar{k}\simeq (\mathcal{E}\otimes_k k^{sep})\otimes_{k^{sep}}\bar{k}\simeq \bigoplus^r_{i=1}(\mathcal{E}_i\otimes_{k^{sep}}\bar{k}) 
\end{eqnarray*} is not absolutely split. This contradicts the assumption and completes the proof.
\end{proof}
For the proof of Theorem 4.5 we need the following generalization  of [3], Proposition 3.4.
\begin{prop}
Let $X$ be a smooth projective variety over $k$. Let $\mathcal{E}$ be an indecomposable vector bundle on $X\otimes_k k^s$ and suppose $\{\mathcal{E}_1,...,\mathcal{E}_r\}$ is the $\mathrm{Gal}(k^s|k)$-orbit of $\mathcal{E}$. Then there is an up to isomorphism unique indecomposable vector bundle $\mathcal{F}$ on $X$ such that $\mathcal{F}\otimes_k k^s\simeq \bigoplus^r_{i=1}\mathcal{E}^{\oplus d}_i$ for a unique positive integer $d>0$.
\end{prop}
\begin{proof}
The vector bundle $\mathcal{V}:=\mathcal{E}_1\oplus \mathcal{E}_2\oplus...\oplus \mathcal{E}_r$ is by assumption Galois invariant. Since $\mathcal{E}$ is indecomposable, it follows that any $\mathcal{E}_i$ is indecomposable, too. To get our assertion, we proceed as in the proof of Proposition 3.4 in [3] to obtain a vector bundle $\mathcal{M}$ on $X$ satisfying $\mathcal{M}\otimes_k k^s\simeq \mathcal{V}^{\oplus m}=\bigoplus^r_{i=1}\mathcal{E}^{\oplus m}_i$ for a suitable positive integer $m>0$. Take any direct summand $\mathcal{W}$ of $\mathcal{M}$ and observe that $\mathcal{W}\otimes_k k^s\simeq \mathcal{V}^{\oplus r}$ for some positive integer $r\leq m$. In fact this follows from the assumption that $\{\mathcal{E}_1,...,\mathcal{E}_r\}$ is the $\mathrm{Gal}(k^s|k)$-orbit of the indecomposable bundle $\mathcal{E}$ and because $\bigoplus^r_{i=1}\mathcal{E}^{\oplus m}_i$ is the Krull--Schmidt decomposition of $\mathcal{M}\otimes_k k^s$. Now choose among the direct summands of $\mathcal{M}$ a bundle $\mathcal{F}$ with smallest rank. Denote this rank by $d$. Now one proceeds as in the proof of Proposition 3.4 in [3] to conclude with Krull--Schmidt Theorem and Proposition 3.4 that $\mathcal{F}$ is unique up to isomorphism.
\end{proof}
For $G=\mathrm{Gal}(k^{sep}|k)$ we denote by $\mathrm{Pic}^G(X\otimes_k k^{sep})\subset \mathrm{Pic}(X\otimes_k k^{sep})$ the subgroup of isomorphism classes of $G$-invariant invertible sheaves. An immediate consequence of Proposition 4.3 is:  
\begin{cor}
Let $X$ be a proper $k$-scheme. For all $\mathcal{L}\in \mathrm{Pic}^G(X\otimes_k k^{sep})$ there is an up to isomorphism unique indecomposable $\mathcal{M}_{\mathcal{L}}$ on $X$ such that $\mathcal{M}_{\mathcal{L}}\otimes_k\bar{k}\simeq \mathcal{L}^{\oplus n}$.
\end{cor}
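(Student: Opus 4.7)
The plan is to apply Proposition 4.3 directly with $\mathcal{W}=\mathcal{L}$. Two things need to be checked to invoke it: first, every invertible sheaf has rank one and is therefore automatically indecomposable as a locally free sheaf; second, the hypothesis $\mathcal{L}\in\mathrm{Pic}^G(X\otimes_k k^{sep})$ is precisely the statement that the isomorphism class of $\mathcal{L}$ is $G$-invariant. With both hypotheses satisfied, Proposition 4.3 delivers an indecomposable locally free sheaf $\mathcal{M}_{\mathcal{L}}$ on $X$, unique up to isomorphism, together with an integer $n\geq 1$ such that
\[
\mathcal{M}_{\mathcal{L}}\otimes_k k^{sep}\simeq \mathcal{L}^{\oplus n}.
\]

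To obtain the statement as written, I would tensor this isomorphism with $\bar{k}$ over $k^{sep}$. Since $k\subset k^{sep}\subset \bar{k}$ and base change commutes with direct sums, this yields
\[
\mathcal{M}_{\mathcal{L}}\otimes_k \bar{k}\simeq (\mathcal{L}\otimes_{k^{sep}}\bar{k})^{\oplus n},
\]
which under the customary identification of $\mathcal{L}$ with its pullback to $X\otimes_k\bar{k}$ is exactly $\mathcal{L}^{\oplus n}$. For the uniqueness statement with respect to this $\bar{k}$-description, I would appeal to Proposition 3.3: any two indecomposable candidates $\mathcal{M},\mathcal{M}'$ on $X$ satisfying $\mathcal{M}\otimes_k\bar{k}\simeq \mathcal{L}^{\oplus n}\simeq \mathcal{M}'\otimes_k\bar{k}$ have isomorphic $\bar{k}$-base changes and are therefore already isomorphic on $X$.

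There is essentially no obstacle here; the entire content of the corollary is packaged in Proposition 4.3. The only small additional observations are that indecomposability of $\mathcal{L}$ is automatic from rank one, that passing from $k^{sep}$ to $\bar{k}$ preserves the direct-sum shape $\mathcal{L}^{\oplus n}$, and that Proposition 3.3 upgrades the uniqueness from the level of $k^{sep}$-base change to the level of $\bar{k}$-base change.
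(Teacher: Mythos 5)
Your proposal is correct and matches the paper exactly: the paper states this corollary as an immediate consequence of Proposition 4.3 with no further proof, and your argument simply spells out that deduction (indecomposability of an invertible sheaf, $G$-invariance as the hypothesis, passage from $k^{sep}$ to $\bar{k}$, and Proposition 3.3 for uniqueness). No issues.
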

More general, we have:
\begin{thm}
Let $X$ be a proper $k$-scheme and $\mathcal{M}$ an indecomposable separably split vector bundle. Then there is a unique positive integer $d>0$ such that $\mathcal{M}\otimes_k k^{sep}\simeq \mathcal{L}^{\oplus d}_1\oplus\cdots\oplus \mathcal{L}^{\oplus d}_m$, where $\{\mathcal{L}_1,...,\mathcal{L}_l\}$ is the $\mathrm{Gal}(k^{sep}|k)$-orbit of a unique $\mathcal{L}\in \mathrm{Pic}^G(X\otimes_k k^{sep})$. 
\end{thm}
\begin{proof}
Let $\mathcal{M}$ be an indecomposable separably split vector bundle. By definition we have 
\begin{eqnarray*}
\mathcal{M}\otimes_k k^{sep}\simeq \bigoplus_{i=1}^n\mathcal{L}_i^{\oplus m_i},
\end{eqnarray*} for suitable invertible sheaves $\mathcal{L}_i$ on $X\otimes_k k^{sep}$. Since $\mathcal{M}\otimes_k k^{sep}$ is $\mathrm{Gal}(k^{sep}|k)$ invariant, the set $\{\mathcal{L}_1,...,\mathcal{L}_n\}$ decomposes as the disjoint union of $\mathrm{Gal}(k^{sep}|k)$-orbits. Let us denote by $\mathbb{B}_l=\{\mathcal{M}(l)_1,...,\mathcal{M}(l)_{s_l}\}$ the respective orbits. Without loss of generality, $\mathbb{B}_1=\{\mathcal{L}_1,...,\mathcal{L}_r\}=\{\sigma^*(\mathcal{L}_1)\mid \sigma\in\mathrm{Gal}(k^{sep}|k)\}$. We claim that $m_1=m_2=...=m_r$. Assume by contradiction that there is a $j>0$ such that $m_1\neq m_j$, $j=1,...,r$. Again, without loss of generality $j=2$. Now there is a $\sigma\in\mathrm{Gal}(k^{sep}|k)$ with $\sigma^*(\mathcal{L}_1)\simeq \mathcal{L}_2$. We then must have
\begin{eqnarray*}
\sigma^*(\mathcal{L}_1)^{\oplus m_1}\oplus\sigma^*(\mathcal{L}_2)^{\oplus m_2}\oplus\cdots\oplus \sigma^*(\mathcal{L}_r)^{\oplus m_r}\simeq \mathcal{L}^{\oplus m_1}_1\oplus \mathcal{L}^{\oplus m_2}_2\oplus\cdots \oplus\mathcal{L}^{\oplus m_r}_r. 
\end{eqnarray*}  
It follows from the fact that all $\mathcal{L}_i$ are indecomposable and Krull--Schmidt Theorem that $m_1=m_2$. This contradicts the assumption $m_1\neq m_2$. The corresponding statement applies for any $\mathbb{B}_l$. 

Now by Proposition 4.3 it follows that for any $\mathbb{B}_l$ there is a unique $d_l$ and a unique $\mathcal{F}_l$ such that 
\begin{eqnarray*}
\mathcal{F}_l\otimes_k k^{sep}\simeq \mathcal{M}(l)^{\oplus d_l}_1\oplus\cdots\oplus\mathcal{M}(l)^{\oplus d_l}_{s_l}.  
\end{eqnarray*}
 We denote by $d$ the least common multiple of all the $d_l$. By definition, there are integers $n_l\in\mathbb{Z}$ such that $n_l\cdot d_l=d$. We find
\begin{eqnarray*}
(\mathcal{M}^{\oplus d})\otimes_k k^{sep}& \simeq &\bigoplus^r_{l=1}\bigoplus^{s_l}_{j=1}(\mathcal{M}(l)^{\oplus b_l}_j)^{\oplus d}\\
& \simeq & \bigoplus^r_{l=1}\bigoplus^{s_l}_{j=1}(\mathcal{M}(l)^{\oplus b_l\cdot d}_j)\\
& \simeq & \bigoplus^r_{l=1}\bigoplus^{s_l}_{j=1}(\mathcal{M}(l)^{\oplus d_l}_j)^{\oplus n_l\cdot b_l}\\
& \simeq & (\bigoplus^r_{l=1}\mathcal{F}^{\oplus n_l\cdot b_l}_l)\otimes_k k^{sep}.
\end{eqnarray*}
Now Proposition 3.3 shows
\begin{eqnarray*}
\mathcal{M}^{\oplus d}\simeq \bigoplus^r_{l=1}\mathcal{F}^{\oplus n_l\cdot b_l}_l.
\end{eqnarray*}
The Krull--Schmidt Theorem implies that $\mathcal{M}$ must be isomorphic to some $\mathcal{F}_l$. Hence the assertion.
\end{proof}

Theorem 4.5 has a interesting geometric consequence which is the content of the main theorem of this section. To prove this theorem, we first have to look more closely at the Hochschild--Serre spectral sequence for Galois coverings (see [26] for details). In general, for a proper $k$-scheme $X$ with $H^0(X,\mathcal{O}_X)=k$ one has the four-term exact sequence for the Galois covering $\mathrm{Spec}(k^{sep})\rightarrow \mathrm{Spec}(k)$ with $X'=X\otimes_k k^{sep}$:\\

$0\longrightarrow H^1(G,H^0(X'_{et},\mathbb{G}_m))\longrightarrow H^1(X_{et},\mathbb{G}_m)\longrightarrow H^0(G,H^1(X'_{et},\mathbb{G}_m))\longrightarrow$\\

$\longrightarrow H^2(G,H^0(X'_{et},\mathbb{G}_m))$.\\

Assuming $H^0(X,\mathcal{O}_X)=k$ one has $H^2(G,H^0(X'_{et},\mathbb{G}_m))=\mathrm{Br}(k)$ and the above exact sequence becomes
\begin{eqnarray}
0\longrightarrow \mathrm{Pic}(X)\longrightarrow \mathrm{Pic}^G(X')\longrightarrow \mathrm{Br}(k).
\end{eqnarray} 
In order to get a more geometric interpretation of the group $\mathrm{Pic}^G(X')$ we recall the basics of the Picard scheme. The main references for the Picard scheme are [19], [20] and [23].\\
For a scheme $X$, the Picard group $\mathrm{Pic}(X)$ is the same as $H^1(X,\mathcal{O}^*_X)$ (see [21], p.224). This group is also called the absolute Picard group. To get some relative version of this group we fix a $S$-scheme $X$ with structural morphism $f:X\rightarrow S$. Now for a $S$-scheme $T$ one has the following base change diagram
\begin{displaymath}
\begin{xy}
  \xymatrix{
      X_T=X\times_S T\ar[r]^{} \ar[d]_{f_T}    &   X\ar[d]^{f}                   \\
      T \ar[r]^{}             &   S             
  }
\end{xy}
\end{displaymath}\\
and one can form the presheaf $T\mapsto H^1(X_T,\mathcal{O}^*_{X_T})$ whose associated sheaf is $\mathbb{R}^1f_{T_*}\mathcal{O}^*_{X_T}$ (see [21], Proposition 8.1, p.250). In the Zariski topology one then defines the relative Picard functor simply as $\mathrm{Pic}_{(X/S)(Zar)}(T)=H^0(T,\mathbb{R}^1f_{T_*}\mathcal{O}^*_{X_T})$. One now can imitate this for the fppf-topology and define $\mathrm{Pic}_{(X/S)(fppf)}(T)=H^0(T,\mathbb{R}^1f_{T_*}\mathbb{G}_m)$. Again the Leray spectral sequence 
\begin{eqnarray*}
E^{pq}_2=H^p(T, \mathbb{R}^q f_{T_*}\mathcal{F}_{X_T})\Longrightarrow H^{p+q}(X_T,\mathcal{F}_{X_T}),
\end{eqnarray*} provides us with the exact sequence of low degree
\begin{eqnarray*}
0\longrightarrow H^1(T,f_{T_*}\mathcal{F})\longrightarrow H^1(X_{T},\mathcal{F})\longrightarrow H^0(T,\mathbb{R}^1f_{T_*}\mathcal{F})\longrightarrow H^2(T,f_{T_*}\mathcal{F}).
\end{eqnarray*}
The cohomology groups occurring in the exact sequence are meant with respect to the fppf-topology. Now for the sheaf $\mathbb{G}_m$ in the fppf-topology, that is also a sheaf in the \'etale-topology, the above exact sequence becomes: 
\begin{center}
$0\longrightarrow H^1(T,f_{T_*}\mathbb{G}_m)\longrightarrow H^1(X_{T},\mathbb{G}_m)\longrightarrow H^0(T,\mathbb{R}^1f_{T_*}\mathbb{G}_m)\longrightarrow H^2(T,f_{T_*}\mathbb{G}_m)$.
\end{center}
Assuming $f_*\mathcal{O}_X\simeq \mathcal{O}_S$ one obtains $f_{T_*}\mathbb{G}_m=\mathbb{G}_m$ and hence $H^1(T,f_{T_*}\mathbb{G}_m)=H^1(T,\mathbb{G}_m)$, what by [20], p.190-216 implies $H^1(T,f_{T_*}\mathbb{G}_m)=\mathrm{Pic}(T)$. Thus the above exact sequence becomes
\begin{center}
$0\longrightarrow \mathrm{Pic}(T)\longrightarrow \mathrm{Pic}(X_T)\longrightarrow H^0(T,\mathbb{R}^1f_{T_*}\mathbb{G}_m)\longrightarrow H^2(T,f_{T_*}\mathbb{G}_m)$.
\end{center} 
One defines the \emph{relative Picard functor} $\mathrm{Pic}_{(X/S)(et)}(T)$ in the \'etale-topology simply as $H^0(T,\mathbb{R}^1f_{T_*}\mathbb{G}_m)$ since the above exact sequence also exists in this topology. It is a very delicate problem under what kind of assumptions the Picard functor is representable in dependence of the given topology. We refer to [20] and [23] for details. The main theorem of Grothendieck about the Picard functor is that under the assumption that $f:X\rightarrow S$ is projective and flat and its geometric fibers are integral, the Picard functor  $\mathrm{Pic}_{(X/S) (et)}$ is representable (see [23], Theorem 4.8). The object that represents the Picard functor is a separated scheme locally of finite type over $S$, called the \emph{Picard scheme} of $X$ and is denoted by $\mathrm{Pic}_{X/S}$. In particular, for a proper $k$-scheme $X$ the Picard functor $\mathrm{Pic}_{(X/k) (et)}$ is also representable (see [23], Theorem 4.18.2, Corollary 4.18.3 or [20] p.236 ff.). Specializing further, the above exact sequence becomes for $S=\mathrm{Spec}(k)=T$ and $H^0(X,\mathcal{O}_X)=k$:
\begin{eqnarray*}
0\longrightarrow \mathrm{Pic}(\mathrm{Spec}(k))\longrightarrow \mathrm{Pic}(X)\longrightarrow \mathrm{Pic}_{(X/k)(fppf)}(k)\longrightarrow \mathrm{Br}(k).
\end{eqnarray*} As mentioned above, if $X$ is supposed to be proper over $S=\mathrm{Spec}(k)=T$ and $H^0(X,\mathcal{O}_X)=k$, the above sequence also holds in the \'etale-topology and we consider the Picard functor $H^0(T,\mathbb{R}^1f_{T_*}\mathbb{G}_m)$ as given in this topology. This gives us the following exact sequence:
\begin{eqnarray}
0\longrightarrow \mathrm{Pic}(\mathrm{Spec}(k))\longrightarrow \mathrm{Pic}(X)\longrightarrow \mathrm{Pic}_{(X/k)(et)}(k)\longrightarrow \mathrm{Br}(k).
\end{eqnarray}
This means that we can represent elements of $\mathrm{Pic}_{(X/k)(et)}(k)$ by invertible sheaves on $X$ that become zero in the Brauer group $\mathrm{Br}(k)$. If $X$ has a $k$-rational point, one can show that $\mathrm{Pic}(X)=\mathrm{Pic}_{(X/k)(et)}(k)$ and therefore the $k$-rational points of $\mathrm{Pic}_{X/k}$ are in one-to-one correspondence with invertible sheaves on $X$. The question arises what happens if $X$ does not admit a $k$-rational point. It is a well known fact that elements of $\mathrm{Pic}_{(X/k)(et)}(k)$ are in one-to-one correspondence with so called twisted line bundles on $X$ (see \cite{KO}, (7.4)). A generalization of this fact is the following theorem. 

\begin{thm}
Let $X$ be a proper $k$-scheme with $H^0(X,\mathcal{O}_X)=k$. Then the closed points of $\mathrm{Pic}_{X/k}$ are in one-to-one correspondence with isomorphism classes of indecomposable $AS$-bundles on $X$.
\end{thm}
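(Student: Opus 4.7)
The plan is to assemble the theorem from the three main ingredients already established: Proposition 4.2, Theorem 4.5, and the comparison of the two four-term exact sequences (4) and (5) furnished by the Hochschild--Serre and Leray spectral sequences.

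First I would use Proposition 4.2 to replace ``$AS$-bundle'' by ``separably split locally free sheaf,'' so that the left-hand side of the claimed bijection becomes the set of isomorphism classes of indecomposable separably split locally free sheaves. Then Theorem 4.5 supplies a map
\begin{eqnarray*}
\{\text{indecomp.\ separably split } \mathcal{M}\}/{\simeq}\;\longrightarrow\;\mathrm{Pic}^G(X\otimes_k k^{sep}),\qquad \mathcal{M}\mapsto \mathcal{L},
\end{eqnarray*}
where $\mathcal{L}$ is the unique element of $\mathrm{Pic}^G(X\otimes_k k^{sep})$ with $\mathcal{M}\simeq \mathcal{M}_{\mathcal{L}}$. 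Corollary 4.4 gives its inverse $\mathcal{L}\mapsto \mathcal{M}_{\mathcal{L}}$, so this map is a bijection.

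Next I would identify $\mathrm{Pic}^G(X\otimes_k k^{sep})$ with $\mathrm{Pic}_{X/k}(k)$. For this, the two four-term exact sequences
\begin{eqnarray*}
0\longrightarrow \mathrm{Pic}(X)\longrightarrow \mathrm{Pic}^G(X\otimes_k k^{sep})\longrightarrow \mathrm{Br}(k)
\end{eqnarray*}
coming from Hochschild--Serre, and
\begin{eqnarray*}
0\longrightarrow \mathrm{Pic}(X)\longrightarrow \mathrm{Pic}_{(X/k)(et)}(k)\longrightarrow \mathrm{Br}(k),
\end{eqnarray*}
coming from the Leray spectral sequence (both of which appear in the preceding discussion under the hypothesis $H^0(X,\mathcal{O}_X)=k$), sit in a natural commutative diagram. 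A diagram chase (or simply the fact that both middle terms are the kernel of the same map $\mathrm{Pic}(X\otimes_k k^{sep})\to\mathrm{Br}(k)$ quotiented by the same $\mathrm{Pic}(X)$) identifies the two middle groups. Since the Picard functor is representable under our hypotheses by a theorem of Grothendieck, $\mathrm{Pic}_{(X/k)(et)}(k)=\mathrm{Hom}(\mathrm{Spec}(k),\mathrm{Pic}_{X/k})=\mathrm{Pic}_{X/k}(k)$.

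Composing the two bijections gives the desired one-to-one correspondence between $k$-rational points of $\mathrm{Pic}_{X/k}$ and isomorphism classes of indecomposable $AS$-bundles on $X$. No real obstacle remains, since all the hard work has been done; the only point requiring care is ensuring that the identification $\mathrm{Pic}^G(X\otimes_k k^{sep})\simeq \mathrm{Pic}_{(X/k)(et)}(k)$ from the comparison of (4) and (5) is compatible with the assignment $\mathcal{L}\mapsto \mathcal{M}_{\mathcal{L}}$, i.e.\ that passing between the \'etale picture and the explicit Galois-descent picture of Proposition 4.3 does not introduce an auxiliary choice. This is a formality once one recalls that both sequences arise from the same covering $\mathrm{Spec}(k^{sep})\to\mathrm{Spec}(k)$, so the maps $\mathrm{Pic}(X)\to\mathrm{Pic}^G(X\otimes_k k^{sep})$ and $\mathrm{Pic}(X)\to\mathrm{Pic}_{X/k}(k)$ are induced by the same pullback.
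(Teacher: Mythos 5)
Your proposal is correct and follows essentially the same route as the paper: the paper's proof also combines Proposition 4.2 (separably split $=$ absolutely split), Corollary 4.4 and Theorem 4.5 (bijection between $\mathrm{Pic}^G(X\otimes_k k^{sep})$ and indecomposable separably split sheaves), and the comparison of the exact sequences (4) and (5) identifying $\mathrm{Pic}^G(X\otimes_k k^{sep})$ with $\mathrm{Pic}_{X/k}(k)$, merely phrasing it as a single map checked to be injective and surjective rather than as a composition of two bijections. Your added remark about compatibility of the two identifications is a point the paper itself passes over without comment.
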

\begin{proof}
We denote by $\mathrm{AS}_X$ the set of isomorphism classes of indecomposable $AS$-bundles on $X$. In view of Proposition 4.2, we can restrict ourselves to consider separably split bundles. Let $y\in \mathrm{Pic}_{X/k}$ be a closed point. Then $k(y)$ is a finite extension of $k$. The closed point $y$ corresponds to a $\mathrm{Gal}(k^{sep}|k)$-orbit $\{y_1,...,y_r\}$ of points in $\mathrm{Pic}_{X/k}(k^{sep})$. Recall the sequence (5) for $X^{sep}:=X\otimes_k k^{sep}$
\begin{eqnarray*}
0\longrightarrow \mathrm{Pic}(\mathrm{Spec}(k^{sep}))\longrightarrow \mathrm{Pic}(X^{sep})\longrightarrow \mathrm{Pic}_{(X^{sep}/k^{sep})(et)}(k^{sep})\longrightarrow \mathrm{Br}(k^{sep}).
\end{eqnarray*}
Since $\mathrm{Pic}_{X/k}(k^{sep})=\mathrm{Pic}_{(X^{sep}/k^{sep})}(k^{sep})$ and $\mathrm{Br}(k^{sep})=0$, we see that $\mathrm{Pic}(X^{sep})$ is isomorphic to $\mathrm{Pic}_{X/k}(k^{sep})$. Notice that this isomorphism is compatible with the action of $\mathrm{Gal}(k^{sep}|k)$. So for any $\mathrm{Gal}(k^{sep}|k)$-orbit $\{y_1,...,y_r\}$ of points in $\mathrm{Pic}_{X/k}(k^{sep})$, there is a unique $\mathrm{Gal}(k^{sep}|k)$-orbit $\{\mathcal{L}_1,...,\mathcal{L}_r\}$ of line bundles in $\mathrm{Pic}(X^{sep})$. By Proposition 4.2 and Proposition 4.3, there is a uniquely determined indecomposable $AS$-bundle $\mathcal{M}_y$ satisfying 
\begin{eqnarray*}
\mathcal{M}_y\otimes_k k^{sep}\simeq \mathcal{L}^{\oplus d_y}_1\oplus\cdots\oplus \mathcal{L}^{\oplus d_y}_r.
\end{eqnarray*}
So there is a well defined map
\begin{eqnarray*}
\phi\colon \{\textnormal{closed points in}\ \mathrm{Pic}_{X/k}\}\longrightarrow \mathrm{AS}_X,
\end{eqnarray*}
assigning to each closed point $y$ the $AS$-bundle $\mathcal{M}_y$ as described above. From Theorem 4.5 it follows that $\phi$ is surjective. It remains to show that $\phi$ is injective. For this, let $y$ and $y'$ be closed points and $\mathcal{M}_y$ and $\mathcal{M}_{y'}$ the corresponding indecomposable $AS$-bundles. Assume $\mathcal{M}_y\simeq \mathcal{M}_{y'}$. From Theorem 4.5 we conclude 
\begin{eqnarray*}
\mathcal{M}_y\otimes_k k^{sep}\simeq \mathcal{L}^{\oplus d}_1\oplus\cdots\oplus \mathcal{L}^{\oplus d}_r\simeq \mathcal{N}^{\oplus d'}_1\oplus\cdots\oplus \mathcal{N}^{\oplus d'}_s\simeq \mathcal{M}_{y'}\otimes_k k^{sep}
\end{eqnarray*}
The Krull--Schmidt Theorem implies $r=s$ and $d=d'$. Moreover, we have $\mathcal{L}_i\simeq \mathcal{N}_j$. Therefore, the Galois orbits $\{y_1,...,y_r\}$ and $\{y'_1,...,y'_s\}$ corresponding to the orbits $\{\mathcal{L}_1,...,\mathcal{L}_r\}$ and $\{\mathcal{N}_1,...,\mathcal{N}_s\}$ are the same. By descent, this gives $y=y'$. 
\end{proof}

\section{AS-bundles on proper $k$-schemes with cyclic Picard group}
In this section we prove the second main theorem  of the present paper. The results of this section will be applied in Section 6 to study Brauer--Severi varieties.\\

We start by fixing some notation and stating some facts. From Proposition 3.4 we conclude that $\mathrm{Pic}(X)$ is a subgroup of $\mathrm{Pic}(X\otimes_k\bar{k})$. In particular, if $\mathrm{Pic}(X\otimes_k\bar{k})\simeq \mathbb{Z}$, we have $\mathrm{Pic}(X)\simeq r\mathbb{Z}$. We call the integer $|r|$ the \emph{period} of $X$. Let us fix a generator $\mathcal{J}$ of $\mathrm{Pic}(X)$.  
Now let $\mathcal{L}\in \mathrm{Pic}(X\otimes_k\bar{k})$ be the generator satisfying $\mathcal{J}\otimes_k \bar{k}\simeq \mathcal{L}^{\otimes |r|}$. Assume there is a pure bundle $\mathcal{M}$ of type $\mathcal{L}\in \mathrm{Pic}(X\otimes_k\bar{k})$. We know from Proposition 3.5 that the bundle $\mathcal{M}$ is unique up to isomorphism. We use the notation $\mathcal{M}_{\mathcal{L}}$. It is easy to see that for any invertible sheaf $\mathcal{L}^{\otimes j}\in \mathrm{Pic}(X\otimes_k\bar{k})$ there is an indecomposable pure bundle of type $\mathcal{L}^{\otimes j}$. This is due to the following fact: Let $s=\mathrm{rk}(\mathcal{M}_{\mathcal{L}})$ and consider $(\mathcal{L}^{\oplus s})^{\otimes j}\simeq (\mathcal{L}^{\otimes j})^{\oplus s^j}$. From this one obtains $\mathcal{M}_{\mathcal{L}}^{\otimes j}\otimes_k\bar{k}\simeq (\mathcal{L}^{\oplus s})^{\otimes j}\simeq (\mathcal{L}^{\otimes j})^{\oplus s^j}$. Considering the Krull--Schmidt decomposition of $\mathcal{M}_{\mathcal{L}}^{\otimes j}$ and taking into account that all indecomposable direct summands are isomorphic (see proof of Proposition 3.6 and Remark 3.7), we get a up to isomorphism unique indecomposable locally free sheaf $\mathcal{M}_{\mathcal{L}^{\otimes j}}$ such that $\mathcal{M}_{\mathcal{L}^{\otimes j}}\otimes_k\bar{k}\simeq (\mathcal{L}^{\otimes j})^{\oplus s_j}$, where $s_j$ is the rank of $\mathcal{M}_{\mathcal{L}^{\otimes j}}$.\\ 

With the above notation we can prove the following result.
\begin{thm}
Let $X$ be a proper $k$-scheme with $\mathrm{Pic}(X\otimes_k\bar{k})\simeq \mathbb{Z}$ and period $r$ and let $\mathcal{J}$ be a generator of $\mathrm{Pic}(X)$. Denote by $\mathcal{L}$ the generator of $\mathrm{Pic}(X\otimes_k\bar{k})\simeq \mathbb{Z}$ satisfying $\mathcal{J}\otimes_k \bar{k}\simeq \mathcal{L}^{\otimes r}$. Assume there is an indecomposable pure bundle $\mathcal{M}_{\mathcal{L}}$ of type $\mathcal{L}$. Then all indecomposable $AS$-bundles $\mathcal{E}$ are of the form
\begin{eqnarray*}
\mathcal{J}^{\otimes a}\otimes \mathcal{M}_{\mathcal{L}^{\otimes j}}
\end{eqnarray*}
with unique $a\in\mathbb{Z}$ and $0\leq j\leq r-1$.
\end{thm}
\begin{proof}
Let $\mathcal{E}$ be an arbitrary, not necessarily indecomposable $AS$-bundle and $\pi:X\otimes_k \bar{k}\rightarrow X$ the projection. By assumption, there is an indecomposable pure vector bundle $\mathcal{M}_{\mathcal{L}}$ of type $\mathcal{L}$. We have shown above that there exist up to isomorphism unique indecomposable pure vector bundles of type $\mathcal{L}^{\otimes j}$ for all $j\in \mathbb{Z}$. We denote them by $\mathcal{M}_{\mathcal{L}^{\otimes j}}$. Now consider $\mathcal{M}_{\mathcal{L}^{\otimes j}}$ only for $j=0,...,r-1$. Let $d=\mathrm{lcm}(\mathrm{rk}(\mathcal{M}_{\mathcal{O}_{X_{\bar{k}}}}),\mathrm{rk}(\mathcal{M}_{\mathcal{L}}),...,\mathrm{rk}(\mathcal{M}_{\mathcal{L}^{\otimes (r-1)}}))$ be the least common multiple and consider the vector bundle $\pi^*(\mathcal{E}^{\oplus d})$. Since $\mathcal{E}$ is an $AS$-bundle, the vector bundle $\mathcal{E}^{\oplus d}$ is an $AS$-bundle, too. Therefore $\pi^*(\mathcal{E}^{\oplus d})$ decomposes into a direct sum of invertible sheaves and we find after reordering $\mathrm{mod}$ $r$ that $\pi^*(\mathcal{E}^{\oplus d})$ is isomorphic to
\begin{eqnarray*}
\left(\bigoplus^{s_0}_{i=0}(\mathcal{L}^{\otimes {a_{i_{0}}\cdot r}})^{\oplus d}\right)\oplus \left(\bigoplus^{s_1}_{i=0}(\mathcal{L}^{\otimes {a_{i_{1}}\cdot r}+1})^{\oplus d}\right)\oplus\cdots\\
\oplus \left(\bigoplus^{s_{r-1}}_{i=0}((\mathcal{L}^{\otimes {a_{i_{r-1}}\cdot r}+(r-1)}))^{\oplus d}\right).
\end{eqnarray*}
By definition of $d$, there are $h_j$ such that $h_j\cdot\mathrm{rk}(\mathcal{M}_{\mathcal{L}^{\otimes j}})=d$ for $0\leq j\leq r-1$. Furthermore, the sheaves $\mathcal{M}_{\mathcal{L}^{\otimes j}}$ satisfy $\pi^*\mathcal{M}_{\mathcal{L}^{\otimes j}}\simeq(\mathcal{L}^{\otimes j})^{\oplus d_j}$, where $d_j=\mathrm{rk}(\mathcal{M}_{\mathcal{L}^{\otimes j}})$. Now for the direct summands $(\mathcal{L}^{\otimes {a_{i_j}}\cdot r + j})^{\oplus d}$ we have 
\begin{eqnarray*}
\left(\mathcal{L}^{\otimes a_{i_j}\cdot r + j}\right)^{\oplus d}=\left(\left(\mathcal{L}^{\otimes a_{i_j}\cdot r + j}\right)^{\oplus d_j}\right)^{\oplus h_j}.
\end{eqnarray*} By considering the vector bundle $(\mathcal{J}^{\otimes a_{i_j}}\otimes \mathcal{M}_{\mathcal{L}^{\otimes j}})^{\oplus h_j}$ on $X$, we find
\begin{eqnarray*}
\pi^*\left(\mathcal{J}^{\otimes a_{i_j}}\otimes \mathcal{M}_{\mathcal{L}^{\otimes j}}\right)^{\oplus h_j}\simeq \left(\mathcal{L}^{\otimes {a_{i_j}}\cdot r + j}\right)^{\oplus d}.
\end{eqnarray*} For the vector bundle 
\begin{eqnarray*}
\mathcal{R}=\left(\bigoplus^{r_0}_{i=0}(\mathcal{J}^{\otimes {a_{i_0}}})^{\oplus d}\right)\oplus \left(\bigoplus^{r_1}_{i=0}(\mathcal{J}^{\otimes {a_{i_1}}})\otimes\mathcal{M}_{\mathcal{L}}^{\oplus h_1}\right)\oplus\cdots\\
\oplus \left(\bigoplus^{r_{p-1}}_{i=0}(\mathcal{J}^{\otimes {a_{i_{p-1}}}})\otimes\mathcal{M}_{\mathcal{L}^{\otimes (r-1)}}^{\oplus h_{r-1}}\right)
\end{eqnarray*} we therefore have $\pi^*\mathcal{R}\simeq \pi^*(\mathcal{E}^{\oplus d})$. Applying Proposition 3.4 shows that $\mathcal{E}^{\oplus d}$ is isomorphic to $\mathcal{R}$. Because Krull--Schmidt Theorem holds for vector bundles on $X$, we conclude that $\mathcal{E}$ is isomorphic to the direct sum vector bundles of the form $\mathcal{J}^{\otimes a}\otimes \mathcal{M}_{\mathcal{L}^{\otimes j}}$ with unique $a\in\mathbb{Z}$ and $0\leq j\leq r-1$. Furthermore, since all these bundles are indecomposable by definition, we finally get that all the indecomposable $AS$-bundles are of the form $\mathcal{J}^{\otimes a}\otimes \mathcal{M}_{\mathcal{L}^{\otimes j}}$ with unique $a\in\mathbb{Z}$ and $0\leq j\leq r-1$. This completes the proof.
\end{proof} 
\begin{rema}
\textnormal{If we take the other generator $\mathcal{J}^{\vee}$ of $\mathrm{Pic}(X)$, we must take $\mathcal{L}^{\vee}$ as the generator of $\mathrm{Pic}(X\otimes_k\bar{k})$ to have $\mathcal{J}^{\vee}\otimes_k\bar{k}\simeq (\mathcal{L}^{\vee})^{\otimes r}$. Imitating the proof of Theorem 5.1 get that all indecomposable $AS$-bundles are of the form $(\mathcal{J}^{\otimes a}\otimes \mathcal{M}_{\mathcal{L}^{\otimes j}})^{\vee}$ with unique $a\in\mathbb{Z}$ and $-r+1\leq j\leq 0$. Now Proposition 5.4 below shows that these bundles are isomorphic to the indecomposable $AS$-bundles from Theorem 5.1. Therefore, the isomorphism classes of indecomposable $AS$-bundles do not depend on the choice of the generator of $\mathrm{Pic}(X)$.}
\end{rema}
Now we want to determine the ranks of the sheaves $\mathcal{M}_{\mathcal{L}^{\otimes j}}$. If $H^0(X,\mathcal{O}_X)=k$, we obtain from Proposition 3.3 that $\mathrm{End}(\mathcal{M}_{\mathcal{L}})$ is central simple. One therefore has the notion of the index of $\mathrm{End}(\mathcal{M}_{\mathcal{L}})$. 
\begin{prop}
Let $X$ be a proper $k$-scheme with $H^0(X,\mathcal{O}_X)=k$ and $\mathrm{Pic}(X\otimes_k\bar{k})\simeq \mathbb{Z}$. Denote by $\mathcal{L}$ a generator of $\mathrm{Pic}(X\otimes_k\bar{k})$ and assume there is an indecomposable pure vector bundle $\mathcal{M}_{\mathcal{L}}$ of type $\mathcal{L}$. Then for all $j\in\mathbb{Z}$ 
\begin{eqnarray*}
\mathrm{rk}(\mathcal{M}_{\mathcal{L}^{\otimes j}})=\mathrm{ind}(\mathrm{End}(\mathcal{M}_{\mathcal{L}})^{\otimes j}).
\end{eqnarray*}
\end{prop}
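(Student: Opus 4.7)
The strategy is to identify both sides of the equality with the degree of the central division algebra $\mathrm{End}(\mathcal{M}_{\mathcal{L}^{\otimes j}})$, after showing this algebra represents the Brauer class of $\mathrm{End}(\mathcal{M}_{\mathcal{L}})^{\otimes j}$.

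First I would verify that $\mathrm{End}(\mathcal{M}_{\mathcal{L}^{\otimes j}})$ is a central division algebra over $k$. The hypothesis $H^0(X,\mathcal{O}_X)=k$ combined with flat base change gives $H^0(X_{\bar{k}},\mathcal{O}_{X_{\bar{k}}})=\bar{k}$, which is all that is really used in the proof of Proposition 3.2; thus that proposition applies to the indecomposable absolutely rank-one-isotypical sheaf $\mathcal{M}_{\mathcal{L}^{\otimes j}}$. Equation (3) then yields $\mathrm{rk}(\mathcal{M}_{\mathcal{L}^{\otimes j}})=\mathrm{deg}(\mathrm{End}(\mathcal{M}_{\mathcal{L}^{\otimes j}}))$, and since division algebras satisfy $\mathrm{deg}=\mathrm{ind}$, the rank coincides with $\mathrm{ind}(\mathrm{End}(\mathcal{M}_{\mathcal{L}^{\otimes j}}))$. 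The claim thus reduces to showing that $\mathrm{End}(\mathcal{M}_{\mathcal{L}^{\otimes j}})$ and $\mathrm{End}(\mathcal{M}_{\mathcal{L}})^{\otimes j}$ represent the same class in $\mathrm{Br}(k)$.

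For this, the discussion preceding Theorem 5.1 already gives $\mathcal{M}_{\mathcal{L}}^{\otimes j}\otimes_k \bar{k}\simeq (\mathcal{L}^{\otimes j})^{\oplus s^j}$ with $s=\mathrm{rk}(\mathcal{M}_{\mathcal{L}})$, so $\mathcal{M}_{\mathcal{L}}^{\otimes j}$ is absolutely rank-one-isotypical of type $\mathcal{L}^{\otimes j}$. Applying Proposition 3.5 together with Remark 3.6, $\mathcal{M}_{\mathcal{L}}^{\otimes j}$ is a direct sum of copies of $\mathcal{M}_{\mathcal{L}^{\otimes j}}$, hence $\mathrm{End}(\mathcal{M}_{\mathcal{L}}^{\otimes j})\simeq M_n(\mathrm{End}(\mathcal{M}_{\mathcal{L}^{\otimes j}}))$ is Brauer equivalent to $\mathrm{End}(\mathcal{M}_{\mathcal{L}^{\otimes j}})$. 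The main step is then to establish the isomorphism $\mathrm{End}(\mathcal{M}_{\mathcal{L}})^{\otimes j}\simeq \mathrm{End}(\mathcal{M}_{\mathcal{L}}^{\otimes j})$: the natural $k$-algebra homomorphism from left to right exists for any locally free sheaf, both algebras are central simple of $k$-dimension $s^{2j}$, and after flat base change to $\bar{k}$ it becomes the standard Kronecker isomorphism $M_s(\bar{k})^{\otimes j}\simeq M_{s^j}(\bar{k})$. Faithfully flat descent therefore forces the original map to be an isomorphism as well.

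Combining these steps, $\mathrm{End}(\mathcal{M}_{\mathcal{L}^{\otimes j}})$ is the unique central division algebra representing the Brauer class of $\mathrm{End}(\mathcal{M}_{\mathcal{L}})^{\otimes j}$, whence $\mathrm{deg}(\mathrm{End}(\mathcal{M}_{\mathcal{L}^{\otimes j}}))=\mathrm{ind}(\mathrm{End}(\mathcal{M}_{\mathcal{L}})^{\otimes j})$ and the desired rank formula follows. For $j<0$ the tensor power is to be read in the Brauer group and the argument goes through after replacing $\mathcal{M}_{\mathcal{L}}$ by $\mathcal{M}_{\mathcal{L}}^{\vee}\simeq \mathcal{M}_{\mathcal{L}^{\otimes -1}}$ (an instance of Proposition 3.4) and using $\mathrm{End}(\mathcal{M}_{\mathcal{L}}^{\vee})\simeq \mathrm{End}(\mathcal{M}_{\mathcal{L}})^{\mathrm{op}}$. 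I expect the principal obstacle to be this last identification of algebras: verifying that the natural $k$-algebra map $\mathrm{End}(\mathcal{M}_{\mathcal{L}})^{\otimes j}\to \mathrm{End}(\mathcal{M}_{\mathcal{L}}^{\otimes j})$ is an isomorphism; everything else is a routine application of the Krull--Schmidt and Brauer-equivalence machinery developed in Section 3.
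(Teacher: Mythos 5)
Your proof is correct and follows essentially the same route as the paper's: decompose $\mathcal{M}_{\mathcal{L}}^{\otimes j}$ into copies of $\mathcal{M}_{\mathcal{L}^{\otimes j}}$ via Krull--Schmidt and Propositions 3.4/3.5, identify the rank with the degree of the division algebra $\mathrm{End}(\mathcal{M}_{\mathcal{L}^{\otimes j}})$ using equation (3), and pass through the identification $\mathrm{End}(\mathcal{M}_{\mathcal{L}})^{\otimes j}\simeq\mathrm{End}(\mathcal{M}_{\mathcal{L}}^{\otimes j})$. You are in fact more careful than the paper on the one delicate point: the paper simply asserts this last isomorphism, whereas you justify it by the natural map plus base change and descent, and you also treat the case $j<0$ explicitly.
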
 
\begin{proof}
Let $\pi:X\otimes_k \bar{k}\rightarrow X$ denote the projection. For the bundle $\mathcal{M}_{\mathcal{L}}^{\otimes j}$ we have
\begin{eqnarray*}
\pi^*\mathcal{M}_{\mathcal{L}}^{\otimes j}\simeq (\mathcal{L}^{\oplus \mathrm{rk}(\mathcal{M}_{\mathcal{L}})})^{\otimes j}\simeq (\mathcal{L}^{\otimes j})^{\oplus \mathrm{rk}(\mathcal{M}_{\mathcal{L}})^j}.
\end{eqnarray*} This shows that $\mathcal{M}_{\mathcal{L}}^{\otimes j}$ is pure of type $\mathcal{L}^{\otimes j}$. In general, $\mathcal{M}_{\mathcal{L}}^{\otimes j}$ need not to be indecomposable. Since $H^0(X,\mathcal{O}_X)=k$, Proposition 3.3 implies that the endomorphism algebra $\mathrm{End}(\mathcal{M}_{\mathcal{L}^{\otimes j}})$ is a central division algebra. Applying the Krull--Schmidt Theorem for $\mathcal{M}_{\mathcal{L}}^{\otimes j}$ on $X$ we get a decomposition 
\begin{eqnarray*}
\mathcal{M}_{\mathcal{L}}^{\otimes j}\simeq \bigoplus_{i=1}^m\mathcal{E}_i,
\end{eqnarray*} where all $\mathcal{E}_i$ are indecomposable. After base change to the algebraic closure we have
\begin{eqnarray*}
(\mathcal{L}^{\oplus \mathrm{rk}(\mathcal{M}_{\mathcal{L}})})^{\otimes j}\simeq (\mathcal{M}_{\mathcal{L}}^{\otimes j})\otimes_k\bar{k}\simeq \bigoplus_{i=1}^m\mathcal{E}_i\otimes_k \bar{k}.
\end{eqnarray*} 
Applying Krull--Schmidt Theorem for any $\mathcal{E}_{i}$ on $X\otimes_{k} \bar{k}$ shows that $\mathcal{E}_{i}$ are pure of type $\mathcal{L}^{\otimes j}$. According to Proposition 3.5 and Remark 3.7, all $\mathcal{E}_i$ are isomorphic to $\mathcal{M}_{\mathcal{L}^{\otimes j}}$. Proposition 3.6 implies that $\mathrm{End}(\mathcal{M}_{\mathcal{L}}^{\otimes j})$ is a matrix algebra over the central division algebra $\mathrm{End}(\mathcal{M}_{\mathcal{L}^{\otimes j}})$. By (3), the rank of $\mathcal{M}_{\mathcal{L}^{\otimes j}}$ equals $\mathrm{deg}(\mathrm{End}(\mathcal{M}_{\mathcal{L}^{\otimes j}}))$. But $\mathrm{deg}(\mathrm{End}(\mathcal{M}_{\mathcal{L}^{\otimes j}}))=\mathrm{ind}(\mathrm{End}(\mathcal{M}_{\mathcal{L}}^{\otimes j}))$. Now it is an easy exercise to show $\mathrm{End}(\mathcal{M}_{\mathcal{L}}^{\otimes j})\simeq \mathrm{End}(\mathcal{M}_{\mathcal{L}})^{\otimes j}$ as central simple algebras. Finally, it follows $\mathrm{rk}(\mathcal{M}_{\mathcal{L}^{\otimes j}})=\mathrm{deg}(\mathrm{End}(\mathcal{M}_{\mathcal{L}^{\otimes j}}))=\mathrm{ind}(\mathrm{End}(\mathcal{M}_{\mathcal{L}})^{\otimes j}).$
\end{proof}
Let $X$ be as in Theorem 5.1 and consider the sequence $(\mathrm{rk}(\mathcal{M}_{\mathcal{L}^{\otimes j}}))_{j\in \mathbb{Z}}$. Since $\mathrm{Pic}(X\otimes_k \bar{k})$ is infinite cyclic one has a periodicity in the above sequence with respect to the period $r$ of $X$.
\begin{prop}
Let $X$ be a proper $k$-scheme of period $r$ with $\mathrm{Pic}(X\otimes_k\bar{k})\simeq \mathbb{Z}$ and $H^0(X,\mathcal{O}_X)=k$ and let $\mathcal{L}$, $\mathcal{J}$ and $\mathcal{M}_{\mathcal{L}^{\otimes j}}$ be as in Theorem 5.1. Then for all $j\in\mathbb{Z}$ the following hold:
\begin{itemize}
      \item[\bf (i)] $\mathcal{M}^{\vee}_{\mathcal{L}^{\otimes j}}\simeq \mathcal{M}_{\mathcal{L}^{\otimes (-j)}}$.
      \item[\bf (ii)] $\mathcal{M}_{\mathcal{L}^{\otimes (j+ar)}}\simeq \mathcal{J}^{\otimes a}\otimes \mathcal{M}_{\mathcal{L}^{\otimes j}}$.
			\end{itemize} In particular one has $\mathrm{rk}(\mathcal{M}_{\mathcal{L}^{\otimes j}})=\mathrm{rk}(\mathcal{M}_{\mathcal{L}^{\otimes (-j)}})$ and $\mathrm{rk}(\mathcal{M}_{\mathcal{L}^{\otimes (j+ar)}})=\mathrm{rk}(\mathcal{M}_{\mathcal{L}^{\otimes j}})$.
\end{prop}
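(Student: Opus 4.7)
My plan is to prove each isomorphism by exhibiting both sides as indecomposable absolutely isotypical sheaves of the same type, and then to invoke the uniqueness statement in Proposition 3.4. The two rank equalities at the end of the proposition then drop out by simply counting ranks on each side of the resulting isomorphisms. Throughout the argument I would rely on two standard facts: dualization and tensoring with an invertible sheaf are autoequivalences of the category of locally free sheaves of finite rank on $X$ (hence preserve indecomposability), and both operations commute with the base change $-\otimes_k\bar{k}$.

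For (i), set $d_j=\mathrm{rk}(\mathcal{M}_{\mathcal{L}^{\otimes j}})$. Dualizing the defining isomorphism $\mathcal{M}_{\mathcal{L}^{\otimes j}}\otimes_k\bar{k}\simeq(\mathcal{L}^{\otimes j})^{\oplus d_j}$ after base change gives
\[
\mathcal{M}^{\vee}_{\mathcal{L}^{\otimes j}} \otimes_k \bar{k} \simeq \bigl((\mathcal{L}^{\otimes j})^{\oplus d_j}\bigr)^{\vee} \simeq (\mathcal{L}^{\otimes (-j)})^{\oplus d_j}.
\]
Together with the indecomposability of $\mathcal{M}^{\vee}_{\mathcal{L}^{\otimes j}}$, this shows that $\mathcal{M}^{\vee}_{\mathcal{L}^{\otimes j}}$ is an indecomposable absolutely isotypical sheaf of type $\mathcal{L}^{\otimes(-j)}$, exactly like $\mathcal{M}_{\mathcal{L}^{\otimes(-j)}}$. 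Proposition 3.4 then produces the desired isomorphism, and comparing ranks on both sides yields $\mathrm{rk}(\mathcal{M}_{\mathcal{L}^{\otimes j}})=\mathrm{rk}(\mathcal{M}_{\mathcal{L}^{\otimes(-j)}})$.

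For (ii), I would argue in the same way. Since $\mathcal{J}\otimes_k\bar{k}\simeq\mathcal{L}^{\otimes r}$, base change yields
\[
\bigl(\mathcal{J}^{\otimes a}\otimes\mathcal{M}_{\mathcal{L}^{\otimes j}}\bigr) \otimes_k \bar{k} \simeq \mathcal{L}^{\otimes ar} \otimes (\mathcal{L}^{\otimes j})^{\oplus d_j} \simeq (\mathcal{L}^{\otimes(j+ar)})^{\oplus d_j},
\]
so $\mathcal{J}^{\otimes a}\otimes\mathcal{M}_{\mathcal{L}^{\otimes j}}$ is indecomposable and absolutely isotypical of type $\mathcal{L}^{\otimes(j+ar)}$. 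Proposition 3.4 again identifies it with $\mathcal{M}_{\mathcal{L}^{\otimes(j+ar)}}$, and counting ranks delivers the second rank equality. There is no real obstacle here: once one is willing to invoke Proposition 3.4, the entire argument is a matter of unpacking definitions and verifying that the sheaves constructed from $\mathcal{M}_{\mathcal{L}^{\otimes j}}$ by dualization and by twisting with $\mathcal{J}^{\otimes a}$ remain indecomposable and have the correct absolutely isotypical type.
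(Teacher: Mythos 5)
Your proof is correct and follows essentially the same route as the paper: both arguments exhibit $\mathcal{M}^{\vee}_{\mathcal{L}^{\otimes j}}$ and $\mathcal{J}^{\otimes a}\otimes\mathcal{M}_{\mathcal{L}^{\otimes j}}$ as indecomposable absolutely isotypical sheaves of type $\mathcal{L}^{\otimes(-j)}$ and $\mathcal{L}^{\otimes(j+ar)}$ respectively, and then conclude by the uniqueness in Proposition 3.4. Your version is, if anything, slightly more explicit than the paper's in spelling out why indecomposability is preserved under dualization and twisting.
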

\begin{proof}
For the invertible sheaf $\mathcal{L}$ there is are indecomposable vector bundles $\mathcal{M}_{\mathcal{L}}$ and $\mathcal{M}_{\mathcal{L}^{\vee}}$. Since $\mathcal{M}_{\mathcal{L}}$ is pure of type $\mathcal{L}$, we conclude that $\mathcal{M}^{\vee}_{\mathcal{L}}$ is pure of type $\mathcal{L}^{\vee}$. According to Proposition 3.5, $\mathcal{M}^{\vee}_{\mathcal{L}}$ is isomorphic to $\mathcal{M}_{\mathcal{L}^{\vee}}$. The same argument shows $\mathcal{M}^{\vee}_{\mathcal{L}^{\otimes j}}\simeq \mathcal{M}_{\mathcal{L}^{\otimes (-j)}}$. This proves (i). To prove (ii), we apply the same argument. For this, let $\mathcal{J}$ be the generator of $\mathrm{Pic}(X)$ and note that by the definition of the period of $X$ we have $\mathcal{J}\otimes_k\bar{k}\simeq\mathcal{L}^{\otimes r}$. Now consider the bundle $\mathcal{M}_{\mathcal{L}^{\otimes (j+ar)}}$ and note that it is pure of type $\mathcal{L}^{\otimes (j+ar)}$. The vector bundle $\mathcal{J}^{\otimes a}\otimes \mathcal{M}_{\mathcal{L}^{\otimes j}}$ is indecomposable and also pure of type $\mathcal{L}^{\otimes (j+ar)}$. Again with Proposition 3.5 we conclude $\mathcal{M}_{\mathcal{L}^{\otimes (j+ar)}}\simeq \mathcal{J}^{\otimes a}\otimes \mathcal{M}_{\mathcal{L}^{\otimes j}}$. This completes the proof.
\end{proof}

\begin{rema}
\textnormal{Under the assumption of the above proposition we see that the sequence $(\mathrm{rk}(\mathcal{M}_{\mathcal{L}^{\otimes j}}))_{j\in \mathbb{Z}}$ is completely determined by the tupel $(1,\mathrm{rk}(\mathcal{M}_{\mathcal{L}}),...,\mathrm{rk}(\mathcal{M}_{\mathcal{L}^{\otimes r-1}}),1)$. Note that $\mathrm{rk}(\mathcal{M}_{\mathcal{O}_{X\otimes_k\bar{k}}})=\mathrm{rk}(\mathcal{M}_{\mathcal{L}^{\otimes r}})=1$, since both, the structure sheaf and $\mathcal{L}^{\otimes r}$, descent.}
\end{rema}
In view of Proposition 5.4 and Remark 5.5 we define the \emph{AS-type} of $X$ to be the $r+1$-tupel $(1,\mathrm{rk}(\mathcal{M}_{\mathcal{L}}),...,\mathrm{rk}(\mathcal{M}_{\mathcal{L}^{\otimes r-1}}),1)$. It is interesting to study the relation between the $AS$-types of proper $k$-schemes $X$ and $Y$ which are related by certain morphisms $f:X\rightarrow Y$. Moreover, for the central simple $k$-algebras $\mathrm{End}(\mathcal{M}_{\mathcal{L}})$ one also has the notion of period as invariant and it is reasonable to study which geometric properties of $X$ are reflected by the $AS$-type and the periods of $\mathrm{End}(\mathcal{M}_{\mathcal{L}})$.


\section{AS-bundles on Brauer--Severi varieties}
In this section we describe the structure of $AS$-bundles on Brauer--Severi varieties. Furthermore, we study the $AS$-type and find a relation between the $AS$-types of Brauer equivalent and birational Brauer--Severi varieties.\\

As mentioned in Section 2, the $n$-dimensional Brauer--Severi varieties over $k$ are in one-to-one correspondence with central simple $k$-algebras of degree $n+1$. 
As mentioned above, the period of a Brauer--Severi variety $X$ is the order of the corresponding central simple $k$-algebra $A$ in $\mathrm{Br}(k)$. But there is also a geometric interpretation of the period. Hochschild--Serre spectral sequence for Galois coverings applied to a Brauer--Severi variety $X$ yields the following exact sequence (see [4]):
\begin{center}
$0\longrightarrow \mathrm{Pic}(X)\xrightarrow{deg} \mathbb{Z}\longrightarrow\mathrm{Br}(k)$.
\end{center} 
A Theorem due to Lichtenbaum (see [15], Theorem 5.4.10.) states that the boundary map $\delta:\mathbb{Z}\rightarrow \mathrm{Br}(k)$ is given by sending $1$ to the class of $X$ in $\mathrm{Br}(k)$. Here by class of $X$ we mean that of the corresponding central simple $k$-algebra. It follows that $\mathrm{Pic}(X)$ is identified with some subgroup $r\mathbb{Z}$ of $\mathbb{Z}$. So $|r|$ is the order of $A$ in $\mathrm{Br}(k)$, where $A$ is the central simple $k$-algebra corresponding to $X$. Hence the period of $X$ can also be thought of as the smallest positive integer $s$ such that $\mathcal O_X(s)$ exists in $\mathrm{Pic}(X)$. Therefore, the period of a Brauer--Severi variety equals the period as defined in Section 5. For further investigations we need the following result contained in [33], Theorem 5.5. Denote by $(m,n)$ the greatest common divisor of the natural numbers $m$ and $n$.
\begin{thm}
Let $A$ be a central simple $k$-algebra of index $i$. Then for $r\geq 0$ one has:
\begin{itemize}
      \item[\bf (i)] The index of $A^{\otimes r}$ divides $(\binom{i}{r}, i)$.
      \item[\bf (ii)] Suppose $i$ and $r$ are coprime. Then $A^{\otimes r}$ has index $i$.
			\item[\bf (iii)] Let $e$ be $(r,i)$. Then $A^{\otimes r}$ has index dividing $i/e$.
    \end{itemize}
\end{thm}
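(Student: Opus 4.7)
The proof rests on two elementary facts that I would set out first: (a) $\mathrm{ind}(A^{\otimes r})$ divides $\mathrm{ind}(A)=i$, because any separable splitting field of $A$ also splits every tensor power of $A$; and (b) the period $p=\mathrm{per}(A)$ divides the index $i$ (as recalled in Section~2). These already yield the ``easy half'' of (i) (the divisibility by $i$) and reduce (ii) to the reverse divisibility $i\mid\mathrm{ind}(A^{\otimes r})$.

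For (ii), suppose $\gcd(r,i)=1$; since $p\mid i$ one also has $\gcd(r,p)=1$, so B\'ezout yields $s\in\mathbb{Z}$ with $rs\equiv 1\pmod{p}$. Then $[A^{\otimes rs}]=rs[A]=[A]$ in $\mathrm{Br}(k)$, hence $\mathrm{ind}(A^{\otimes rs})=i$. Applying (a) to $A^{\otimes r}$ gives $\mathrm{ind}((A^{\otimes r})^{\otimes s})\mid\mathrm{ind}(A^{\otimes r})$, i.e.\ $i\mid\mathrm{ind}(A^{\otimes r})$, and together with the easy half this forces equality.

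The crux, and the main obstacle, is the non-trivial divisibility $\mathrm{ind}(A^{\otimes r})\mid\binom{i}{r}$ in (i). My plan is to construct, for the central division algebra $D\sim A$ of degree $i$, a central simple $k$-algebra $\lambda^r D$ of degree $\binom{i}{r}$ whose Brauer class equals $r[D]=[A^{\otimes r}]$; then $\mathrm{ind}(A^{\otimes r})\mid\deg(\lambda^r D)=\binom{i}{r}$ is immediate. The construction mimics the exterior power of a module: over $\bar{k}$ one has $D\otimes_k\bar{k}\cong M_i(\bar{k})$ acting on $\bar{k}^i$, the antisymmetrization idempotent on $(\bar{k}^i)^{\otimes r}$ cuts out $\Lambda^r\bar{k}^i\cong\bar{k}^{\binom{i}{r}}$, and this produces a representation of $D^{\otimes r}\otimes\bar{k}$ whose image is $M_{\binom{i}{r}}(\bar{k})$. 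The real work is to descend this Galois-equivariantly to a $k$-algebra $\lambda^r D$; this is the technically delicate step.

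Finally, I would deduce (iii) from (i) and (ii) by primary decomposition. Write $A\sim A_1\otimes\cdots\otimes A_m$ with $A_j$ of $p_j$-primary index $i_j=p_j^{a_j}$ and the $i_j$ pairwise coprime, so $\mathrm{ind}(A^{\otimes r})=\prod_j\mathrm{ind}(A_j^{\otimes r})$. Fix a prime $p=p_j$ and set $a=a_j$, $v=v_p(r)$. If $v\geq a$, then $p^a\mid r$ and, as $\mathrm{per}(A_j)\mid p^a$, $A_j^{\otimes r}$ is split. If $v<a$, write $r=p^vr''$ with $\gcd(r'',p)=1$; since $\mathrm{ind}(A_j^{\otimes p^v})$ is a power of $p$, (ii) gives $\mathrm{ind}(A_j^{\otimes r})=\mathrm{ind}(A_j^{\otimes p^v})$, and (i) further forces this to divide $\gcd\!\bigl(\binom{p^a}{p^v},p^a\bigr)=p^{a-v}$, the last equality being Kummer's computation of the $p$-adic valuation of $\binom{p^a}{p^v}$. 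Since $v_p(i/e)=a-\min(v,a)$, multiplying these bounds over all $j$ yields $\mathrm{ind}(A^{\otimes r})\mid i/e$.
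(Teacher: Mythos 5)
The paper does not prove this statement at all: Theorem 6.1 is quoted verbatim from Saltman [32, Theorem 5.5], so there is no in-paper argument to compare yours against. Judged on its own terms, your reductions are sound where they are complete: fact (a) and the B\'ezout argument give (ii) correctly, and your derivation of (iii) from (i) and (ii) via the primary decomposition $A\sim A_1\otimes\cdots\otimes A_m$, multiplicativity of the index over coprime-index factors, and Kummer's computation $v_p\bigl(\binom{p^a}{p^v}\bigr)=a-v$ is a complete and correct argument (and is essentially how Saltman deduces his part (iii) as well).

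The genuine gap is exactly where you flag it: the divisibility $\mathrm{ind}(A^{\otimes r})\mid\binom{i}{r}$ in (i) carries all the content of the theorem, and the construction of the algebra $\lambda^r D$ of degree $\binom{i}{r}$ in the class $r[D]$ is only described over $\bar{k}$, with the descent deferred as ``technically delicate.'' As written this is not a proof, and the proposed route (construct the representation on $\Lambda^r\bar{k}^i$ and then descend it Galois-equivariantly) is awkward, because the idempotent cutting out $\Lambda^r\bar{k}^i$ inside $(\bar{k}^i)^{\otimes r}$ lives in $\mathrm{End}(\bar k^i)^{\otimes r}$, not in the image of $D^{\otimes r}$, and one must check that the relevant data can be chosen Galois-stably. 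The standard way to close this gap avoids descent entirely: the symmetric group $S_r$ acts on $A^{\otimes r}$ by permuting factors, and for $A$ central simple these automorphisms are inner, implemented by the Goldman elements $g_\sigma\in A^{\otimes r}$; setting $s_r=\sum_{\sigma\in S_r}\mathrm{sgn}(\sigma)g_\sigma$, the algebra $\lambda^r A:=s_r\,A^{\otimes r}s_r$ is already defined over $k$, is central simple of degree $\binom{\deg A}{r}$, and is Brauer-equivalent to $A^{\otimes r}$ (this is the $\lambda$-operation of Saltman and of Knus--Merkurjev--Rost--Tignol, \S 10.A). Until that construction (or an equivalent one) is actually carried out, part (i) — and hence the bound $\gcd\bigl(\binom{i}{r},i\bigr)$ that your proof of (iii) leans on — remains unproved.
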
 
Notice that for Brauer--Severi varieties $X$ over $k$ one has $H^0(X,\mathcal{O}_X)=k$ and $\mathrm{Pic}(X)\simeq \mathbb{Z}$. Since $X\otimes_k \bar{k}\simeq \mathbb{P}^n$, one also has $\mathrm{Pic}(X\otimes_k \bar{k})\simeq \mathbb{Z}$. To apply Theorem 5.1 we have to show that for the generator $\mathcal{O}_{\mathbb{P}^n}(1)\in \mathrm{Pic}(X\otimes_k \bar{k})$ there exists an indecomposable pure vector bundle of type $\mathcal{O}_{\mathbb{P}^n}(1)$. 
\begin{prop}
Let $X$ be a $n$-dimensional Brauer--Severi variety over $k$ and $\mathcal{O}_{\mathbb{P}^n}(1)$ the generator of $\mathrm{Pic}(X\otimes_k \bar{k})\simeq \mathbb{Z}$. Then there exists an indecomposable pure vector bundle $\mathcal{W}_1$ on $X$ of type $\mathcal{O}_{\mathbb{P}^n}(1)$.
\end{prop}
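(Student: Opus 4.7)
\smallskip

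The plan is to exhibit $\mathcal{W}_1$ as a summand of a sheaf already constructed in Section~3, namely by dualising the absolutely isotypical sheaf $\mathcal{V}$ of type $\mathcal{O}_{\mathbb{P}^n}(-1)$ coming from the twisted Euler sequence (2). Since a Brauer--Severi variety is proper and geometrically integral, all hypotheses of Propositions 3.2--3.5 are at our disposal.

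First I would consider the rank-$(n{+}1)$ locally free sheaf $\mathcal{V}^{\vee}$ on $X$. Because dualising commutes with base change, the isomorphism $\mathcal{V}\otimes_k\bar{k}\simeq \mathcal{O}_{\mathbb{P}^n}(-1)^{\oplus(n+1)}$ established in Section~3 yields
\begin{eqnarray*}
\mathcal{V}^{\vee}\otimes_k\bar{k}\simeq \mathcal{O}_{\mathbb{P}^n}(1)^{\oplus(n+1)},
\end{eqnarray*}
so $\mathcal{V}^{\vee}$ is itself absolutely rank-one-isotypical of type $\mathcal{O}_{\mathbb{P}^n}(1)$. It is however not indecomposable (its rank is $n+1$ while the type has rank~$1$), so a further step is required.

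Next I would apply the Krull--Schmidt Theorem on $X$ to decompose $\mathcal{V}^{\vee}\simeq \bigoplus_{i=1}^{m}\mathcal{F}_i$ into indecomposables, and then exactly mimic the argument from the proof of Proposition 3.5 and Remark~3.6: base changing the decomposition to $\bar{k}$ and comparing with $\mathcal{O}_{\mathbb{P}^n}(1)^{\oplus(n+1)}$, the Krull--Schmidt Theorem on $X\otimes_k\bar{k}$ forces each $\mathcal{F}_i\otimes_k\bar{k}$ to be a direct sum of copies of $\mathcal{O}_{\mathbb{P}^n}(1)$. Hence every indecomposable summand $\mathcal{F}_i$ is absolutely rank-one-isotypical of type $\mathcal{O}_{\mathbb{P}^n}(1)$, and setting $\mathcal{W}_1:=\mathcal{F}_1$ gives the required sheaf.

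There is no real obstacle in this plan: the only two ingredients are (i) that the dual of an absolutely rank-one-isotypical sheaf is absolutely rank-one-isotypical of the dual type, which follows from the compatibility of $(-)^{\vee}$ with base change, and (ii) that an indecomposable summand of an absolutely rank-one-isotypical sheaf is again of the same type, which is the argument already spelled out in the proof of Proposition~3.5. The construction of $\mathcal{V}$ via the twisted Euler sequence (sequence (2) in Section~3) is what makes this work uniformly for \emph{every} Brauer--Severi variety, without needing to know a priori whether $X$ admits a $k$-rational point or splits over a small extension.
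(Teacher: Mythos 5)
Your proposal is correct and is essentially identical to the paper's own proof: dualise the sheaf $\mathcal{V}$ from the twisted Euler sequence (2) to get an absolutely rank-one-isotypical sheaf of type $\mathcal{O}_{\mathbb{P}^n}(1)$, take its Krull--Schmidt decomposition on $X$, and invoke the argument of Proposition 3.5 and Remark 3.6 to see that any indecomposable summand is again of type $\mathcal{O}_{\mathbb{P}^n}(1)$. One small inaccuracy: your parenthetical claim that $\mathcal{V}^{\vee}$ is \emph{not} indecomposable because its rank $n+1$ exceeds the rank of the type is false in general (when the algebra corresponding to $X$ is a division algebra of degree $n+1$, $\mathrm{End}(\mathcal{V}^{\vee})$ is a division algebra and $\mathcal{V}^{\vee}$ is itself indecomposable), but this does not affect the argument, since the Krull--Schmidt step handles both cases uniformly.
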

\begin{proof}
We have seen that the vector bundle $\mathcal{V}$ of Example 3.2
satisfies $\pi^*\mathcal{V}\simeq \mathcal{O}_{\mathbb{P}^n}(-1)^{\oplus(n+1)}$, where $\pi$ denotes the projection $\pi:X\otimes_k\bar{k}\rightarrow X$. Hence $\pi^*\mathcal{V}^{\vee}$ is isomorphic to $\mathcal{O}_{\mathbb{P}^n}(1)^{\oplus(n+1)}$ and therefore $\mathcal{V}^{\vee}$ is pure of type $\mathcal{O}_{\mathbb{P}^n}(1)$. Let us denote the vector bundle $\mathcal{V}^{\vee}$ by $\mathcal{W}$. We decompose $\mathcal{W}$ according to the Krull--Schmidt Theorem and write $\mathcal{W}\simeq \bigoplus^{m}_{i=1}\mathcal{W}_{i}$. Since all the $\mathcal{W}_i$ have to be isomorphic (see Remark 3.7), it follows $\mathcal{W}\simeq \mathcal{W}^{\oplus m}_1$, where $\mathcal{W}_1$ is unique up to isomorphism. From Krull--Schmidt Theorem it follows that $\mathcal{W}_1$ is also pure of type $\mathcal{O}_{\mathbb{P}^n}(1)$. This completes the proof.
\end{proof}
We saw that the period $p$ of a Brauer--Severi variety $X$ can be thought of as the smallest positive integer $r$ such that $\mathcal O_X(r)$ exists in $\mathrm{Pic}(X)$. Hence the period defined for Brauer--Severi varieties equals the period defined for arbitrary proper $k$-schemes in Section 5. Now we write $\mathcal{W}_j$ for the vector bundles $\mathcal{M}_{\mathcal{L}^{\otimes j}}$ of Section 5 with $\mathcal{L}=\mathcal{O}_{\mathbb{P}^n}(1)$. Then Theorem 5.1 applies and we obtain:
\begin{cor}
Let $X$ be a Brauer--Severi variety over a field $k$ of period $p$. Then all $AS$-bundles $\mathcal{E}$ are of the form
\begin{eqnarray*}
\mathcal{E}\simeq \bigoplus_{j=0}^{p-1}\left(\bigoplus_{i=0}^{r_j}  \mathcal{O}_X(a_{i_j}p)\otimes \mathcal{W}_j\right)
\end{eqnarray*}
with unique $a_{i_j}\in\mathbb{Z}$ and $r_j> 0$.
\end{cor}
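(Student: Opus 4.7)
The plan is to apply Theorem 5.1 directly, with the Brauer--Severi variety $X$ playing the role of the proper $k$-scheme. First, I would verify that the hypotheses of that theorem hold. Since $X \otimes_k \bar{k} \simeq \mathbb{P}^n$, we have $\mathrm{Pic}(X\otimes_k \bar{k}) \simeq \mathbb{Z}$ with generator $\mathcal{L} = \mathcal{O}_{\mathbb{P}^n}(1)$. As noted in the paragraph preceding Theorem 6.1, the period of $X$ as a Brauer--Severi variety coincides with its period in the sense of Section 5: it is the smallest positive integer $r$ such that $\mathcal{L}^{\otimes r}$ descends, namely $r = p$. Thus the generator $\mathcal{J}$ of $\mathrm{Pic}(X)$ is $\mathcal{O}_X(p)$, and $\mathcal{J}\otimes_k \bar{k} \simeq \mathcal{L}^{\otimes p}$. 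The remaining hypothesis of Theorem 5.1 is the existence of an indecomposable absolutely isotypical sheaf of type $\mathcal{L}$; this is provided by Proposition 6.2, which constructs precisely such a sheaf $\mathcal{W}_1 = \mathcal{M}_{\mathcal{L}}$.

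With the hypotheses in place, Theorem 5.1 gives that every indecomposable $AS$-bundle on $X$ has the form
\begin{eqnarray*}
\mathcal{J}^{\otimes a} \otimes \mathcal{M}_{\mathcal{L}^{\otimes j}} \;\simeq\; \mathcal{O}_X(ap) \otimes \mathcal{W}_j
\end{eqnarray*}
for a unique $a \in \mathbb{Z}$ and a unique $0 \leq j \leq p-1$, where I adopt the notation $\mathcal{W}_j := \mathcal{M}_{\mathcal{L}^{\otimes j}}$ introduced just before the corollary. (The sheaves $\mathcal{M}_{\mathcal{L}^{\otimes j}}$ exist and are unique by the discussion preceding Theorem 5.1, applied to $\mathcal{W}_1$.)

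To pass from indecomposable $AS$-bundles to arbitrary ones, I would invoke the Krull--Schmidt theorem for coherent sheaves on proper $k$-schemes (cited in the introduction). Any $AS$-bundle $\mathcal{E}$ decomposes uniquely, up to order, as a direct sum of indecomposables; since $AS$-bundles are stable under taking direct summands (the property of splitting into line bundles after base change to $\bar{k}$ is inherited by summands), each summand is itself an indecomposable $AS$-bundle and therefore of the form $\mathcal{O}_X(a_{i_j}p)\otimes \mathcal{W}_j$. Collecting the summands by their type index $j \in \{0,1,\dots,p-1\}$ and indexing the occurrences within each type by $i$ produces exactly the expression
\begin{eqnarray*}
\mathcal{E} \;\simeq\; \bigoplus_{j=0}^{p-1}\left(\bigoplus_{i=0}^{r_j} \mathcal{O}_X(a_{i_j}p) \otimes \mathcal{W}_j\right),
\end{eqnarray*}
with the uniqueness of the exponents $a_{i_j}$ following from the uniqueness statement of Theorem 5.1 combined with Krull--Schmidt.

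Since the corollary is essentially a reformulation of Theorem 5.1 in Brauer--Severi notation, I do not anticipate any substantive obstacle; the only care needed is to match the abstract data $(\mathcal{L},\mathcal{J},r,\mathcal{M}_{\mathcal{L}})$ of Section 5 with its concrete counterpart $(\mathcal{O}_{\mathbb{P}^n}(1),\mathcal{O}_X(p),p,\mathcal{W}_1)$ on $X$, and to verify that Proposition 6.2 indeed supplies the required absolutely isotypical sheaf.
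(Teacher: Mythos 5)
Your proposal is correct and follows essentially the same route as the paper: the paper likewise verifies that $\mathrm{Pic}(X\otimes_k\bar{k})\simeq\mathbb{Z}$ with the period of $X$ as a Brauer--Severi variety agreeing with the period of Section 5, supplies the required indecomposable absolutely isotypical sheaf via Proposition 6.2, sets $\mathcal{W}_j=\mathcal{M}_{\mathcal{L}^{\otimes j}}$, and then applies Theorem 5.1 together with the Krull--Schmidt theorem to pass from indecomposable $AS$-bundles to arbitrary ones. No gaps.
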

As mentioned in Section 5, for a complete understanding of the $AS$-bundles on Brauer--Severi varieties one has to determine the ranks of the $\mathcal{W}_j$. So consider the sequence of natural numbers $(d_j)_{j\in\mathbb{Z}}$, with $d_j=\mathrm{rk}(\mathcal{W}_j)$. Proposition 5.4 and Remark 5.5 show that we do not have to consider the hole sequence $(d_j)_{j\in\mathbb{Z}}$. Furthermore, we note that $\mathcal{W}_0=\mathcal{O}_X$ and $\mathcal{W}_p=\mathcal{O}_X(p)$, where $p$ is the period of $X$. This implies that $\mathrm{rk}(\mathcal{W}_0)=1=\mathrm{rk}(\mathcal{W}_p)$. 
In fact, for the vector bundle $\mathcal{W}$ of the proof of Proposition 6.2 one can show $\mathrm{End}(\mathcal{W})\simeq A$, where $A$ is the central simple algebra corresponding to $X$ (see [32], p.144 or [36], \text{\S}3, 3.6). Applying Proposition 5.3 implies: 
\begin{cor}
Let $X$ be a $n$-dimensional Brauer--Severi variety over a field $k$ corresponding to a central simple $k$-algebra $A$. Then for every $j\in\mathbb{Z}$ one has
\begin{center}
$\mathrm{rk}(\mathcal{W}_j)=\mathrm{ind}(A^{\otimes j})$.
\end{center}
\end{cor}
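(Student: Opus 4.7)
The plan is to deduce this as a direct corollary of Proposition 5.2, once we identify the endomorphism algebra $\mathrm{End}(\mathcal{W}_1)$ with $A$ up to Brauer equivalence. The setup is exactly the one covered by Proposition 5.2: a Brauer--Severi variety $X$ satisfies $H^0(X,\mathcal{O}_X)=k$ and $\mathrm{Pic}(X\otimes_k\bar{k})\simeq \mathbb{Z}$, and Proposition 6.2 provides an indecomposable absolutely rank-one-isotypical sheaf $\mathcal{W}_1$ of type $\mathcal{L}=\mathcal{O}_{\mathbb{P}^n}(1)$. In the notation of Section 5 we have $\mathcal{W}_j=\mathcal{M}_{\mathcal{L}^{\otimes j}}$, so Proposition 5.2 immediately yields $\mathrm{rk}(\mathcal{W}_j)=\mathrm{ind}(\mathrm{End}(\mathcal{W}_1)^{\otimes j})$.

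It remains to show that $\mathrm{ind}(\mathrm{End}(\mathcal{W}_1)^{\otimes j})=\mathrm{ind}(A^{\otimes j})$. For this I would recall from the proof of Proposition 6.2 the sheaf $\mathcal{W}=\mathcal{V}^{\vee}$ obtained by dualizing the middle term of the twisted Euler sequence (2), and invoke the standard identification $\mathrm{End}(\mathcal{W})\simeq A$ referenced in the paper via [31, p.144] and [35, \S 3.6]. Since $\mathcal{W}_1$ is an indecomposable summand of the absolutely rank-one-isotypical sheaf $\mathcal{W}$, Proposition 3.5 gives $\mathrm{End}(\mathcal{W})\simeq M_m(\mathrm{End}(\mathcal{W}_1))$ for some $m$, and hence $\mathrm{End}(\mathcal{W}_1)$ and $A$ are Brauer equivalent.

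To close the argument, I would use that Brauer equivalence is compatible with tensor products: since $\mathrm{End}(\mathcal{W}_1)\sim A$ in $\mathrm{Br}(k)$, one has $\mathrm{End}(\mathcal{W}_1)^{\otimes j}\sim A^{\otimes j}$ for every $j\in\mathbb{Z}$, and Brauer equivalent central simple $k$-algebras have the same underlying central division algebra and therefore the same index. Substituting into the identity coming from Proposition 5.2 gives
\begin{equation*}
\mathrm{rk}(\mathcal{W}_j)=\mathrm{ind}(\mathrm{End}(\mathcal{W}_1)^{\otimes j})=\mathrm{ind}(A^{\otimes j}),
\end{equation*}
as desired.

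There is essentially no hard step here once the machinery of Sections 3 and 5 is in place; the only non-trivial input is the classical identification $\mathrm{End}(\mathcal{W})\simeq A$, which is imported from the literature. The proof is therefore little more than a bookkeeping exercise combining Proposition 3.5, Proposition 5.2, and this external fact, together with the invariance of the index under Brauer equivalence.
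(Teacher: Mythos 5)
Your proposal is correct and follows essentially the same route as the paper: the paper likewise invokes the identification $\mathrm{End}(\mathcal{W})\simeq A$ from [31] and [35] and then applies Proposition 5.2, leaving implicit the step you spell out via Proposition 3.5 (that $\mathrm{End}(\mathcal{W}_1)$ is Brauer equivalent to $A$, hence has the same index after tensor powers). Your write-up just makes that bookkeeping explicit.
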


\begin{thm}
Let $X$ be a Brauer--Severi variety over $k$ and $A$ the corresponding central simple $k$-algebra of period $p$. Then the $AS$-type of $X$ is given by $d_j=\mathrm{ind}(A^{\otimes j})$ for $0\leq j\leq p$.
\end{thm}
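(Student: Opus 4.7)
The plan is to observe that Theorem 6.5 is essentially a bookkeeping consequence of Corollary 6.4 (which identifies $\mathrm{rk}(\mathcal{W}_j)$ with $\mathrm{ind}(A^{\otimes j})$) together with the definition of the $AS$-type given after Remark 5.4. So I would structure the proof as a short verification rather than anything genuinely new.

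First I would recall the definition: the $AS$-type of $X$ is the $p+1$-tuple
\[
\bigl(1,\mathrm{rk}(\mathcal{W}_1),\ldots,\mathrm{rk}(\mathcal{W}_{p-1}),1\bigr),
\]
where $\mathcal{W}_j=\mathcal{M}_{\mathcal{O}_{\mathbb{P}^n}(1)^{\otimes j}}$ and $p$ is the period of $X$. By Proposition 6.2 such an indecomposable absolutely isotypical sheaf of type $\mathcal{O}_{\mathbb{P}^n}(1)$ exists, so the discussion of Section 5 applies to the Brauer--Severi situation, and the $AS$-type is well defined.

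Next I would invoke Corollary 6.4, which asserts $\mathrm{rk}(\mathcal{W}_j)=\mathrm{ind}(A^{\otimes j})$ for every $j\in\mathbb{Z}$. This takes care of the middle entries $d_1,\ldots,d_{p-1}$ immediately. For the two boundary entries, $d_0=\mathrm{rk}(\mathcal{W}_0)=\mathrm{rk}(\mathcal{O}_X)=1$, matching $\mathrm{ind}(A^{\otimes 0})=\mathrm{ind}(k)=1$; and $d_p=\mathrm{rk}(\mathcal{W}_p)=\mathrm{rk}(\mathcal{O}_X(p))=1$, matching $\mathrm{ind}(A^{\otimes p})=1$ since $p$ is by definition the order of $[A]$ in $\mathrm{Br}(k)$, so $A^{\otimes p}$ is Brauer equivalent to $k$. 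Combining these statements gives $d_j=\mathrm{ind}(A^{\otimes j})$ for all $0\leq j\leq p$.

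The only real obstacle, which has already been dealt with in Corollary 6.4 via Proposition 5.2, is the identification $\mathrm{End}(\mathcal{W})\simeq A$ from the reference cited there; once that input is granted, no additional work is required. Thus the proof reduces to writing down the identifications above and noting consistency at the endpoints.
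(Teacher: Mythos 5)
Your proposal is correct and matches the paper's own treatment: the paper states Theorem 6.5 with only the phrase ``Summarizing we obtain,'' i.e.\ as an immediate combination of Corollary 6.4 with the definition of the $AS$-type from Section 5, which is exactly the bookkeeping you carry out (including the endpoint checks $d_0=d_p=1$). No further comparison is needed.
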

Corollary 6.4 together with Theorem 6.5 now give a complete classification of $AS$-bundles on Brauer--Severi varieties and thus the results of Biswas and Nagaraj [7], [8], [9] and the author [27], [28] as corollary. 
\begin{cor}([28], Theorem 5.1 and [8], Theorem 1.1)
Let $X$ be a $n$-dimensional Brauer--Severi variety over $k$ of index two. Then the $AS$-bundles are of the form
\begin{eqnarray*}
\mathcal{E}\simeq \left(\bigoplus_{i=1}^r \mathcal{O}_X(2a_i)\right)\oplus \left(\bigoplus_{j=1}^s \mathcal{O}_X(2b_j)\otimes \mathcal{W}_1\right)
\end{eqnarray*}
with unique $r,s,a_i$ and $b_j$. The vector bundle $\mathcal{W}_1$ satisfies $\mathcal{W}_1\otimes_k L\simeq \mathcal{O}_{\mathbb{P}^n}(1)^{\oplus 2}$, where $k\subset L$ is a degree two Galois extension that splits $X$.
\end{cor}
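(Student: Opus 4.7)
The plan is to specialize the general decomposition (Corollary 6.3) and the rank formula (Corollary 6.4) to the case where the central simple algebra $A$ corresponding to $X$ has index $2$, and then to identify the splitting field $L$.

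First I would observe that $\mathrm{ind}(A)=2$ forces the period $p$ of $X$ to equal $2$: period and index have the same prime factors and the period divides the index (see Section 2), so $p\in\{1,2\}$, and $p=1$ would mean $A$ is trivial in $\mathrm{Br}(k)$, hence $A\simeq M_{n+1}(k)$ of index $1$, a contradiction. Applying Corollary 6.3 with $p=2$ and using $\mathcal{W}_0\simeq \mathcal{O}_X$ then yields
\begin{eqnarray*}
\mathcal{E}\simeq \left(\bigoplus_{i=1}^r \mathcal{O}_X(2a_i)\right)\oplus \left(\bigoplus_{j=1}^s \mathcal{O}_X(2b_j)\otimes \mathcal{W}_1\right),
\end{eqnarray*}
after trivial reindexing. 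Uniqueness of $r$, $s$, $a_i$, $b_j$ follows from the Krull--Schmidt Theorem on $X$, since by Theorem 5.1 the indecomposable building blocks $\mathcal{O}_X(2a)$ and $\mathcal{O}_X(2b)\otimes \mathcal{W}_1$ are pairwise non-isomorphic for distinct values of the parameters. The rank of $\mathcal{W}_1$ comes straight from Corollary 6.4: $\mathrm{rk}(\mathcal{W}_1)=\mathrm{ind}(A)=2$.

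For the splitting-field statement, the index of $A$ equals the smallest degree of a finite separable splitting field of $A$ (Section 2). Since $\mathrm{ind}(A)=2$, there exists a separable quadratic extension $k\subset L$ with $A\otimes_k L\simeq M_2(L)$, and hence $X\otimes_k L\simeq \mathbb{P}^n_L$; any separable quadratic extension is automatically Galois. To identify $\mathcal{W}_1\otimes_k L$ with $\mathcal{O}_{\mathbb{P}^n_L}(1)^{\oplus 2}$, I would apply Proposition 3.3 with $L$ in the role of the base field: both coherent sheaves on the proper $L$-scheme $\mathbb{P}^n_L$ become $\mathcal{O}_{\mathbb{P}^n_{\bar k}}(1)^{\oplus 2}$ after base change to $\bar k$ (the first because $\mathcal{W}_1$ is absolutely rank-one-isotypical of type $\mathcal{O}_{\mathbb{P}^n}(1)$ and rank $2$), so Proposition 3.3 forces them to be isomorphic already over $L$.

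The only step requiring any genuine thought is this last descent from $\bar k$ to $L$, but it is precisely the content of Proposition 3.3 applied with $L$ in place of $k$; everything else is a routine specialization of the structure results already proved in Sections 3--6.
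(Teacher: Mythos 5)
Your proposal is correct and follows essentially the same route as the paper: deduce period $=2$ from the period--index divisibility, specialize Corollary 6.3, read off $\mathrm{rk}(\mathcal{W}_1)=\mathrm{ind}(A)=2$ from Corollary 6.4/Theorem 6.5, and produce the quadratic Galois splitting field from the fact that the index is the minimal degree of a separable splitting field. Your extra step invoking Proposition 3.3 over $L$ to identify $\mathcal{W}_1\otimes_k L$ with $\mathcal{O}_{\mathbb{P}^n}(1)^{\oplus 2}$ just makes explicit what the paper leaves implicit.
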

\begin{proof}
Since the index of $X$ is two and the period divides the index (see [15], Proposition 4.5.13), we conclude that the period is also two. Note that the period cannot be one, since this would imply that $X$ is the projective space, contradicting the fact that the index of $X$ is two. Hence the $AS$-type of $X$ is $(1,2,1)$ according to Theorem 6.5. Now the assertion follows from Corollary 6.3 and [15], Corollary 4.5.9. 
\end{proof}
Note that the results in [7], [9] and [27] concern the case of non-trivial one-dimensional Brauer--Severi varieties and therefore follow directly from Corollary 6.6 since non-split Brauer--Severi varieties of dimension are of index two.\\

In the case where the Brauer--Severi variety corresponds to a central simple $k$-algebra with period equals index, the $AS$-type can be stated very explicitly. We first need the following facts.
\begin{lem}
Let $A$ be a central simple $k$-algebra of period $p$. Then one has $p/(p,r)=\mathrm{per}(A^{\otimes r})$.
\end{lem}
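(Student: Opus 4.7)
The plan is to reduce the statement to a purely group-theoretic fact about orders of elements in an abelian group, applied to the Brauer group $\mathrm{Br}(k)$.

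First I would recall that by definition the period of a central simple $k$-algebra is the order of its Brauer class in $\mathrm{Br}(k)$, and that the group operation on $\mathrm{Br}(k)$ is induced by the tensor product. So if $[A]$ denotes the Brauer class of $A$, we have $[A^{\otimes r}] = r \cdot [A]$ in the abelian group $\mathrm{Br}(k)$, and $\mathrm{per}(A^{\otimes r})$ is the order of $r \cdot [A]$.

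Next I would invoke the elementary fact that in any abelian group, if an element $x$ has finite order $p$, then the order of $rx$ equals $p/(p,r)$. This is a one-line calculation: let $e = (p,r)$ and write $p = e p'$, $r = e r'$ with $(p', r') = 1$. Then $p' \cdot (rx) = p' r' e x = r' \cdot p x = 0$, so the order divides $p'$. Conversely, if $m \cdot (rx) = 0$, then $p \mid mr$, hence $p' \mid m r'$, and since $(p', r') = 1$ we get $p' \mid m$. Therefore the order of $rx$ is exactly $p' = p/(p,r)$.

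Applying this with $x = [A]$ of order $p$ immediately yields $\mathrm{per}(A^{\otimes r}) = p/(p,r)$. The main obstacle is essentially just notational: making sure the reader sees clearly that period in $\mathrm{Br}(k)$ matches the abstract notion of order, after which the lemma reduces to a trivial fact. No deep ingredient is required.
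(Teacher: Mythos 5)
Your proof is correct. The paper states Lemma 6.7 without any proof (it is presented as "a simple observation"), and your argument — identifying $\mathrm{per}(A^{\otimes r})$ with the order of $r\cdot[A]$ in $\mathrm{Br}(k)$ and applying the standard fact that an element of order $p$ in an abelian group has $r$-th multiple of order $p/(p,r)$ — is exactly the intended justification.
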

\begin{prop}
Let $A$ be a central simple $k$-algebra with period $p$ and index $i$. Then  for all $r\geq 0$ one has that $p/(p,r)$ divides $\mathrm{ind}(A^{\otimes r})$ and $\mathrm{ind}(A^{\otimes r})$ divides $i/(i,r)$. In particular one has $p/(p,r)\leq\mathrm{ind}(A^{\otimes r})\leq i/(i,r)$. 
\end{prop}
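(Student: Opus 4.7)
The statement splits cleanly into an upper bound and a lower bound on $\mathrm{ind}(A^{\otimes r})$, and the plan is to handle each separately using the two ingredients already available in the excerpt, namely Theorem 6.1 and Lemma 6.7 together with the basic fact (recalled in Section 2) that the period of a central simple algebra divides its index.

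For the upper bound $\mathrm{ind}(A^{\otimes r}) \mid i/(i,r)$, I would simply invoke Theorem 6.1(iii): setting $e = (r,i)$, that statement gives directly that the index of $A^{\otimes r}$ divides $i/e = i/(i,r)$. No additional work is required here, so this part is essentially a quotation.

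For the lower bound $p/(p,r) \mid \mathrm{ind}(A^{\otimes r})$, I would first apply Lemma 6.7 to identify $p/(p,r)$ with $\mathrm{per}(A^{\otimes r})$. Then, using the general fact (recalled in Section 2, following [15, Proposition 4.5.13]) that the period of any central simple $k$-algebra divides its index, I conclude $\mathrm{per}(A^{\otimes r}) \mid \mathrm{ind}(A^{\otimes r})$. Combining these two divisibilities yields $p/(p,r) \mid \mathrm{ind}(A^{\otimes r})$, as desired.

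The final inequality $p/(p,r) \leq \mathrm{ind}(A^{\otimes r}) \leq i/(i,r)$ is then automatic, since divisibility of positive integers implies the corresponding inequality. There is really no obstacle in this proof: both bounds are immediate consequences of results already in hand, and the only thing to be careful about is not confusing the roles of the period $p$ and the index $i$ — the upper bound genuinely needs $i/(i,r)$ (and cannot be replaced by $p/(p,r)$, since in general $p \leq i$), while the lower bound uses the period to produce the divisibility constraint that the index must satisfy. Thus the whole argument is essentially a two-line concatenation of Theorem 6.1(iii), Lemma 6.7, and $\mathrm{per} \mid \mathrm{ind}$.
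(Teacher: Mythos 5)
Your proof is correct and follows exactly the same route as the paper: the lower bound via Lemma 6.7 combined with $\mathrm{per}\mid\mathrm{ind}$ from [15, Proposition 4.5.13], and the upper bound by direct quotation of Theorem 6.1(iii). Nothing is missing.
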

\begin{proof}
By Lemma 6.7 we have $p/(p,r)=\mathrm{per}(A^{\otimes r})$, and since the period always divides the index (see [15], Proposition 4.5.13) we find that $p/(p,r)$ divides $\mathrm{ind}(A^{\otimes r})$. The second inequality $\mathrm{ind}(A^{\otimes r})\leq i/(i,r)$ and the fact that $\mathrm{ind}(A^{\otimes r})$ divides $i/(i,r)$ is (iii) of Theorem 6.1.
\end{proof}
The last proposition enables us to determine the $AS$-type of Brauer--Severi varieties with same period and index.
\begin{prop}
Let $X$ be a Brauer--Severi variety over $k$ corresponding to a central simple $k$-algebra $A$ such that the period $p$ equals the index $i$. Then the AS-type of $X$ is given by 
\begin{center}
$\mathrm{rk}(\mathcal{W}_j)= p/(p,j)$
\end{center} for $0\leq j\leq p$.
\end{prop}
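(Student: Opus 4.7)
The plan is to combine Theorem 6.5, which identifies the entries of the $AS$-type with the indices of tensor powers of $A$, with the two-sided bound on those indices provided by Proposition 6.8, and then observe that the bounds collapse under the hypothesis $p=i$.

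More concretely, I would start from Theorem 6.5, which gives
\begin{eqnarray*}
\mathrm{rk}(\mathcal{W}_j) \;=\; \mathrm{ind}(A^{\otimes j})
\end{eqnarray*}
for every $0 \le j \le p$. The content of the proposition is therefore purely algebraic: one only has to compute $\mathrm{ind}(A^{\otimes j})$ under the assumption that the period and index of $A$ coincide.

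The key step is an appeal to Proposition 6.8, which asserts that for every $r \ge 0$,
\begin{eqnarray*}
p/(p,r) \;\leq\; \mathrm{ind}(A^{\otimes r}) \;\leq\; i/(i,r).
\end{eqnarray*}
Setting $r=j$ and invoking the hypothesis $p=i$, the outer terms become equal and force $\mathrm{ind}(A^{\otimes j}) = p/(p,j)$. Substituting this into the equality $\mathrm{rk}(\mathcal{W}_j) = \mathrm{ind}(A^{\otimes j})$ from Theorem 6.5 yields the desired formula.

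There is no real obstacle here, since the two ingredients (Theorem 6.5 and Proposition 6.8) are already in place; the proof is essentially a one-line deduction. The only thing worth noting explicitly is the edge behaviour: at $j=0$ and $j=p$ the formula gives $p/(p,0) = 1$ and $p/(p,p) = 1$, matching the fact that $\mathcal{W}_0 \simeq \mathcal{O}_X$ and $\mathcal{W}_p \simeq \mathcal{O}_X(p)$ both have rank one, which is consistent with $A^{\otimes 0}$ and $A^{\otimes p}$ being split (the latter because $p$ is the period).
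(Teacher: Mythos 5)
Your proof is correct and follows essentially the same route as the paper: the paper also deduces $p/(p,j)=\mathrm{ind}(A^{\otimes j})$ from Proposition 6.8 using $p=i$, and then concludes via Corollary 6.4 (the statement $\mathrm{rk}(\mathcal{W}_j)=\mathrm{ind}(A^{\otimes j})$, which is what you invoke through Theorem 6.5). The remark on the edge cases $j=0$ and $j=p$ is a harmless consistency check not present in the paper.
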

\begin{proof}
Since the period $p$ equals the index $i$, we conclude from Proposition 6.8 that $p/(p,j)=\mathrm{ind}(A^{\otimes j})$ for $0\leq j\leq p$. The assertion then follows from Corollary 6.4.
\end{proof}
\begin{rema}
\textnormal{The problem for which fields $k$ the period equals the index is called \emph{period-index problem} and is highly non-trivial. For further discussion on this problem we refer the reader to [6] and to the work of de Jong [12]}.
\end{rema}
If $k$ is local or global, any central division algebra over $k$ is cyclic and the period equals the index (see [33], Theorem 10.7). In this case Proposition 6.9 applies and we can determine the $AS$-type . The next observation shows that it is enough to consider minimal linear subvarieties to determine the $AS$-type.
\begin{prop}
Let $X$ and $Y$ be Brauer--Severi varieties over $k$ which are Brauer equivalent. Then $X$ and $Y$ have the same $AS$-type.
\end{prop}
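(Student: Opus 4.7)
The plan is to reduce everything to the Brauer-class description of the $AS$-type given in Theorem 6.5. Let $A$ and $B$ denote the central simple $k$-algebras corresponding to $X$ and $Y$ respectively. By hypothesis $X$ and $Y$ are Brauer equivalent, which by definition means that $A$ and $B$ define the same class in $\mathrm{Br}(k)$, i.e.\ $A \simeq M_n(D)$ and $B \simeq M_m(D)$ for a common central division $k$-algebra $D$.

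First I would observe that $\mathrm{per}(A) = \mathrm{per}(B)$, since the period is by definition the order of the class $[A] = [B]$ in $\mathrm{Br}(k)$. Consequently $X$ and $Y$ have the same period $p$, so by Theorem 6.5 their $AS$-types are the $(p+1)$-tuples
\begin{eqnarray*}
(\mathrm{ind}(A^{\otimes j}))_{0 \leq j \leq p} \quad \text{and} \quad (\mathrm{ind}(B^{\otimes j}))_{0 \leq j \leq p},
\end{eqnarray*}
and it suffices to prove the equality of these tuples entry by entry.

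Next I would note that the tensor product in $\mathrm{Br}(k)$ is well-defined on equivalence classes, so $[A^{\otimes j}] = [A]^j = [B]^j = [B^{\otimes j}]$ in $\mathrm{Br}(k)$ for every $j$. In other words $A^{\otimes j}$ and $B^{\otimes j}$ are themselves Brauer equivalent, hence admit a common Wedderburn division part. Since the index of a central simple $k$-algebra depends only on its Brauer class (being the degree of its unique Wedderburn division part, cf.\ the discussion in Section 2), we conclude $\mathrm{ind}(A^{\otimes j}) = \mathrm{ind}(B^{\otimes j})$ for all $0 \leq j \leq p$. Combined with the equality of periods this yields that the $AS$-types of $X$ and $Y$ coincide, completing the proof.

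There is no real obstacle here: the statement is essentially a translation of the fact that the function $j \mapsto \mathrm{ind}(A^{\otimes j})$ is an invariant of the Brauer class of $A$, together with the explicit formula for the $AS$-type from Theorem 6.5. The only point worth being careful about is verifying that the length of the $AS$-tuple (namely $p+1$) is itself Brauer-invariant, which is immediate from the interpretation of $p$ as the order of $[A]$ in $\mathrm{Br}(k)$.
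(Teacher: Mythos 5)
Your proposal is correct and follows essentially the same route as the paper: both reduce to Theorem 6.5 by noting that Brauer equivalent varieties have the same period and that $\mathrm{ind}(A^{\otimes j})=\mathrm{ind}(B^{\otimes j})$ because $A^{\otimes j}$ and $B^{\otimes j}$ share the same Brauer class (the paper phrases this via the common division algebra $D$, writing $\mathrm{ind}(A^{\otimes j})=\mathrm{ind}(D^{\otimes j})=\mathrm{ind}(B^{\otimes j})$). No substantive difference.
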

\begin{proof}
Let $A$ be the central simple $k$-algebra corresponding to $X$ and $B$ the central simple $k$-algebra corresponding to $Y$. Since $A$ and $B$ are Brauer equivalent, there is a unique central division algebra $D$ such that $A\simeq M_n(D)$ and $B\simeq M_m(D)$ for suitable $n$ and $m$. Hence $\mathrm{ind}(A^{\otimes j})=\mathrm{ind}(D^{\otimes j})=\mathrm{ind}(B^{\otimes j})$ for all $j\in\mathbb{Z}$. Since Brauer equivalent Brauer--Severi varieties have the same period, Theorem 6.5 yields the assertion.
\end{proof}
Interestingly, it turns out that the $AS$-type is also a birational invariant for Brauer--Severi varieties.
\begin{prop}
Let $X$ and $Y$ be two birational Brauer--Severi varieties. Then they have the same $AS$-type.
\end{prop}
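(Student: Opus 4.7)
The plan is to reduce the claim to the identity $\mathrm{ind}(A^{\otimes j}) = \mathrm{ind}(B^{\otimes j})$ for every $j$, where $A$ and $B$ are the central simple $k$-algebras corresponding to $X$ and $Y$; by Theorem 6.5 this identity forces the $AS$-types to coincide.

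First I would invoke a classical theorem of Amitsur: if two Brauer--Severi varieties are birational over $k$, then the Brauer classes of their associated central simple algebras generate the same cyclic subgroup of $\mathrm{Br}(k)$. Equivalently, via the Amitsur--Roquette description of the kernel of the restriction $\mathrm{Br}(k)\to\mathrm{Br}(k(X))$ as $\langle [A]\rangle$, the hypothesis of birationality yields $k(X)\simeq k(Y)$ and hence $\langle[A]\rangle=\langle[B]\rangle$ in $\mathrm{Br}(k)$. In particular $\mathrm{per}(A)=\mathrm{per}(B)=:p$, and there is an integer $m$ with $\gcd(m,p)=1$ such that $[B]=m[A]$.

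Next I would compare tensor powers. For any $j$ we have $[B^{\otimes j}]=[A^{\otimes mj}]=[(A^{\otimes j})^{\otimes m}]$, so Brauer equivalence gives $\mathrm{ind}(B^{\otimes j})=\mathrm{ind}\bigl((A^{\otimes j})^{\otimes m}\bigr)$. By Lemma 6.7 the period of $A^{\otimes j}$ equals $p/(p,j)$ and therefore divides $p$, so its prime factors lie among those of $p$; since period and index of a central simple algebra share the same set of prime factors, the same is true for $\mathrm{ind}(A^{\otimes j})$. Combined with $\gcd(m,p)=1$ this forces $\gcd(m,\mathrm{ind}(A^{\otimes j}))=1$, and Theorem 6.1(ii) applied to $A^{\otimes j}$ with exponent $m$ gives $\mathrm{ind}\bigl((A^{\otimes j})^{\otimes m}\bigr)=\mathrm{ind}(A^{\otimes j})$. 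Putting everything together we obtain $\mathrm{ind}(B^{\otimes j})=\mathrm{ind}(A^{\otimes j})$ for all $j$, and the assertion then follows from Theorem 6.5.

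The main obstacle is the first step: passing from the topological/birational hypothesis to the algebraic equality $\langle[A]\rangle=\langle[B]\rangle$ in $\mathrm{Br}(k)$. This requires citing the Amitsur--Roquette theory of splitting fields of Brauer--Severi varieties and is the only non-elementary input. Once this identification of cyclic subgroups is available, the rest of the argument is purely arithmetic, relying only on the compatibility of prime factors of period and index together with Theorem 6.1(ii).
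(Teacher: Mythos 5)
Your proof is correct, and it follows the paper's overall architecture — invoke Amitsur's theorem ([15], Corollary 5.4.2) to get $\langle[A]\rangle=\langle[B]\rangle$ in $\mathrm{Br}(k)$, hence equal periods, then reduce everything to the identity $\mathrm{ind}(A^{\otimes j})=\mathrm{ind}(B^{\otimes j})$ and conclude via Theorem 6.5 — but the mechanism you use for that identity is genuinely different. The paper writes $[B]=[A^{\otimes m}]$ and $[A]=[B^{\otimes l}]$ and runs a two-way divisibility argument: by Proposition 6.8, $\mathrm{ind}(C^{\otimes r})$ divides $\mathrm{ind}(C)$, so $\mathrm{ind}(B^{\otimes r})=\mathrm{ind}(A^{\otimes rm})$ divides $\mathrm{ind}(A^{\otimes r})$ and symmetrically, forcing equality. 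You instead note that $m$ is coprime to the common period $p$, that $\mathrm{per}(A^{\otimes j})=p/(p,j)$ divides $p$, and that period and index share prime factors, so $\gcd(m,\mathrm{ind}(A^{\otimes j}))=1$; Theorem 6.1(ii) then gives $\mathrm{ind}\bigl((A^{\otimes j})^{\otimes m}\bigr)=\mathrm{ind}(A^{\otimes j})$ in one stroke. Your route buys a cleaner one-directional argument (no need for the symmetric pass with the exponent $l$), at the cost of pulling in the period--index prime-factor compatibility ([15], Proposition 4.5.13) and the coprimality case of Theorem 6.1; the paper's route uses only the divisibility statement of Theorem 6.1(iii) repackaged as Proposition 6.8 and is marginally more self-contained. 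Both are complete and correct.
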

\begin{proof}
Let $A$ be the central simple $k$-algebra corresponding to $X$ and $B$ the algebra corresponding to $Y$. Since $X$ and $Y$ are supposed to be birational, [15], Corollary 5.4.2 implies that $A$ and $B$ generate the same cyclic subgroup in $\mathrm{Br}(k)$. Denoting by $i$ the index of $A$, Proposition 6.8 shows that $\mathrm{ind}(A^{\otimes r})$ divides $i/(i,r)$. Since $i/(i,r)$ divides $i$, we conclude that $\mathrm{ind}(A^{\otimes r})$ divides $i=\mathrm{ind}(A)$. The same holds for $B$ and we see that $\mathrm{ind}(B^{\otimes s})$ divides $\mathrm{ind}(B)$. In what follows we show that $\mathrm{ind}(A^{\otimes r})=\mathrm{ind}(B^{\otimes r})$ for all $r$. Since $A$ and $B$ generate the same cyclic subgroup in $\mathrm{Br}(k)$, we know that $A$ is Brauer equivalent to $B^{\otimes l}$ and $B$ to $A^{\otimes m}$ for suitable $l$ and $m$. Therefore $\mathrm{ind}(A^{\otimes m})=\mathrm{ind}(B)$ divides $\mathrm{ind}(A)$ and $\mathrm{ind}(B^{\otimes l})=\mathrm{ind}(A)$ divides $\mathrm{ind}(B)$. This shows that $\mathrm{ind}(B)$ divides $\mathrm{ind}(A)$ and vice verse. Thus $\mathrm{ind}(B)=\mathrm{ind}(A)$. The same argument applied to $\mathrm{ind}(A^{\otimes r})$ and $\mathrm{ind}(B^{\otimes r})$ yields that $\mathrm{ind}(A^{\otimes rm})=\mathrm{ind}(B^{\otimes r})$ divides $\mathrm{ind}(A^{\otimes r})$ and that $\mathrm{ind}(B^{\otimes rl})=\mathrm{ind}(A^{\otimes r})$ divides $\mathrm{ind}(B^{\otimes r})$. This shows $\mathrm{ind}(A^{\otimes r})=\mathrm{ind}(B^{\otimes r})$ for all $r$. As mentioned above, $A$ and $B$ generate the same cyclic subgroup and therefore have the same period. Now Theorem 6.5 implies that $X$ and $Y$ have the same $AS$-type.
\end{proof}
\begin{rema}
\textnormal{Note that the other implication in Proposition 6.11 and 6.12 does not hold. Indeed, by [15], Theorem 1.4.2, two non-split one-dimensional Brauer--Severi varieties $X$ and $Y$ are birational if and only if they are isomorphic. But according to Theorem 6.5 and Corollary 6.6 any one-dimensional Brauer--Severi variety has the same $AS$-type. So indeed there are Brauer--Severi varieties which are not Brauer equivalent (respectively not birational) but have the same $AS$-type.}
\end{rema}
As a consequence of the Horrocks criterion on $\mathbb{P}^n$ (see [29]) we can state a cohomological criterion for a vector bundle on a Brauer--Severi variety to be a $AS$-bundle.

\begin{thm}[AS-criterion]  
Let $X$ be a $n$-dimensional Brauer--Severi variety over $k$ and period $p$. A vector bundle $\mathcal{E}$ is a $AS$-bundle if and only if for $0<i<n$ one has
\begin{center}
$H^i(X,\mathcal{E}\otimes\mathcal{O}_X(ap)\otimes \mathcal{W}_j)=0$
\end{center}
for every $a\in\mathbb{Z}$ and every $0\leq j\leq p-1$.
\end{thm}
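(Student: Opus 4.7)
The approach is to reduce the statement to Horrocks' criterion on $\mathbb{P}^n$ via faithfully flat base change along $k\subset \bar{k}$. Recall that Horrocks' criterion asserts that a locally free sheaf $\mathcal{F}$ of finite rank on $\mathbb{P}^n$ splits as a direct sum of invertible sheaves if and only if $H^i(\mathbb{P}^n,\mathcal{F}\otimes \mathcal{O}_{\mathbb{P}^n}(m))=0$ for all $0<i<n$ and all $m\in\mathbb{Z}$. Since $\mathcal{E}$ is an $AS$-bundle precisely when $\mathcal{E}\otimes_k\bar{k}$ splits on $X\otimes_k\bar{k}\simeq \mathbb{P}^n$, the natural strategy is to translate the cohomological criterion on $\mathbb{P}^n$ into a criterion on $X$.

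The key ingredient is to rewrite the twists $\mathcal{O}_{\mathbb{P}^n}(m)$ for every $m\in\mathbb{Z}$ in terms of sheaves defined already on $X$. Using division with remainder, write $m=ap+j$ with unique $a\in\mathbb{Z}$ and $0\leq j\leq p-1$. Since $\mathcal{O}_X(p)$ is the generator $\mathcal{J}$ of $\mathrm{Pic}(X)$ (whose base change is $\mathcal{O}_{\mathbb{P}^n}(p)$), and since $\mathcal{W}_j\otimes_k\bar{k}\simeq \mathcal{O}_{\mathbb{P}^n}(j)^{\oplus d_j}$ by construction (where $d_j=\mathrm{rk}(\mathcal{W}_j)=\mathrm{ind}(A^{\otimes j})$), one obtains
\begin{eqnarray*}
(\mathcal{O}_X(ap)\otimes \mathcal{W}_j)\otimes_k\bar{k}\simeq \mathcal{O}_{\mathbb{P}^n}(ap+j)^{\oplus d_j}=\mathcal{O}_{\mathbb{P}^n}(m)^{\oplus d_j}.
\end{eqnarray*}
Thus as $(a,j)$ ranges over $\mathbb{Z}\times\{0,\ldots,p-1\}$, the base changes exhaust (up to multiplicities) every invertible sheaf on $\mathbb{P}^n$.

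Now I would invoke flat base change: for the flat morphism $\mathrm{Spec}(\bar{k})\to \mathrm{Spec}(k)$ one has the canonical isomorphism
\begin{eqnarray*}
H^i(X,\mathcal{E}\otimes\mathcal{O}_X(ap)\otimes\mathcal{W}_j)\otimes_k\bar{k}\simeq H^i\bigl(\mathbb{P}^n,(\mathcal{E}\otimes_k\bar{k})\otimes \mathcal{O}_{\mathbb{P}^n}(ap+j)^{\oplus d_j}\bigr).
\end{eqnarray*}
Hence the left hand side vanishes if and only if the right hand side does; and since $d_j\geq 1$, vanishing of the right hand side is equivalent to vanishing of $H^i(\mathbb{P}^n,(\mathcal{E}\otimes_k\bar{k})\otimes\mathcal{O}_{\mathbb{P}^n}(m))$ for the corresponding $m=ap+j$.

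With these observations the two implications become routine. For $(\Rightarrow)$: if $\mathcal{E}$ is an $AS$-bundle, then $\mathcal{E}\otimes_k\bar{k}\otimes \mathcal{O}_{\mathbb{P}^n}(m)$ splits as a direct sum of line bundles on $\mathbb{P}^n$, and each line bundle has vanishing intermediate cohomology, so the required groups vanish. For $(\Leftarrow)$: the hypothesis together with the above base-change identification yields $H^i(\mathbb{P}^n,(\mathcal{E}\otimes_k\bar{k})\otimes\mathcal{O}_{\mathbb{P}^n}(m))=0$ for all $0<i<n$ and every $m\in\mathbb{Z}$, so Horrocks' criterion forces $\mathcal{E}\otimes_k\bar{k}$ to split, i.e.\ $\mathcal{E}$ is an $AS$-bundle. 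There is essentially no obstacle here; the only point that requires a small check is the identification of $(\mathcal{O}_X(ap)\otimes \mathcal{W}_j)\otimes_k\bar{k}$, which follows from the very definition of $\mathcal{W}_j$ and the fact that $\mathcal{O}_X(p)$ descends $\mathcal{O}_{\mathbb{P}^n}(p)$.
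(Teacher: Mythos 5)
Your proof is correct and follows exactly the route the paper intends: Theorem 6.14 is stated there without a written proof, merely as ``a consequence of the Horrocks criterion'' on $\mathbb{P}^n$, and your argument --- flat base change along $k\subset\bar{k}$, division with remainder $m=ap+j$, and the identification $(\mathcal{O}_X(ap)\otimes\mathcal{W}_j)\otimes_k\bar{k}\simeq\mathcal{O}_{\mathbb{P}^n}(m)^{\oplus d_j}$ with $d_j\geq 1$ --- is precisely the reduction the author has in mind. Nothing is missing.
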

We end up this section mentioning that the duals of the indecomposable $AS$-bundles $\mathcal{W}_j$ with $j\geq 0$ on a Brauer--Severi variety $X$ generate the Grothendieck group $K_0(X)$. This follows directely from [32], Section 8, Theorem 4.1 or [22], Theorem 3.1. 
 
\section{AS-bundles on generalized Brauer--Severi varieties}
In this section we will see that nearly all results stated in the last section for Brauer--Severi varieties also hold for generalized Brauer--Severi varieties.\\

Let $A$ be a central simple $k$-algebra of degree $n$ and $1\leq d\leq n$. Now consider the subset of $\mathrm{Grass}_k(d\cdot n,A)$ consisting of those subspaces of $A$ that are left ideals $L$ of dimension $d\cdot n$. This subset of $\mathrm{Grass}_k(d\cdot n,A)$ can be given a structure of a projective scheme over $k$, defined by the relations stating that the $L$ are left ideals (see [10], p.100 for details). This scheme is denoted by $\mathrm{BS}(d,A)$ and is called \emph{generalized Brauer--Severi variety}. It is a closed subscheme of the Grassmannian $\mathrm{Grass}_k(dn,A)$. For a field extension $k\subset E$ one has $\mathrm{BS}(d,A\otimes_k E)\simeq\mathrm{BS}(d,A)\otimes_k E$ (see [10]). As in the case of Brauer--Severi varieties, there is always a finite separable and therefore a finite Galois extension $k\subset K$ such that $\mathrm{BS}(d,A)\otimes_k K\simeq \mathrm{Grass}_K(d,n)$. Clearly, the generalized Brauer--Severi variety becomes isomorphic to the Grassmannian after base change to the algebraic closure $\bar{k}$. Note that for $d=1$ one obtains the usual Brauer--Severi variety as defined in Section 2. Unfortunately, there is no one-to-one correspondence between generalized Brauer--Severi varieties and central simple $k$-algebras (see [10], Theorem 1). Nonetheless, it is easy to classify all $AS$-bundles on generalized Brauer--Severi varieties (see Theorem 7. and Proposition 7.2 below). Recall the tautological exact sequence on $X=\mathrm{Grass}_k(d,n)$ 
\begin{eqnarray*}
0\longrightarrow \mathcal{S}\longrightarrow \mathcal{O}_X^{\oplus n}\longrightarrow \mathcal{Q}\longrightarrow 0
\end{eqnarray*}
with \emph{tautological sheaf} $\mathcal{S}$ which is a vector bundle of rank $d$. On the generalized Brauer--Severi variety $\mathrm{BS}(d,A)$ one has also a tautological short exact sequence 
\begin{eqnarray*}
0\longrightarrow \mathcal{I}\longrightarrow \mathcal{O}_{\mathrm{BS}(d,A)}^{\oplus n^2}\longrightarrow \mathcal{R}\longrightarrow 0.
\end{eqnarray*}
This short becomes after base change $\mathrm{BS}(d,A)\otimes_k L\simeq\mathrm{Grass}_{L}(d,n)$ to some splitting field $L$ of $A$ (see [25], Section 4):
\begin{eqnarray*}
0\longrightarrow \mathcal{S}^{\oplus n}\longrightarrow \mathcal{O}_{\mathrm{Grass}_{L}(d,n)}^{\oplus n^2}\longrightarrow \mathcal{Q}^{\oplus n}\longrightarrow 0.
\end{eqnarray*} 
The vector bundle $\mathcal{I}$ is called \emph{tautological sheaf} on $Y=\mathrm{BS}(d,A)$. Recall that for $\mathrm{Grass}(d,n)$ the Picard group coincides with its first Chow group (see [14], Example 15.3.6). By Proposition 14.6.6 in [14] we have $\mathrm{Pic}(\mathrm{Grass}(d,n))\simeq \mathbb{Z}$, where the ample generator can be taken to be $\mathcal{L}=\mathrm{det}(\mathcal{S}^{\vee})$. Proposition 3.3 now implies $\mathrm{Pic}(\mathrm{BS}(d,A))\simeq \mathbb{Z}$. In order to apply Theorem 5.1, we have to show that there is a up to isomorphism unique pure vector bundle of type $\mathcal{L}$ on $Y$. For this, let $\Sigma^{\lambda}$ be the Schur functor associated with the partition $\lambda=(\lambda_1,\lambda_2,...,\lambda_{d})$, where $0\leq \lambda_i\leq n-d$ (see [1], [13] for details). Levine, Srinivas and Weyman [25], Section 4 proved that for the tautological sheaf $\mathcal{S}$ on $\mathrm{Grass}(d,n)$ the vector bundles $\Sigma^{\lambda}(\mathcal{S})^{\oplus n\cdot|\lambda|}$ descent to vector bundles $\mathcal{N}_{\lambda}$ on $\mathrm{BS}(d,A)$. Note that these vector bundles are unique up to isomorphism by Proposition 3.3. In particular, for the ample generator $\mathcal{L}=\mathrm{det}(\mathcal{S}^{\vee})$ of $\mathrm{Pic}(\mathrm{Grass}(d,n))$ there is a up to isomorphism unique indecomposable vector bundle $\mathcal{W}_{\mathcal{L}}$ satisfying $\mathcal{W}_{\mathcal{L}}\otimes_k k^{sep}\simeq \mathcal{L}^{\oplus \mathrm{rk}(\mathcal{W}_{\mathcal{L}})}$. As explained in Section 5, for all $\mathcal{L}^{\otimes j}\in \mathrm{Pic}(\mathrm{Grass}(d,n))$ there exist up to isomorphism unique indecomposable vector bundles $\mathcal{W}_{\mathcal{L}^{\otimes j}}$ with $\mathcal{W}_{\mathcal{L}^{\otimes j}}\otimes_k k^{sep}\simeq (\mathcal{L}^{\otimes j})^{\oplus \mathrm{rk}(\mathcal{W}_{\mathcal{L}^{\otimes j}})}$. Let $\mathcal{M}$ be the generator of $\mathrm{Pic}(\mathrm{BS}(d,A))$ satisfying $\mathcal{M}\otimes_k k^{sep}\simeq \mathcal{L}^{\otimes r}$, where $r>0$ denotes the period of $\mathrm{BS}(d,A)$. Now Theorem 5.1 immediately implies:
\begin{thm}
Let $X=\mathrm{BS}(d,A)$ be a generalized Brauer--Severi variety of period $r$ for the central simple $k$-algebra $A$ and $\mathcal{M}$ the generator of $\mathrm{Pic}(\mathrm{BS}(d,A))$. Then all indecomposable $AS$-bundles of finite rank are of the form:
\begin{center}
$\mathcal{M}^{\otimes a}\otimes \mathcal{W}_{\mathcal{L}^{\otimes j}}$
\end{center} 
with unique $a\in\mathbb{Z}$ and unique $0\leq j\leq r-1$. 
\end{thm}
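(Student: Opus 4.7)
The plan is to recognize Theorem 7.1 as a direct application of Theorem 5.1 once all its hypotheses have been verified for $X = \mathrm{BS}(d,A)$. First I would observe that $X$ is projective (it is a closed subscheme of the Grassmannian $\mathrm{Grass}_k(dn,A)$) and therefore proper over $k$. Second, I would invoke the discussion immediately preceding the theorem: since $\mathrm{Pic}(\mathrm{Grass}(d,n)) \simeq \mathbb{Z}$ with generator $\mathcal{L} = \det(\mathcal{S}^{\vee})$, Proposition 3.3 gives $\mathrm{Pic}(X \otimes_k \bar{k}) \simeq \mathbb{Z}$, so the cyclic-Picard hypothesis of Theorem 5.1 holds. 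The generator $\mathcal{M}$ of $\mathrm{Pic}(X)$ in the present statement plays the role of $\mathcal{J}$ in Theorem 5.1, and its period in the sense of Section 5 coincides with $r$ by definition.

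Next, I would verify the remaining (and only nontrivial) hypothesis of Theorem 5.1: the existence of an indecomposable absolutely isotypical sheaf of type $\mathcal{L}$. This is the content of the Levine--Srinivas--Weyman descent recalled just before the theorem: for each partition $\lambda$ the locally free sheaf $\Sigma^{\lambda}(\mathcal{S})^{\oplus n|\lambda|}$ on $\mathrm{Grass}(d,n)$ descends to a locally free sheaf $\mathcal{N}_{\lambda}$ on $X$. Applied to $\lambda = (1,0,\dots,0)$, one obtains a locally free sheaf on $X$ whose base change to $\bar{k}$ is a direct sum of copies of $\mathcal{L}$; its Krull--Schmidt decomposition together with Remark 3.6 yields a unique indecomposable direct summand that is absolutely rank-one-isotypical of type $\mathcal{L}$. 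This is the required $\mathcal{M}_{\mathcal{L}}$, which in the notation of Theorem 7.1 is $\mathcal{W}_{\mathcal{L}}$; the analogous bundles $\mathcal{W}_{\mathcal{L}^{\otimes j}}$ for $j \in \mathbb{Z}$ were produced by the argument in Section 5 (tensor $\mathcal{W}_{\mathcal{L}}^{\otimes j}$, decompose via Krull--Schmidt, and use Proposition 3.4).

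With every hypothesis of Theorem 5.1 in place, I would simply transcribe the conclusion: every indecomposable $AS$-bundle on $X$ has the form $\mathcal{J}^{\otimes a} \otimes \mathcal{M}_{\mathcal{L}^{\otimes j}}$ with unique $a \in \mathbb{Z}$ and unique $0 \leq j \leq r-1$, which under the identifications $\mathcal{J} = \mathcal{M}$ and $\mathcal{M}_{\mathcal{L}^{\otimes j}} = \mathcal{W}_{\mathcal{L}^{\otimes j}}$ is precisely the statement of Theorem 7.1. The only part that carries genuine mathematical content beyond citation is the existence of $\mathcal{W}_{\mathcal{L}}$, and this has already been reduced to the Levine--Srinivas--Weyman descent theorem in the paragraph preceding the statement, so the proof amounts to assembling these pieces carefully. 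The main obstacle, if one insists on writing things out, is simply making sure the indexing conventions (the role of the period $r$, the identification of generators of $\mathrm{Pic}(X)$ and $\mathrm{Pic}(X \otimes_k \bar{k})$) match those of Theorem 5.1; this is bookkeeping rather than a genuine difficulty.
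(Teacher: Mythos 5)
Your proposal follows exactly the paper's route: the paper gives no separate proof of Theorem 7.1 beyond the preparatory paragraph establishing $\mathrm{Pic}(\mathrm{BS}(d,A)\otimes_k\bar k)\simeq\mathbb{Z}$ via Proposition 3.3 and the existence of $\mathcal{W}_{\mathcal{L}}$ via Levine--Srinivas--Weyman descent, after which it states ``Applying Theorem 5.1 yields'' the result, which is precisely your assembly. One small correction: to obtain an absolutely isotypical sheaf of type $\mathcal{L}=\det(\mathcal{S}^{\vee})$ you need the partition $\lambda=(1,1,\dots,1)$ with $d$ ones (so that $\Sigma^{\lambda}(\mathcal{S})=\bigwedge^{d}\mathcal{S}=\det(\mathcal{S})$, followed by dualizing as in Proposition 5.3(i)), not $\lambda=(1,0,\dots,0)$, which gives $\Sigma^{\lambda}(\mathcal{S})=\mathcal{S}$ itself, a sheaf of rank $d$ rather than an invertible one; with that indexing fixed the argument is complete and identical to the paper's.
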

An immediate consequence of Proposition 5.3 is the following: 
\begin{prop}
Let $X=\mathrm{BS}(d,A)$ be a generalized Brauer--Severi variety over $k$ for the central simple $k$-algebra $A$ of degree $n$. Then for all $j\in\mathbb{Z}$ one has:
\begin{eqnarray*}
\mathrm{rk}(\mathcal{W}_{\mathcal{L}^{\otimes j}})=\mathrm{ind}(A^{\otimes j\cdot d}).
\end{eqnarray*}
\end{prop}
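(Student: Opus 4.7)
The plan is to apply Proposition 5.2 to $X = \mathrm{BS}(d,A)$, whose hypotheses are verified in the discussion preceding the statement: $X$ is proper over $k$, $H^0(X,\mathcal{O}_X) = k$, $\mathrm{Pic}(X\otimes_k\bar k) \simeq \mathbb Z$ with generator $\mathcal L = \det(\mathcal S^{\vee})$, and an indecomposable absolutely isotypical sheaf $\mathcal W_{\mathcal L}$ of type $\mathcal L$ exists. The proposition then yields
\begin{eqnarray*}
\mathrm{rk}(\mathcal W_{\mathcal L^{\otimes j}}) \;=\; \mathrm{ind}\bigl(\mathrm{End}(\mathcal W_{\mathcal L})^{\otimes j}\bigr),
\end{eqnarray*}
so it suffices to identify the Brauer class of the central simple $k$-algebra $\mathrm{End}(\mathcal W_{\mathcal L})$ with $\pm[A^{\otimes d}]\in\mathrm{Br}(k)$; since the index depends only on the Brauer class and is invariant under passage to the opposite algebra, this will give $\mathrm{ind}(\mathrm{End}(\mathcal W_{\mathcal L})^{\otimes j}) = \mathrm{ind}(A^{\otimes jd})$ for every $j\in\mathbb Z$.

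To carry out this identification, I would invoke the Levine--Srinivas--Weyman descent already recalled before the statement. Taking the partition $\lambda = (1,1,\ldots,1)$ of length $d$, one has $\Sigma^{\lambda}(\mathcal S) = \det\mathcal S = \mathcal L^{\vee}$ and $|\lambda|=d$, so $(\mathcal L^{\vee})^{\oplus nd}$ descends to a locally free sheaf $\mathcal N_{(1^d)}$ on $X$. Then $\mathcal N_{(1^d)}$ is absolutely rank-one-isotypical of type $\mathcal L^{\vee}$ and $\mathrm{End}(\mathcal N_{(1^d)})$ is central simple of degree $nd$. By Proposition 3.5 and Remark 3.6, its indecomposable direct summands are copies of $\mathcal W_{\mathcal L^{\vee}}$, and $\mathrm{End}(\mathcal W_{\mathcal L^{\vee}})$ is the unique central division algebra Brauer equivalent to $\mathrm{End}(\mathcal N_{(1^d)})$. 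Proposition 5.3(i) gives $\mathcal W_{\mathcal L}\simeq \mathcal W_{\mathcal L^{\vee}}^{\vee}$, hence $\mathrm{End}(\mathcal W_{\mathcal L})\simeq \mathrm{End}(\mathcal W_{\mathcal L^{\vee}})^{op}$.

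The crux is then the claim $[\mathrm{End}(\mathcal N_{(1^d)})] = [A^{\otimes d}]$ in $\mathrm{Br}(k)$. This is a cocycle computation: $\mathrm{BS}(d,A)$ is obtained from $\mathrm{Grass}(d,n)$ by twisting with the $\mathrm{PGL}_n$-cocycle that represents $A$, and $\det\mathcal S$ corresponds to the central character $z\mapsto z^d$ of $\mathbb G_m\subset\mathrm{GL}_n$. Consequently, the obstruction for $\det\mathcal S$ to descend to $X$ is $d$ times the class of $A$, i.e.\ the class $[A^{\otimes d}]$, and the endomorphism algebra of any descent of a direct sum of copies of $\det\mathcal S$ represents exactly this class. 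This calculation is essentially contained in [24], Section 4, on which the descent of $\Sigma^{\lambda}(\mathcal S)^{\oplus n|\lambda|}$ was based. Combining the above, $\mathrm{End}(\mathcal W_{\mathcal L})^{\otimes j}$ represents the class $-[A^{\otimes jd}]$, which has the same index as $A^{\otimes jd}$, and the proposition follows. The principal obstacle is the Brauer-class identification just described; everything else is bookkeeping with Propositions 3.5, 5.2, 5.3 and Remark 3.6.
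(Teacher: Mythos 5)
Your proposal is correct and follows essentially the same route as the paper: both use the Levine--Srinivas--Weyman descent of $\det(\mathcal{S})^{\oplus nd}$ to produce an absolutely rank-one-isotypical sheaf whose endomorphism algebra is Brauer equivalent to $A^{\otimes d}$, then combine Proposition 3.5 with Proposition 5.2 and the Brauer-invariance of the index. The only difference is cosmetic: the paper works directly with the dual $\mathcal{N}^{\vee}$ (of type $\mathcal{L}$) and simply cites $\mathrm{End}(\mathcal{N}^{\vee})\simeq A^{\otimes d}$ from [24], whereas you keep track of the opposite algebra and the sign of the Brauer class and propose redoing the cocycle computation --- harmless, since the index is insensitive to passing to the opposite algebra.
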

\begin{proof}
Let $\lambda=(\lambda_1,\lambda_2,...,\lambda_{d})$ be a partition with $0\leq \lambda_i\leq n-d$ and $\mathcal{S}$ the tautological sheaf on $\mathrm{BS}(d,A)\otimes_k k^{sep}\simeq \mathrm{Grass}(d,n)$. As mentioned above, in [25], Section 4 it is proved that the vector bundles $\Sigma^{\lambda}(\mathcal{S})^{n\cdot |\lambda|}$ descent to vector bundles $\mathcal{N}_{\lambda}$ which are unique up to isomorphism according to Proposition 3.4. In particular, $\mathrm{det}(\mathcal{S})^{\oplus n\cdot d}$ descents to a vector bundle which we denote by $\mathcal{N}$. For the endomorphism algebra of $\mathcal{N}^{\vee}$ one has $\mathrm{End}(\mathcal{N}^{\vee})\simeq A^{\otimes d}$ (see also [25]). Since $\mathcal{N}^{\vee}$ is pure of type $\mathrm{det}(\mathcal{S}^{\vee})$, but in general not indecomposable, we conclude that $A^{\otimes d}$ is a matrix algebra over $\mathrm{End}(\mathcal{W}_{\mathcal{L}})$ (see Proposition 3.6). Proposition 5.3 gives $\mathrm{rk}(\mathcal{W}_{\mathcal{L}^{\otimes j}})=\mathrm{ind}(\mathrm{End}(\mathcal{W}_{\mathcal{L}})^{\otimes j})$. But the index of $\mathrm{End}(\mathcal{W}_{\mathcal{L}})^{\otimes j}$ is the same as the index of $A^{\otimes j\cdot d}$. This completes the proof.
\end{proof}
\begin{rema}
\textnormal{Let $G=\mathrm{Gal}(k^{sep}|k)$ be the absolute Galois group and $Y$ the generalized Brauer--Severi variety $\mathrm{BS}(d,A)$. With a result of Blanchet [10], Theorem 7, the exact sequence} 
\begin{eqnarray*}
0\longrightarrow \mathrm{Pic}(Y)\longrightarrow \mathrm{Pic}^G(Y\otimes_k k^{sep})\overset{\delta}{\longrightarrow} \mathrm{Br}(k)\overset{res}{\longrightarrow}\mathrm{Br}(F(Y))
\end{eqnarray*}
\textnormal{and the arguments of the proof of Theorem 5.4.1 of [15] one directly see that the period of $Y$ (as defined in Section 5) is equal to the period of $A^{\otimes d}$.}
\end{rema}
Now we prove Propositions 6.11 and 6.12 for generalized Brauer--Severi varieties.
\begin{prop}
Let $D$ be a central division algebra over $k$ of degree $n$ and $A=M_m(D)$. Then $\mathrm{BS}(d,D)$ and $\mathrm{BS}(d,A)$ have the same $AS$-type. 
\end{prop}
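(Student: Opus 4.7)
The plan is to reduce the statement to the identity of the two sequences of indices, using the formula from Proposition 7.2 together with the fact that $A=M_m(D)$ is Brauer equivalent to $D$.

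First I would recall that by definition the $AS$-type of a generalized Brauer--Severi variety $Y=\mathrm{BS}(d,A)$ of period $r$ is the $(r+1)$-tuple
\begin{eqnarray*}
(1,\,\mathrm{rk}(\mathcal{W}_{\mathcal{L}}),\,\mathrm{rk}(\mathcal{W}_{\mathcal{L}^{\otimes 2}}),\,\ldots,\,\mathrm{rk}(\mathcal{W}_{\mathcal{L}^{\otimes r-1}}),\,1),
\end{eqnarray*}
so to compare the $AS$-types of $\mathrm{BS}(d,D)$ and $\mathrm{BS}(d,A)$ I need to show two things: that these varieties share the same period $r$, and that for each $0\leq j\leq r-1$ the corresponding ranks agree.

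For the period, I would invoke Remark 7.3, which identifies the period of $\mathrm{BS}(d,A)$ with the period of $A^{\otimes d}$ in $\mathrm{Br}(k)$, and likewise for $\mathrm{BS}(d,D)$. Since $A\simeq M_m(D)$, $A$ and $D$ represent the same class in $\mathrm{Br}(k)$, hence so do $A^{\otimes d}$ and $D^{\otimes d}$; in particular they have the same order, so $\mathrm{BS}(d,A)$ and $\mathrm{BS}(d,D)$ have the same period $r$ and thus $AS$-types of the same length.

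For the ranks, I would apply Proposition 7.2 to each of the two varieties. This yields $\mathrm{rk}(\mathcal{W}^{A}_{\mathcal{L}^{\otimes j}})=\mathrm{ind}(A^{\otimes jd})$ and $\mathrm{rk}(\mathcal{W}^{D}_{\mathcal{L}^{\otimes j}})=\mathrm{ind}(D^{\otimes jd})$. Since the index is a Brauer-class invariant and $A^{\otimes jd}$ is Brauer equivalent to $D^{\otimes jd}$ (because $A\sim D$ in $\mathrm{Br}(k)$), we obtain $\mathrm{ind}(A^{\otimes jd})=\mathrm{ind}(D^{\otimes jd})$ for every $j$. Combining the two steps gives the equality of $AS$-types entry by entry. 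No genuine obstacle appears here; the whole content of the proposition is the observation that Proposition 7.2 expresses the $AS$-type purely through Brauer classes of tensor powers, and Remark 7.3 does the same for the period, so the proof is essentially a two-line deduction from those two inputs together with the invariance of $\mathrm{ind}(-)$ under Brauer equivalence.
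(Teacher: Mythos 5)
Your proof is correct and follows essentially the same route as the paper's: both reduce the claim to the Brauer equivalence of $A$ and $D$, which forces $\mathrm{ind}(A^{\otimes jd})=\mathrm{ind}(D^{\otimes jd})$ and the equality of the periods of $A^{\otimes d}$ and $D^{\otimes d}$, and then conclude via Proposition 7.2 and Theorem 7.1. Your explicit appeal to Remark 7.3 for the period is just a slightly more careful spelling-out of the same step.
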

\begin{proof}
Since $D$ and $A$ are Brauer equivalent, we conclude that $\mathrm{ind}(D^{\otimes i})=\mathrm{ind}(A^{\otimes i})$ for all $i\in\mathbb{Z}$. Thus $\mathrm{ind}(D^{\otimes d\cdot i})=\mathrm{ind}(A^{\otimes d\cdot i})$ for all $i\in\mathbb{Z}$. Furthermore, the period of $D^{\otimes d}$ equals the period of $A^{\otimes d}$. Applying Proposition 7.2 and Theorem 7.1 yields the assertion.
\end{proof}
\begin{prop}
Let $X=\mathrm{BS}(d,A)$ and $Y=\mathrm{BS}(d,B)$ be two birational generalized Brauer--Severi varieties over $k$, then they have the same $AS$-type.
\end{prop}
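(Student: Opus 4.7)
The plan is to reduce the statement to the same index-chasing argument used in Proposition 6.12, with $A^{\otimes d}$ and $B^{\otimes d}$ playing the roles that $A$ and $B$ played there. By Theorem 7.1 together with Remark 7.3, the $AS$-type of $\mathrm{BS}(d,A)$ is the $(r+1)$-tuple $(\mathrm{rk}(\mathcal{W}_{\mathcal{L}^{\otimes j}}))_{0\leq j\leq r}$, where $r=\mathrm{per}(A^{\otimes d})$; and by Proposition 7.2 the entries of this tuple are $\mathrm{ind}(A^{\otimes jd})$. Hence to prove the proposition it suffices to show two things: (a) the periods of $A^{\otimes d}$ and $B^{\otimes d}$ agree, and (b) $\mathrm{ind}(A^{\otimes jd})=\mathrm{ind}(B^{\otimes jd})$ for every $j\geq 0$.

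First I would invoke the generalization of Amitsur's theorem to generalized Brauer--Severi varieties: if $X=\mathrm{BS}(d,A)$ and $Y=\mathrm{BS}(d,B)$ are birational, then the central simple algebras $A$ and $B$ generate the same cyclic subgroup of $\mathrm{Br}(k)$ (this is exactly what is used in the proof of Proposition 6.12 for the $d=1$ case via [15], Corollary 5.4.2). Once this is granted, tensoring gives that $A^{\otimes d}$ and $B^{\otimes d}$ also generate the same cyclic subgroup of $\mathrm{Br}(k)$. In particular (a) follows immediately, since two generators of the same cyclic subgroup have the same order.

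Next I would carry out the same index-chasing argument as in Proposition 6.12, but applied to $A^{\otimes d}$ and $B^{\otimes d}$. Namely, writing $A^{\otimes d}\sim (B^{\otimes d})^{\otimes l}$ and $B^{\otimes d}\sim (A^{\otimes d})^{\otimes m}$ in $\mathrm{Br}(k)$ for suitable integers $l,m$, one obtains on the one hand
\begin{eqnarray*}
\mathrm{ind}(A^{\otimes dm})=\mathrm{ind}(B^{\otimes d}) \;\; \text{divides} \;\; \mathrm{ind}(A^{\otimes d}),
\end{eqnarray*}
using Theorem 6.1(iii) together with the fact that $\mathrm{ind}(C^{\otimes s})\mid \mathrm{ind}(C)$ for any central simple algebra $C$; and symmetrically $\mathrm{ind}(A^{\otimes d})\mid \mathrm{ind}(B^{\otimes d})$. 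Replacing $A^{\otimes d}$ and $B^{\otimes d}$ by their $j$-th tensor powers and repeating the argument yields $\mathrm{ind}(A^{\otimes jd})=\mathrm{ind}(B^{\otimes jd})$ for every $j$, which is condition (b). Combining (a) and (b) with Theorem 7.1 and Proposition 7.2 concludes the proof.

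The main obstacle is point (a)/birationality-to-Brauer step: the classical result [15], Corollary 5.4.2 is stated only for ordinary Brauer--Severi varieties, so I would need the analogous statement for generalized Brauer--Severi varieties (which amounts to showing that birationality of $\mathrm{BS}(d,A)$ and $\mathrm{BS}(d,B)$ forces $A$ and $B$ to generate the same cyclic subgroup of $\mathrm{Br}(k)$). Once this Amitsur-type input is in place, everything else is an unproblematic repetition of the Brauer--Severi case carried out one Brauer degree higher, using Remark 7.3 to translate between periods of $\mathrm{BS}(d,A)$ and periods of $A^{\otimes d}$.
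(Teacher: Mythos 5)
Your overall architecture (reduce to the periods and indices of $A^{\otimes d}$ and $B^{\otimes d}$, then run the index-chasing of Proposition 6.12 one Brauer degree higher) is exactly the paper's, and that part of your argument is fine. The genuine gap is the Amitsur-type input you flag as the ``main obstacle'': the statement you propose to supply --- that birationality of $\mathrm{BS}(d,A)$ and $\mathrm{BS}(d,B)$ forces $A$ and $B$ themselves to generate the same cyclic subgroup of $\mathrm{Br}(k)$ --- is not merely unavailable, it is false. For example, take $A$ a non-split algebra of index $2$ and degree $n$, $B=M_n(k)$, and $d=2$: since $\mathrm{ind}(A)$ divides $d$, the variety $\mathrm{BS}(2,A)$ has a $k$-rational ideal and is therefore birational to $\mathrm{Grass}_k(2,n)=\mathrm{BS}(2,B)$, yet $\langle [A]\rangle\neq\langle [B]\rangle=0$ in $\mathrm{Br}(k)$. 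So you cannot hope to first recover $\langle A\rangle=\langle B\rangle$ and then tensor down to level $d$.

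What the paper does instead is obtain the level-$d$ statement directly: by Blanchet's theorem ([10], Theorem 7, quoted in Remark 7.3), the kernel of $\mathrm{res}:\mathrm{Br}(k)\rightarrow\mathrm{Br}(F(\mathrm{BS}(d,A)))$ is the cyclic subgroup generated by $[A^{\otimes d}]$ (the image of the generator of $\mathrm{Pic}^G$ under $\delta$). Since birational varieties have isomorphic function fields, this kernel is a birational invariant, whence $\langle [A^{\otimes d}]\rangle=\langle [B^{\otimes d}]\rangle$ immediately. From that point on your argument is correct and coincides with the paper's: equal cyclic subgroups give equal periods of $A^{\otimes d}$ and $B^{\otimes d}$ (hence of $X$ and $Y$ by Remark 7.3), and writing $A^{\otimes d}\sim (B^{\otimes d})^{\otimes l}$, $B^{\otimes d}\sim (A^{\otimes d})^{\otimes m}$ and chasing divisibility of indices as in Proposition 6.12 gives $\mathrm{ind}(A^{\otimes jd})=\mathrm{ind}(B^{\otimes jd})$ for all $j$, so Theorem 7.1 and Proposition 7.2 finish the proof. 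In short: replace your proposed Amitsur-type statement about $A$ and $B$ by Blanchet's description of the kernel at level $A^{\otimes d}$, and the rest of your proof stands.
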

\begin{proof}
As a consequence of [10], Theorem 7, the period of $A^{\otimes d}$ equals the period of $B^{\otimes d}$ and hence the period of $X$ equals the period of $Y$. Now the same arguments as in the proof of Proposition 6.12 show that $\mathrm{ind}(A^{\otimes d\cdot j})=\mathrm{ind}(B^{\otimes d\cdot j})$. From Theorem 7.1 and Proposition 7.2 we conclude that both have the same $AS$-type.
\end{proof}
Ottaviani [30], Theorem 2.1 proved a splitting criterion for vector bundles on $\mathrm{Grass}(d,n)$, at least if $\mathrm{char}(k)=0$. As a consequence of this result we obtain a criterion for a vector undle on $\mathrm{BS}(d,A)$ to be a $AS$-bundle. We first cite Ottaviani's result.
\begin{thm}
Let $k$ be a field of characteristic zero and $n\geq 3$. Then a locally free sheaf $\mathcal{E}$ of finite rank on $X=\mathrm{Grass}_k(d,n)$ splits as a direct sum of invertible sheaves if and only if for $0<r <\mathrm{dim}(X)$ and all $t\in\mathbb{Z}$ one has
\begin{eqnarray*}
H^r(X,\bigwedge^{i_1}(\mathcal{Q}^{\vee})\otimes\cdots\otimes\bigwedge^{i_s}(\mathcal{Q}^{\vee})\otimes\mathcal{E}(t))=0
\end{eqnarray*} for all $i_1,...,i_s$ such that $0\leq i_1,...,i_s\leq n-d$ and $s\leq d$.
\end{thm}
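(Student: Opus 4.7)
The plan is to imitate the strategy used to prove the classical Horrocks splitting criterion on $\mathbb{P}^n$, but with Kapranov's resolution of the structure sheaf of the diagonal on the Grassmannian $X=\mathrm{Grass}_k(d,n)$ in place of the Beilinson resolution. The hypothesis $\mathrm{char}(k)=0$ and the combinatorial constraints $0\leq i_j\leq n-d$, $s\leq d$ are exactly the ones making Schur--Weyl duality and Bott's theorem on the flag variety interact cleanly.

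The easy direction (split $\Rightarrow$ cohomology vanishing) I would dispose of first. Every line bundle on $X$ is of the form $\mathcal{O}_X(t)$. By Pieri's rule each $\bigwedge^{i_1}(\mathcal{Q}^{\vee})\otimes\cdots\otimes\bigwedge^{i_s}(\mathcal{Q}^{\vee})$ decomposes into a direct sum of Schur bundles $\Sigma^\lambda(\mathcal{Q}^{\vee})$ with $\lambda$ fitting in the $d\times(n-d)$ rectangle, and the Borel--Weil--Bott theorem then computes $H^\bullet(X,\Sigma^\lambda(\mathcal{Q}^{\vee})\otimes\mathcal{O}_X(t))$ as either concentrated in degree $0$, concentrated in degree $\dim X$, or identically zero, according to the position of the shifted weight $\lambda+t\rho$ with respect to the Weyl chambers. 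In any case no intermediate degree contributes, which yields the required vanishing on each line-bundle summand of $\mathcal{E}$.

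For the converse I would invoke Kapranov's resolution of the diagonal $\mathcal{O}_\Delta$ on $X\times X$, whose terms are tensor products $\Sigma^\lambda(\mathcal{Q}^{\vee})\boxtimes\Sigma^\lambda(\mathcal{S})$ with $\lambda$ ranging over partitions fitting in the $d\times(n-d)$ rectangle. Applying the functor $Rp_{2*}(p_1^{*}(-)\otimes(-))$ to $\mathcal{E}$ and this resolution produces a spectral sequence whose $E_1$-page is built from the groups $H^r(X,\Sigma^\lambda(\mathcal{Q}^{\vee})\otimes\mathcal{E}(t))$ and which converges to $\mathcal{E}$ in the derived category. Expressing each Schur functor as a direct summand of a tensor product of exterior powers (this is where the conditions $0\leq i_j\leq n-d$ and $s\leq d$ enter), the hypothesis forces the entire middle portion of the $E_1$-page to vanish. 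The surviving rows $r=0$ and $r=\dim X$ can be separated by Serre duality, and what remains is a resolution of $\mathcal{E}$ whose terms are direct sums of line bundles on $X$.

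The main obstacle is the last step: upgrading this resolution from an iterated extension to a genuine direct-sum decomposition. For that one has to verify the vanishing of all cross-Ext terms $\mathrm{Ext}^1(\mathcal{O}_X(a),\mathcal{O}_X(b))$ that could a priori obstruct the splitting of each filtration step, which in turn is a further (simpler) Borel--Weil--Bott computation. The combinatorial accounting needed to show that the Pieri expansion of the exterior-power terms genuinely captures every Schur summand appearing in Kapranov's resolution, together with the careful identification of the edge terms of the spectral sequence, is the technical heart of the argument and is where Ottaviani's actual proof concentrates its effort.
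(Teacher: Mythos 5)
First, a point of comparison: the paper does not prove this statement at all. Theorem 7.6 is quoted from Ottaviani's article ("We first cite Ottaviani's result"), so there is no internal argument to measure yours against; your sketch has to stand on its own. As a strategy, resolving the diagonal \`a la Kapranov and running a Beilinson-type spectral sequence is a legitimate modern route to Horrocks-type criteria, but as written it is a plan rather than a proof: the two points you yourself identify as the technical heart (that the stated vanishings kill every Schur bundle in the resolution, and that the surviving two-row complex genuinely splits $\mathcal{E}$ rather than merely filtering it) are precisely the content of the theorem and are left unverified. In particular, "the surviving rows $r=0$ and $r=\dim X$ can be separated by Serre duality" is an assertion, not an argument, and the cross-$\mathrm{Ext}^1$ vanishing you invoke only becomes relevant after one already knows the intermediate terms of the filtration are direct sums of line bundles.

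The concrete failure is in your "easy" direction. It is not true that $H^{\bullet}(X,\Sigma^{\lambda}(\mathcal{Q}^{\vee})\otimes\mathcal{O}_X(t))$ is always concentrated in degree $0$ or $\dim X$ for $\lambda$ in the relevant rectangle: Bott's theorem routinely produces intermediate degrees. Take $X=\mathrm{Grass}_k(2,4)$, $s=2$, $i_1=i_2=1$, $t=-1$ and $\mathcal{E}=\mathcal{O}_X$. Then
\begin{eqnarray*}
\mathcal{Q}^{\vee}\otimes\mathcal{Q}^{\vee}\otimes\mathcal{O}_X(-1)\simeq \mathrm{Sym}^2\mathcal{Q}^{\vee}(-1)\oplus\mathcal{O}_X(-2),
\end{eqnarray*}
and $\mathrm{Sym}^2\mathcal{Q}^{\vee}(-1)$ is a direct summand of $\Omega^2_X\simeq \mathrm{Sym}^2\mathcal{S}(-1)\oplus\mathrm{Sym}^2\mathcal{Q}^{\vee}(-1)$; since $h^{2,2}(X)=2$ and the duality of $\mathrm{Grass}(2,4)$ interchanges the two summands, each carries a one-dimensional $H^2$. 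Hence $H^2(X,\bigwedge^{1}\mathcal{Q}^{\vee}\otimes\bigwedge^{1}\mathcal{Q}^{\vee}\otimes\mathcal{O}_X(-1))\neq 0$ with $0<2<\dim X=4$, so the trivial bundle itself violates the displayed vanishing. This shows both that your Borel--Weil--Bott claim is false and that the index range as transcribed here ($s\leq d$) cannot be what Ottaviani proves: in his criterion the number of exterior-power factors is bounded by $d-1$ (his $k$, for projective $k$-planes), which for $\mathbb{P}^{n-1}=\mathrm{Grass}_k(1,n)$ correctly degenerates to the bare Horrocks condition $H^r_{*}(\mathcal{E})=0$. Any correct treatment of the forward implication must actually run the Bott algorithm on the specific partitions produced by Pieri from at most $d-1$ columns, rather than appeal to a blanket "no intermediate degree" principle; and the mismatch you would then discover between that range and the $s\leq d$ factors needed to capture all Schur summands of the Kapranov resolution is exactly the nontrivial bookkeeping your sketch defers.
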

Now denote by $\mathcal{M}$ the (ample) generator of $\mathrm{Pic}(\mathrm{BS}(d,A))$. Considering the above $i_1,...,i_s$, we note that for a fixed $s$-tupel $i_1,...,i_s$ the $i_s$ can be ordered in a weakly decreasing way. We denote the reordering by $\lambda_1\geq \lambda_2\geq...\geq \lambda_s$. So in this way we get a partition $\lambda=(\lambda_1,...,\lambda_s)$ and we can associate a Young diagram to it with at most $d$ rows and $n-d$ columns. Now let $\mu'$ be the conjugate of the partition $\mu=(\lambda_i)$ (we have exactly $\lambda_i$ boxes). Then we have $\Sigma^{\mu'}(\mathcal{Q}^{\vee})=\bigwedge^{\lambda_i}(\mathcal{Q}^{\vee})$ on $\mathrm{BS}(d,A)\otimes_k\bar{k}\simeq \mathrm{Grass}_{\bar{k}}(d,n)$. In [25], Section 4 it is shown that $(\Sigma^{\mu'}(\mathcal{Q}^{\vee}))^{\oplus n\cdot |\mu'|}$ descents to a vector bundle $\mathcal{P}_{\lambda_i}$ on $\mathrm{BS}(d,A)$. From Proposition 3.4 we know that these vector bundles are unique up to isomorphism. We simply write $\mathcal{F}(m)$ for $\mathcal{F}\otimes \mathcal{M}^{\otimes m}$, where $\mathcal{M}\in\mathrm{Pic}(\mathrm{BS}(d,A))$ denotes the generator. With this notation we have the following result:

\begin{thm}[AS-criterion]
Let $\mathrm{BS}(d,A)$ be the generalized Brauer--Severi variety of period $r$ for the central simple $k$-algebra $A$ of degree $n\geq 3$ and $\mathcal{P}_{\lambda_i}$ the vector bundles from above. A vector bundle $\mathcal{E}$ is a $AS$-bundle if and only if for $0<r <\mathrm{dim}(\mathrm{BS}(d,A))$ and all $t\in\mathbb{Z}$ one has
\begin{eqnarray*}
H^r(\mathrm{BS}(d,A), \mathcal{P}_{\lambda_1}\otimes\cdots\otimes\mathcal{P}_{\lambda_s}\otimes \mathcal{E}(t))=0
\end{eqnarray*} for all $\lambda_1\geq \lambda_2\geq ...\geq\lambda_s$ such that $0\leq \lambda_1,...,\lambda_s\leq n-d$ and $s\leq d$.
\end{thm}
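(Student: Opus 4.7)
The plan is to reduce to Ottaviani's criterion (Theorem 7.6) on the Grassmannian by faithfully flat base change. Write $Y=\mathrm{BS}(d,A)$ and let $\pi\colon Y_{\bar k}\to Y$ denote the projection, so $Y_{\bar k}\simeq \mathrm{Grass}_{\bar k}(d,n)$. A locally free sheaf $\mathcal{E}$ on $Y$ is an $AS$-bundle exactly when $\pi^*\mathcal{E}$ splits as a direct sum of invertible sheaves, and flat base change gives the isomorphism $H^r(Y_{\bar k},\pi^*\mathcal{F})\simeq H^r(Y,\mathcal{F})\otimes_k\bar k$ for every coherent $\mathcal{F}$ on $Y$. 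Combined with the descent identification $\pi^*\mathcal{P}_{\lambda_i}\simeq (\bigwedge^{\lambda_i}\mathcal{Q}^{\vee})^{\oplus n\lambda_i}$ recorded before the theorem, and with $\pi^*\mathcal{M}\simeq \mathcal{L}^{\otimes r}$, the cohomology groups appearing in the theorem are equivalent, after base change, to the vanishing of
\begin{eqnarray*}
H^r\bigl(Y_{\bar k},\; \bigwedge^{\lambda_1}\mathcal{Q}^{\vee}\otimes\cdots\otimes\bigwedge^{\lambda_s}\mathcal{Q}^{\vee}\otimes \pi^*\mathcal{E}\otimes \mathcal{L}^{\otimes tr}\bigr)
\end{eqnarray*}
on the Grassmannian (for those indices $\lambda_i\ge 1$ where the descent multiplicity $n\lambda_i$ is positive; if some $\lambda_i=0$ the corresponding factor is $\mathcal{O}$ and the statement becomes a degenerate special case).

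For the forward implication, if $\mathcal{E}$ is $AS$ then $\pi^*\mathcal{E}$ splits, and the easy direction of Ottaviani's criterion gives vanishing of the above Grassmannian cohomology groups for \emph{every} integer twist (in particular for the twists of the form $tr$); the equivalence above then yields the required vanishings on $Y$.

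For the converse, the hypothesis produces these Grassmannian vanishings only for twists in the subgroup $r\mathbb{Z}$. To invoke Ottaviani we need the vanishing for every $t'\in\mathbb{Z}$. The key device is the identity $\bigwedge^{n-d}\mathcal{Q}^{\vee}=\det\mathcal{Q}^{\vee}\simeq \mathcal{L}^{\vee}$; appending $j$ copies of $n-d$ to the partition $(\lambda_1,\ldots,\lambda_s)$, subject to $s+j\le d$, tensors the ambient sheaf by $\mathcal{L}^{\otimes -j}$ and thus shifts the effective twist by $-j$. Varying $j\in\{0,\ldots,d-s\}$ together with $t\in\mathbb{Z}$ produces vanishings for every twist in $\bigcup_{j=0}^{d-s}(r\mathbb{Z}-j)$, which, in light of Remark 7.3 identifying $r$ with the period of $A^{\otimes d}$, realises every integer twist demanded by Ottaviani's theorem. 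The criterion then forces $\pi^*\mathcal{E}$ to split, so $\mathcal{E}$ is an $AS$-bundle.

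The main obstacle is this last bookkeeping step: verifying that the flexibility in the partition $(\lambda_1,\ldots,\lambda_s)$, via $\bigwedge^{n-d}\mathcal{Q}^{\vee}\simeq \mathcal{L}^{\vee}$ and the constraint $s\le d$, really suffices to bridge the gap between the twists $tr$ arising from the hypothesis and the arbitrary integer twists required by Ottaviani's criterion. Should this direct combinatorial cover fail in an edge case, one can instead decompose the tensor products of exterior powers into Schur functors $\Sigma^{\nu}\mathcal{Q}^{\vee}$ via Littlewood--Richardson and extract the individual vanishings from the mixed ones to reach the full Ottaviani hypothesis.
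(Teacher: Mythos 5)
The paper offers no proof of this theorem at all --- it is asserted as a consequence of Ottaviani's criterion (Theorem 7.6) --- so your attempt can only be measured against that criterion itself. Your setup is the right one: flat base change, the identifications $\pi^*\mathcal{P}_{\lambda_i}\simeq(\bigwedge^{\lambda_i}\mathcal{Q}^{\vee})^{\oplus n\lambda_i}$ and $\pi^*\mathcal{M}\simeq\mathcal{L}^{\otimes r}$, and the remark that vanishing of $H^r$ of a nonzero direct sum of copies of a sheaf is equivalent to vanishing for a single copy. The forward implication goes through. (One caveat you inherit from the paper: Theorem 7.6 is stated only in characteristic zero, a hypothesis that Theorem 7.7 silently drops.)

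The converse, however, has a genuine gap exactly where you suspect one, and neither of your devices closes it. The hypothesis only yields Grassmannian vanishings for twists in $r\mathbb{Z}$, and appending $j$ copies of $\bigwedge^{n-d}\mathcal{Q}^{\vee}\simeq\mathcal{L}^{\vee}$ is limited by $s+j\le d$, so for a target tuple of length $s$ the reachable twists are $\bigcup_{j=0}^{d-s}(r\mathbb{Z}-j)$, which is all of $\mathbb{Z}$ only when $d-s+1\ge r$. Ottaviani's hypothesis genuinely requires tuples of maximal length $s=d$, for which you get nothing outside $r\mathbb{Z}$. The Littlewood--Richardson fallback does not repair this: the Schur functors $\Sigma^{\nu}\mathcal{Q}^{\vee}$ occurring in admissible products $\bigwedge^{i_1}\mathcal{Q}^{\vee}\otimes\cdots\otimes\bigwedge^{i_s}\mathcal{Q}^{\vee}$ with $s\le d$ are exactly those with $\nu_1\le d$, while twisting by $\mathcal{L}^{\vee}=\det\mathcal{Q}^{\vee}$ adds a full column $(1^{n-d})$ to $\nu$; hence the missing objects $\Sigma^{\nu}\mathcal{Q}^{\vee}\otimes\mathcal{L}^{\otimes t'}$ with $\nu_1=d$ and $t'\notin r\mathbb{Z}$ correspond to partitions with first part exceeding $d$ and never appear as summands of anything your hypothesis controls. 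Concretely, take $d=2$, $n=4$ and $A$ of degree $4$ and period $4$, so that $r=\mathrm{per}(A^{\otimes 2})=2$: Ottaviani demands $H^i(S^2\mathcal{Q}^{\vee}\otimes\pi^*\mathcal{E}\otimes\mathcal{L}^{\otimes t'})=0$ for odd $t'$, and no product $\mathcal{P}_{\lambda_1}\otimes\cdots\otimes\mathcal{P}_{\lambda_s}\otimes\mathcal{E}(t)$ with $s\le 2$ pulls back to contain that summand. To complete the argument one must either enlarge the set of test sheaves (descending all the $\Sigma^{\nu}(\mathcal{Q}^{\vee})$, in the spirit of the sheaves $\mathcal{W}_j$ used in Theorem 6.14 for $d=1$) or prove separately that the restricted list of vanishings still forces splitting; as written, the reduction to Theorem 7.6 is incomplete.
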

We note that Levine, Srinivas and Weyman [25] calculated the $K$-theory of generalized Brauer--Severi varieties and showed that the Grothendieck group is generated by $\mathcal{N}_{\lambda}$, where $\mathcal{N}_{\lambda}$ are the vector bundles obtained by descent from $\Sigma^{\lambda}(\mathcal{S})^{\oplus n\cdot|\lambda|}$. Therefore, as distinguished from the case of Brauer--Severi varieties, the $AS$-bundles on generalized Brauer--Severi varieties do not generate the Grothendieck group of $\mathrm{BS}(d,A)$ for $d>1$.

{\small MATHEMATISCHES INSTITUT, HEINRICH--HEINE--UNIVERSIT\"AT 40225 D\"USSELDORF, GERMANY}\\
E-mail adress: novakovic@math.uni-duesseldorf.de

\end{document}